\documentclass [11pt,reqno]{amsart}
\usepackage{amsmath,amssymb,verbatim,color}
\usepackage[all]{xy}

\usepackage[tmargin=1in, bmargin=1in, rmargin=1in, lmargin=1in]{geometry}
\usepackage{mathrsfs}

\usepackage[backref,pagebackref,hyperindex]{hyperref}
\usepackage{graphicx}

\def\XXint#1#2#3{{\setbox0=\hbox{$#1{#2#3}{\int}$ }
\vcenter{\hbox{$#2#3$ }}\kern-.6\wd0}}

\newcommand{\C}{\mathbb{C}}

 \newcommand{\R}{\mathbb{R}}
 
\newcommand{\T}{\mathbb{T}}

\newcommand{\ft}{\mathfrak{t}}

\newcommand{\cA}{\mathcal{A}}

\newcommand{\cE}{\mathcal{E}}

\newcommand{\cH}{\mathcal{H}}

\newcommand{\cM}{\mathcal{M}}

\newcommand{\tmom}{\widetilde{\mathcal{PM}}_{\Omega}}

\newcommand{\f}{\varphi}

\newcommand{\om}{\omega}

\newcommand{\p}{\psi}

\newcommand{\cET}{\cE^{1,T}}

\newcommand{\enR}{\mathrm R}

\DeclareMathOperator{\ent}{H}

\DeclareMathOperator{\Aut}{Aut}

\DeclareMathOperator{\Ent}{Ent}

\DeclareMathOperator{\MA}{MA}

\DeclareMathOperator{\PSH}{PSH}

\DeclareMathOperator{\tr}{tr}

\DeclareMathOperator{\Ric}{Ric}

\DeclareMathOperator{\dd}{{d}}

\DeclareMathOperator{\enE}{\mathrm E}

\newcommand{\ddc}{dd^c}
\newcommand{\dc}{d^c}
\newcommand{\de}{d}

\newcommand{\dbar}{\overline{\partial}}

\numberwithin{equation}{section}       

\newtheorem{prop} {Proposition} [section]
\newtheorem{thm}[prop] {Theorem} 
\newtheorem{defi}[prop] {Definition}
\newtheorem{lemma}[prop] {Lemma}
\newtheorem{cor}[prop]{Corollary}
\newtheorem{prop-def}[prop]{Proposition-Definition}

\newtheorem{mainthm}{Theorem}

\newtheorem{remark}[prop]{Remark}

\newtheorem{conj}[prop]{Conjecture}
\theoremstyle{remark}

\title{On Weighted Twisted K-Energy and Its Applications}
\author{Xia Xiao}
\date{\today}

\begin{document}

\begin{abstract}
We establish the convexity of the weighted twisted Mabuchi K-energy functional along geodesics in the finite-energy space $\mathcal{E}^{1,T}(X,\omega)$, covering the case of divisors with mixed cusp and conic singularities. We then prove that coercivity (relative to the complex torus) of this functional is an open condition under cone angle perturbations. This is obtained from a general result of independent interest, which shows the stability of coercivity under perturbations by certain twist currents. In particular, this yields openness of coercivity for conic K-energy functionals and proves that coercivity at the cusp limit implies coercivity of the corresponding conic K-energy functionals for all sufficiently small cone angles.
\end{abstract}
\maketitle

\section{Introduction}
Finding canonical K\"ahler metrics is a central theme in complex geometry. In higher dimensions, constant scalar curvature K\"ahler (cscK) metrics provide an analogue of Poincar\'e's uniformization theorem; see the surveys \cite{Boucksom_2018, Donaldson_2018_2} for background and further references. K\"ahler--Einstein metrics form an especially prominent subclass and have driven many breakthroughs, from the solution of the Calabi conjecture to the proof of the Yau--Tian--Donaldson (YTD) conjecture for Fano manifolds. For general polarizations $(X,L)$, recent variational advancements \cite{Darvas_Rubinstein_2017, BDL, CC1, CC2} relate cscK existence to coercivity of the Mabuchi K-energy. Li's work \cite{Li22} further connects the variational picture to stability through geodesic rays and model filtrations, and these ideas have recently led to a variational, Yau--Tian--Donaldson-type correspondence in a number of settings; see \cite{DarvasZhang2025, BoucksomJonsson2025}.

Fundamental to these existence and uniqueness results is the convexity of the K-energy. Initially established by Berman--Berndtsson \cite{BB} for $C^{1,1}$-geodesics, this property was extended by Berman--Darvas--Lu \cite{BDL} to singular potentials in finite energy spaces using pluripotential theory. While recent literature has introduced \emph{weighted} cscK metrics \cite{Lah, AJL}, a comprehensive variational theory---specifically regarding K-energy convexity for metrics that are simultaneously weighted and twisted in the presence of singularities---remains underdeveloped.

We establish convexity of weighted twisted K-energy functionals along geodesics in the space of finite energy potentials $(\cET(X,\omega), d_1)$ invariant under a fixed compact torus $T$. Our framework simultaneously incorporates weight functions and twisting by positive (1,1)-currents satisfying a natural decomposition condition. This result provides a unified foundation for variational approaches to weighted cscK metrics with general twist, extending the scope of the analytic Yau--Tian--Donaldson correspondence program.

\textbf{Setup and notation.}
Let $X$ be a smooth compact complex $n$-dimensional manifold and let $(X, \omega)$ be a compact K\"ahler manifold with a fixed reference K\"ahler metric $\omega$. Let $T \subset \Aut_{red}(X)$ denote a fixed compact torus in the \emph{reduced} group $\Aut_{red}(X)$ of automorphisms of $X$, i.e., the connected subgroup of automorphisms of $X$ generated by the Lie algebra of real holomorphic vector fields with zeros (see e.g.~\cite{gauduchon-book}). Consider a $T$-invariant positive $(1,1)$-current $\chi$ satisfying the following decomposition condition:
\begin{equation}\label{eq: ChiProp}
\chi = \beta + \frac{1}{2}  \ddc  f+\ddc \hat{f}, \textup{ where } e^{-f} \in L^{1}(X,\om^n)\footnote{The factor $1/2$ appearing here is consistent with our convention for the weighted Ricci curvature $\Ric_{\mathrm{v}}$ (see Definition \ref{def:equivariant-ricci}). In the conventions of \cite{BDL}, our twisted current $\chi$ is equivalent to $2\chi$ in their notations.},
\end{equation}
where $\beta$ is a smooth $(1,1)$-form, $f$ is an upper semicontinuous function, and $\hat{f} \in \cET(X, \omega)$. We assume $T \subset \Aut_{red}(X, \chi)$, where $\Aut_{red}(X,\chi)$ denotes the subgroup of $\Aut_{red}(X)$ preserving $\chi$ (cf.~\cite{Fuj}).

It is well known that $T$ acts in a Hamiltonian way with respect to any $T$-invariant K\"ahler metric in the K\"ahler class $[\omega]$, and the corresponding moment map $m_{\omega_\varphi}$ (for $\omega_\varphi = \omega + \ddc \varphi$) sends $X$ onto a compact convex polytope $P_{X,T} \subset \ft^\vee$ in the dual vector space $\ft^\vee$ of the Lie algebra $\ft$ of $T$ (cf.~\cite{Atiyah,GS}). Furthermore, up to translations, $P_{X,T}$ is independent of the choice of K\"ahler potential $\varphi$. Since this polytope depends on the choice of torus $T$, we denote it by $P_{X,T}$ and, once $T$ is fixed, simply by $P_X$. We shall fix this polytope $P_X$, giving rise to a normalization of the corresponding moment maps $\{m_{\omega_\varphi}\}$.

Using Lahdili's weighted scalar curvature formalism as the smooth weighted input, we consider its twisted finite-energy version. Let $\mathrm{v},\mathrm{w}\in C^\infty(\ft^\vee)$ be smooth functions with $\mathrm{v}>0$. We consider the \emph{weighted twisted scalar curvature equation}
\begin{equation}\label{eq:wtscK}
S_{\mathrm{v}}^{\chi}(\omega_\varphi) = \mathrm{w}(m_{\omega_\varphi}),
\end{equation}
where $S_{\mathrm{v}}^{\chi}$ denotes the weighted twisted scalar curvature (see Section~\ref{sec:wen} for the precise definition). The associated Euler-Lagrange functional is the \emph{weighted twisted Mabuchi functional} $\cM_{\mathrm{v},\mathrm{w}}^{\chi}$, which admits a natural decomposition into three components: the weighted entropy term $\ent_{\mathrm{v}}$, the twisted Ricci energy term $\enR_{\mathrm{v}}^{\chi}$, and the weighted energy term $\enE_{\mathrm{v}\,\mathrm{w}}$. Here $\cET(X, \omega)$ denotes the space of $T$-invariant finite energy potentials, which is the metric completion of the space of smooth $T$-invariant K\"ahler potentials with respect to the $d_1$-metric.

To clarify the scope of our framework, we note that it unifies several distinct geometric problems within a single variational setting:
\begin{itemize}
    \item \emph{Constant Scalar Curvature K\"ahler (cscK) metrics}: Setting $\mathrm{v}=1, \chi=0$ and $\mathrm{w}$ to be a constant recovers the classical cscK problem.
    
    \item \emph{Extremal K\"ahler metrics}: The choice $\mathrm{v}=1, \chi=0$ and $\mathrm{w}=\ell$, where $\ell$ is an affine-linear function on $\ft^\vee$, corresponds to the problem of extremal K\"ahler metrics. Our framework extends this to the singular setting, covering both conic and Poincar\'e-type extremal metrics (characterized by the extremal function $\ell^{\mathrm{ext}}$ discussed in Section~\ref{subsec:futaki}).
    
    \item \emph{Generalized K\"ahler--Ricci solitons ($\mu$-cscK)}: Let $\ell$ be an affine-linear function on $\ft^\vee$ and $a \in \R$. The choice $\mathrm{v}= e^{\ell}$ and $\mathrm{w}= 2(\ell + a) e^{\ell}$ corresponds to the $\mu$-cscK metrics introduced in~\cite{Ino}. When $\chi=0$, this generalizes the theory of K\"ahler--Ricci solitons on Fano manifolds~\cite{TZ}. In our twisted setting where $\chi$ represents a divisor with varying cone angles, this setup captures $\mu$-cscK metrics with cone singularities.
    
    \item \emph{Scalar-flat metrics on line bundles}: Let $n = \dim_\C X$ and let $\ell$ be a positive affine-linear function on $P_X$. Setting $\mathrm{v}=\ell^{-n-1}$ and $\mathrm{w}=a \ell^{-n-2}$ relates to the construction of scalar-flat K\"ahler metrics on the total space of a polarization $L$ (see~\cite{AC,ACL}). Our framework allows for the inclusion of a twisted term $\chi$, encompassing the case of scalar-flat metrics with cone singularities along divisors.
\end{itemize}

\medskip\noindent
\textbf{Main results.}
Our first main result establishes convexity of weighted twisted K-energy functionals along weak geodesics in the space of finite energy potentials.

\begin{mainthm}[Convexity of weighted twisted K-energy, see Theorem \ref{thm:main}]\label{thm:main0}
Let $(X, \omega)$ be a compact K\"ahler manifold, $T \subset \Aut_{red}(X,\chi)$ a compact torus, and $\chi$ a $T$-invariant positive $(1,1)$-current satisfying \eqref{eq: ChiProp}. For smooth weights $\mathrm{v},\mathrm{w}\in C^\infty(\mathfrak{t}^\vee)$ with $\mathrm{v}>0$ on the moment polytope $P_X$, the weighted twisted Mabuchi functional
\begin{equation}
\label{KEnergyDef}
\cM_{\mathrm{v},\mathrm{w}}^{\chi}(\varphi):=
\ent_{\mathrm{v}}(\varphi)
+
\enR_{\mathrm{v}}^{\chi}(\varphi)+\enE_{\mathrm{v}\,\mathrm{w}}(\varphi)
\end{equation}
admits a unique greatest lower semicontinuous extension $\cM_{\mathrm{v},\mathrm{w}}^{\chi}: \cET(X, \omega) \to \R\cup \{+\infty\}$, that is moreover convex and continuous along weak geodesics.
\end{mainthm}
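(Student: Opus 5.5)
The plan follows the Berman--Darvas--Lu strategy \cite{BDL}, adapted to weights as in the weighted works \cite{Lah, AJL} and extended to the twist $\chi$ satisfying \eqref{eq: ChiProp}. On $\cET(X,\omega)$ I would write each piece of $\cM_{\mathrm{v},\mathrm{w}}^{\chi}$ explicitly. The entropy becomes
\[
\ent_{\mathrm{v}}(\varphi)=\int_X \log\Big(\frac{\mathrm{v}(m_{\omega_\varphi})\,\omega_\varphi^n}{\omega^n}\Big)\,\mathrm{v}(m_{\omega_\varphi})\,\omega_\varphi^n
\]
(normalized). The weighted energy $\enE_{\mathrm{v}\,\mathrm{w}}$ and the Ricci-type terms are defined through weighted Monge--Amp\`ere measures $\MA_{\mathrm{v}}(\varphi)=\mathrm{v}(m_{\omega_\varphi})\,\omega_\varphi^n$ and their mixed analogues. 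These extend to $\cET$ through the $T$-equivariant pluripotential theory of weighted measures, which should be available from Section~\ref{sec:wen}.

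I would treat the twist using the decomposition $\chi=\beta+\frac12\ddc f+\ddc\hat f$. The smooth part $\beta$ contributes a weighted mixed energy $\enE^{\beta}_{\mathrm{v}}$. This energy is $d_1$-continuous on $\cET$ and, up to a term linear along geodesics, controlled by $\enE$. The $\ddc\hat f$ part integrates by parts into $\int_X \hat f\,(\MA_{\mathrm{v}}(\varphi)-\MA_{\mathrm{v}}(0))$ plus weighted energies involving $\hat f\in\cET$. The $f$ part I would absorb into the entropy: the terms $\ent_{\mathrm{v}}+\frac12\int f\,\MA_{\mathrm{v}}$ combine into a relative entropy with respect to the finite measure $\mu=e^{-f}\omega^n$, up to normalization. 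This uses $e^{-f}\in L^1$, and the factor $\frac12$ is matched to the convention for $\Ric_{\mathrm{v}}$. Hence
\[
\cM_{\mathrm{v},\mathrm{w}}^{\chi}=\Ent_{\mu}(\MA_{\mathrm{v}}(\cdot))+\text{(energy terms)},
\]
where the energy terms are $d_1$-continuous. Since relative entropy is lower semicontinuous under weak convergence of measures, and $\MA_{\mathrm{v}}$ is $d_1$-continuous on $\cET$, this formula defines a lower semicontinuous extension. I would show it is the \emph{greatest} one by an entropy-approximation argument as in \cite[Thm.~1.2]{BDL}. Given $\varphi$ with finite entropy, one solves $T$-invariant weighted Monge--Amp\`ere equations with smoothed densities, to obtain smooth $\varphi_j\to\varphi$ in $d_1$ with $\ent(\varphi_j)\to\ent(\varphi)$. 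The weighted Monge--Amp\`ere solvability would come from the Han--Li type results for weighted measures (or a $T$-equivariant variational method). Uniqueness of the greatest lsc extension is then automatic.

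For convexity I would argue by approximation. If $f$ and $\hat f$ are smooth, the functional is a twisted weighted Mabuchi functional with a smooth semipositive twist. For these, convexity along $C^{1,1}$ (and then finite-energy) geodesics follows from the weighted Berndtsson-type convexity of the entropy part, proved by Lahdili / Han--Li via positivity of direct images. The twist contributes a term linear along geodesics plus $\int \psi\,\MA_{\mathrm{v}}$ for a $\chi$-psh potential $\psi$, which is convex along geodesics because $\chi\ge0$. In the general case I would approximate $\chi$ by $\chi_k=\beta+\frac12\ddc f_k+\ddc\hat f_k$ with $f_k\searrow f$ and $\hat f_k\searrow\hat f$ (Demailly regularization, made $T$-invariant by averaging). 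Positivity is lost only up to $\varepsilon_k\omega$, and this is harmless because the $\omega$-twist term is affine modulo $\enE$. Monotone convergence gives $\cM^{\chi_k}\to\cM^{\chi}$ pointwise, so convexity passes to the limit. Continuity along weak geodesics then follows from convexity together with lower semicontinuity at the endpoints, since a convex function on $[0,1]$ that is lsc at the endpoints is continuous there.

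The main obstacle is the interplay between the weight $\mathrm{v}(m_{\omega_\varphi})$ and singular geodesics. Convexity of the weighted entropy along finite-energy geodesics needs the weighted moment maps and the measures $\MA_{\mathrm{v}}$ to be defined and continuous for $\cET$ potentials, and needs the BDL endpoint-approximation argument to go through. I would handle this by working on the product $X\times\text{annulus}$ and using a $T$-equivariant Berndtsson positivity with weights, citing \cite{AJL} for the smooth weighted statements.
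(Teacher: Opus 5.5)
\textbf{Extension: same as the paper.} Your construction of the extension matches the paper's. You absorb $\frac12\ddc f$ into the reference measure $e^{-f}\omega^n$, use continuity of the $\beta$- and $\hat f$-terms, and build recovery sequences from weighted Monge--Amp\`ere equations with smoothed densities.

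\textbf{Convexity: a genuine gap in the order of limits.} You prove convexity for smooth twists $\chi_k$ on finite-energy geodesics. You then claim $\cM^{\chi_k}_{\mathrm v,\mathrm w}\to\cM^{\chi}_{\mathrm v,\mathrm w}$ pointwise on $\cET(X,\omega)$ by monotone convergence, and this claim is false. For smooth $f_k$, $\cM^{\chi_k}_{\mathrm v,\mathrm w}(\varphi)<\infty$ exactly when $\Ent(\MA_{\mathrm v}(\varphi)\mid\omega^n)<\infty$. By contrast, $\cM^{\chi}_{\mathrm v,\mathrm w}(\varphi)<\infty$ exactly when $\Ent(\MA_{\mathrm v}(\varphi)\mid e^{-f}\omega^n)<\infty$, and in the main application this set is strictly larger. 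For a Poincar\'e-type potential, $\MA_{\mathrm v}(\varphi)$ is comparable to $\bigl(|s_D|_h^2\log^2|s_D|_h^2\bigr)^{-1}\omega^n$. Its entropy relative to $\omega^n$ diverges, since radially the integrand is of order $1/(r|\log r|)$. Hence $\cM^{\chi_k}_{\mathrm v,\mathrm w}(\varphi)=+\infty$ for every $k$, although $\cM^{\chi}_{\mathrm v,\mathrm w}(\varphi)<\infty$. On geodesics with such endpoints the convexity inequalities for $\chi_k$ say nothing, and monotone convergence cannot resolve the $\infty-\infty$ in $\ent_{\mathrm v}+\frac12\int f\,\MA_{\mathrm v}$.

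Two smaller points also need correcting:
\begin{itemize}
\item $f$ is only usc with $e^{-f}\in L^1$, not quasi-psh; in the cusp case $\ddc\log(\lambda-\log|s_D|_h^2)$ is unbounded below. Regularizing $f$ and $\hat f$ separately therefore gives no bound $\chi_k\geq-\varepsilon_k\omega$. You must instead regularize the $\beta$-psh function $\frac12 f+\hat f$.
\item $\enE^{\omega}_{\mathrm v}$ is not affine modulo $\enE$ along geodesics: its $\ddc$ in the annulus variable is $\int_X\mathrm v(m_\Psi)\pi_X^*\omega\wedge(\pi_X^*\omega+\ddc\Psi)^{[n]}\geq 0$. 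The error is harmless only because $\varepsilon_k\enE^{\omega}_{\mathrm v}(\varphi_t)\to 0$.
\end{itemize}

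\textbf{The repair: reverse the order, as the paper does.} First handle the twist only along $C^{1,1}$ weak geodesics with smooth endpoints. There $\MA_{\mathrm v}(\varphi_t)$ has bounded density, so $\ent_{\mathrm v}(\varphi_t)$, $\int f\,\MA_{\mathrm v}(\varphi_t)$ and $\int \hat f\,\MA_{\mathrm v}(\varphi_t)$ are all finite (recall $f^-\leq e^{-f}$). Consequently $\cM^{\chi}_{\mathrm v,\mathrm w}=\cM_{\mathrm v,\mathrm w}+\enE^{\chi}_{\mathrm v}$ holds termwise.

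The paper does not even regularize the twist inside the functional. It takes convexity of $\cM_{\mathrm v,\mathrm w}$ from \cite{AJL}. It then shows directly, for the singular positive $\chi$, that $\ddc\enE^{\chi}_{\mathrm v}(\Psi)=\int_X\mathrm v(m_\Psi)\pi_X^*\chi\wedge(\pi_X^*\omega+\ddc\Psi)^{[n]}\geq 0$. The $\mathrm v'$-term drops out because $(\pi_X^*\omega+\ddc\Psi)^{n+1}=0$. Your regularization of $\frac12 f+\hat f$ would also work at this level.

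Only afterwards should you pass to arbitrary finite-energy geodesics:
\begin{itemize}
\item approximate the endpoints by the recovery sequences from the extension step, so that $\cM^{\chi}_{\mathrm v,\mathrm w}$ converges at the endpoints;
\item join the approximating endpoints by weak geodesics;
\item use \cite[Prop.~4.3]{BDL} to get $d_1$-convergence at each $t$;
\item conclude by lower semicontinuity.
\end{itemize}
You already have every ingredient; they just have to be applied in this order.
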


Theorem \ref{thm:main} extends the finite-energy convexity theory of Berman--Darvas--Lu to the weighted twisted setting. More precisely, it generalizes the smooth weighted K-energy convexity framework of Apostolov--Jubert--Lahdili by allowing singular positive twists satisfying \eqref{eq: ChiProp} and by proving convexity on the finite-energy space $\cET(X,\omega)$. An important special case occurs when $\chi=2\pi[D]$ is the current of integration along a simple normal crossing divisor $D=\sum_{i=1}^k D_i + \sum_{j=k+1}^N (1-\alpha_j)D_j$ where the first $k$ components have cusp singularities (cone angle $0$) and the remaining components have conic singularities (cone angle $2\pi\alpha_j$ for $\alpha_j \in (0,1)$). In the mixed cusp/conic case, the theorem gives a finite-energy variational extension of the Poincar\'e-type framework developed by Auvray \cite{Auvray13,Auvray,Auvray1,Auvray18} and of the toric extremal Poincar\'e analysis of Apostolov--Auvray--Sektnan \cite{ApostolovAuvraySektnan}.

\begin{cor}[Weighted K-energy convexity for mixed cusp and conic singularities, see Corollary \ref{prop:extention-K-energy-divisor}]\label{cor:mixed-singularities}
The weighted twisted Mabuchi functional $\cM_{\mathrm{v},\mathrm{w}}^{2\pi[D]}$ for mixed cusp/conic divisors admits a unique greatest lower semicontinuous extension to $\cET(X, \omega)$, that is moreover convex and continuous along weak geodesics.
\end{cor}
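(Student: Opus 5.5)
The plan is to deduce the corollary from Theorem~\ref{thm:main0}. It then suffices to show that $\chi = 2\pi[D]$, for the mixed cusp/conic divisor $D=\sum_{i=1}^k D_i + \sum_{j=k+1}^N (1-\alpha_j)D_j$, is a $T$-invariant positive $(1,1)$-current satisfying the decomposition condition \eqref{eq: ChiProp}. We also assume $T\subset \Aut_{red}(X,\chi)$, i.e.\ $T$ preserves each component $D_i$; this is part of the setup.

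\textbf{Decomposition of the current.} For each component fix a holomorphic section $s_i$ of $\mathcal{O}(D_i)$ cutting out $D_i$, and a smooth $T$-invariant hermitian metric $h_i$; this is possible by averaging over the compact torus $T$. The Poincar\'e--Lelong formula gives
\[
2\pi[D_i] = \Theta_{h_i} + \ddc \log|s_i|^2_{h_i},
\]
with $\Theta_{h_i}$ smooth, up to the normalization of $\ddc$ used in the paper.

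For the conic components, with $\alpha_j\in(0,1)$, I put the potential into the $f$-part. The contribution is $(1-\alpha_j)\,\ddc\log|s_j|^2$, so I write it as $\frac12\ddc f_j$ with $f_j = 2(1-\alpha_j)\log|s_j|^2$. Then $e^{-f_j} = |s_j|^{-4(1-\alpha_j)}$. This is not automatically integrable, so I will instead split the coefficient between the $f$-part and the $\hat f$-part. Writing $(1-\alpha_j) = a_j + b_j$, I take $f_j = 2a_j\log|s_j|^2$ with $2a_j<1$, which gives local integrability of $|s_j|^{-4a_j}$ transversally to an snc divisor. The remaining $b_j\log|s_j|^2$ goes into $\hat f$.

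\textbf{Cusp components.} The cusp components, with coefficient $1$, must go entirely into $\hat f$, since $e^{-2\log|s|^2}$ is never integrable. So the key point is to check that $\hat f = \sum_i c_i \log|s_i|^2_{h_i}$, after rescaling by a small constant, lies in $\cET(X,\omega)$. Logarithmic singularities have zero Lelong number only if the coefficient is zero, so $\log|s|^2$ is not in $\cE$. This is the main obstacle.

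\textbf{Resolving the obstacle.} I would resolve it by exploiting the freedom in $\beta$ and in the choice of metrics. Instead of $\log|s|^2$, one uses the Poincar\'e-type potential. The difference $\log|s|^2 - (-\log(-\log|s|^2))$ is not removable, however, so a better route is needed.

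The better route is to use $f$ more cleverly. Note that \eqref{eq: ChiProp} requires only $e^{-f}\in L^1$, and $f$ need only be upper semicontinuous. Near the cusp divisor, take
\[
f = 2\log|s_i|^2 + 2\log\bigl((-\log|s_i|^2)^{2}\bigr).
\]
Then $e^{-f} = \dfrac{1}{|s_i|^4(\log|s_i|^2)^4}$. This is still not integrable in real codimension $2$, so I take instead $f = \log|s_i|^2 + 2\log(-\log|s_i|^2)$. This gives $e^{-f}=|s|^{-2}(\log|s|^2)^{-2}$, which is integrable: it is the Poincar\'e volume density. Then $\frac12\ddc f = \frac12\cdot 2\pi[D_i] + \ddc\log(-\log|s_i|^2) + \text{smooth}$.

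The leftover half of the current, $\frac12\ddc\log|s_i|^2$, still remains and is again not in $\cE^1$. It is this persistent log term that I expect to require the precise conventions of Section~\ref{sec:wen}: specifically, how $\enR^\chi_{\mathrm v}$ pairs $\chi$ against $\varphi$ and entropy. I would therefore check whether the paper's Corollary~\ref{prop:extention-K-energy-divisor} absorbs $[D]$ via a modified entropy reference measure $e^{-f}\omega^n$ rather than through $\hat f$, and otherwise rerun the proof of Theorem~\ref{thm:main0} directly.

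Rerunning the proof would proceed as follows. The entropy term relative to the integrable measure $e^{-f}\omega^n$ is convex by Berman--Berndtsson positivity. The smooth part $\beta$ and the $\cET$ part $\hat f$ give linear or convex contributions via the weighted Monge--Amp\`ere energy. Finally, continuity along weak geodesics follows from $d_1$-approximation exactly as in Berman--Darvas--Lu.
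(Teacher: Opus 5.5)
Your overall plan (verify that $\chi=2\pi[D]$ satisfies \eqref{eq: ChiProp}, then invoke Theorem~\ref{thm:main0}) is exactly the paper's. However, the proposal never produces an admissible decomposition, so as it stands it does not prove the corollary.

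The difficulties you meet come from a factor-of-two slip. With the paper's convention $\ddc=2i\partial\dbar$, Poincar\'e--Lelong reads $2\pi[D_i]=\theta_i+\frac12\ddc\log|s_{D_i}|^2_{h_i}$, and the $\frac12$ in \eqref{eq: ChiProp} is there precisely to match this. Hence each conic component goes entirely into $f$: take $f_j=(1-\alpha_j)\log|s_{D_j}|^2_{h_j}$, so that $e^{-f_j}=|s_{D_j}|_{h_j}^{-2(1-\alpha_j)}$ is integrable because $1-\alpha_j<1$ (Lemma~\ref{lem:integrability-SNC}). No splitting is needed.

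Your splitting would in fact be illegitimate. A term $b_j\log|s_j|^2$ with $b_j>0$ has positive Lelong number along $D_j$, so it cannot lie in $\cET(X,\omega)$, as you yourself note a paragraph later. Similarly, the ``leftover half current'' on the cusp components is an artifact of the same slip. The choice $f_i=\log|s_{D_i}|^2_{h_i}+2\log(\lambda_i-\log|s_{D_i}|^2_{h_i})$, which you essentially found, gives
\[
\tfrac12\ddc f_i = 2\pi[D_i]-\theta_i+\ddc\log\bigl(\lambda_i-\log|s_{D_i}|^2_{h_i}\bigr),
\]
so the whole divisor is absorbed into the entropy reference measure.

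The step that is genuinely missing is what to do with the remaining term $\ddc\log(\lambda_i-\log|s_{D_i}|^2_{h_i})$. It is not smooth, so it cannot go into $\beta$, and your proposal leaves it unaccounted for. The paper cancels it with $\hat f=-\sum_{i\le k}\log(\lambda_i-\log|s_{D_i}|^2_{h_i})$. Then $\beta=\chi-\frac12\ddc f-\ddc\hat f=\sum_{i\le k}\theta_i+\sum_{j>k}(1-\alpha_j)\theta_j$ is smooth.

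This $\hat f$ is where the finite-energy hypothesis is checked:
\begin{itemize}
\item It is $T$-invariant for $T$-invariant $h_i$ and bounded above.
\item It is $\omega$-psh for $\lambda_i$ large (Lemma~\ref{lem:modelmetric}).
\item $\omega+\ddc\hat f$ is a Poincar\'e-type model metric, so $\int_X|\hat f|\,(\omega+\ddc\hat f)^n$ reduces to $\int_0^\varepsilon\frac{\log(-\log r)}{r\log^2 r}\,dr<\infty$ (Lemma~\ref{lem:poincare-E1}).
\item It has only $\log\log$ growth, hence zero Lelong numbers, unlike $\log|s|^2$.
\end{itemize}
Integrability of $e^{-f}$ near the simple normal crossing divisor follows locally by Fubini from the Poincar\'e volume computation (Lemma~\ref{lem:poincare-finite-volume}) and Lemma~\ref{lem:integrability-SNC}. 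Theorem~\ref{thm:main0} then applies.

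Your fallback of rerunning the proof of Theorem~\ref{thm:main0} does not get around this. Without a decomposition of exactly this form, the part of $\cM^{2\pi[D]}_{\mathrm{v},\mathrm{w}}$ outside the entropy is not $d_1$-continuous. The identification of the greatest lower semicontinuous extension (Remark~\ref{rem:decomposed-extension}) then fails.
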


Regarding Corollary \ref{cor:mixed-singularities}, Xu \cite{Xu25} and Xu--Zheng \cite{Xu_Zheng25} established uniqueness results for extremal and cscK Poincar\'e-type metrics along smooth divisors, employing the convexity framework of Berman--Berndtsson \cite{BB} where applying a partition of unity involves subtleties near the divisor. Thus Corollary \ref{cor:mixed-singularities} can be viewed as a finite-energy generalization of the smooth-divisor convexity framework appearing in \cite{Xu25,Xu_Zheng25}. It replaces the smooth-divisor and Poincar\'e-type assumptions by the general current decomposition \eqref{eq: ChiProp}, thereby allowing simple normal crossing divisors with mixed cusp and conic components and working on the full finite-energy space $\cET(X,\omega)$. Furthermore, we establish in Proposition \ref{prop:first-var-weak-poincare} the first variation formula for the weighted twisted Mabuchi functional $\cM_{\mathrm{v},\mathrm{w}}^{2\pi[D]}$ on the space of Poincar\'e-type potentials, showing that critical points satisfy the weighted twisted scalar curvature equation~\mbox{$S_{\mathrm{v}}^{2\pi[D]}(\omega_\varphi) = \mathrm{w}$}. A key ingredient in the proof is the following decomposition formula for the Ricci curvature of a Poincar\'e-type K\"ahler metric $\omega_\varphi$ established in Corollary \ref{decomp}:
\[
\Ric(\omega_\varphi) = \theta_\varphi +\theta'_\varphi+ 2\pi[D],
\]
where $\theta_\varphi$ is smooth on $X\setminus D$ and controlled near $D$, and $\theta'_\varphi$ is controlled by a Poincar\'e-type metric.

\subsubsection*{Openness of coercivity}
We now turn to the openness of the coercivity condition. A natural question, repeatedly arising in K\"ahler geometry, is to understand the stability of
cscK metrics---possibly with prescribed singularities---under variations of the underlying data,
such as small deformations of the complex structure or perturbations of the K\"ahler (or twisting)
class.  This viewpoint goes back to the deformation theory of LeBrun--Simanca
and has since been developed in many directions, including variational and analytic approaches and
gluing constructions; see for instance \cite{LeBrunSimanca94,Tosatti10,Donaldson2012,KZ, PTT23, BJT, PT26}
and the references therein.  In the present setting, we prove a finite-energy openness theorem for the relative weighted twisted K-energy under perturbations of the singular twist class.

It is natural to work with the \emph{relative} (i.e.\ $T^\C$-invariant) version of the weighted twisted K-energy. Indeed, as in the classical cscK problem, the vanishing of the (weighted twisted) Futaki character is a necessary condition for the existence of critical points. The relative theory uses Calabi's extremal modification: one introduces the weighted twisted extremal function $\ell^{\mathrm{ext}}_{\omega,\mathrm{v},\mathrm{w},\chi}$ and the associated relative functional
\[
\cM^\mathrm{rel}_{\mathrm{v},\mathrm{w},\chi}:=\cM_{\mathrm{v},\mathrm{w}\ell^{\mathrm{ext}}_{\omega,\mathrm{v},\mathrm{w},\chi}}^{\chi},
\]
so that the corresponding Futaki obstruction vanishes and $\cM^\mathrm{rel}_{\mathrm{v},\mathrm{w},\chi}$ is $T^\C$-invariant. We refer to Subsection~\ref{subsec:futaki} for the precise definitions and further discussion.
\begin{mainthm}[Openness of coercivity for mixed cusp and conic singularities, see Theorem \ref{thm:coeropen0}]\label{thm:coeropen}
Fix smooth weights $\mathrm{v},\mathrm{w}\in C^\infty(\ft^\vee)$ with $\mathrm{v},\mathrm{w}>0$ on the (normalized) moment polytope $P$.
Let $D=\sum_{i=1}^k D_i + \sum_{j=k+1}^N (1-\alpha_j)D_j$ be a simple normal crossing divisor, and consider the twist class $\chi = 2\pi[D]$.

Assume that there exist $\delta > 0, A \in \R$ such that 
$$
	\cM^\mathrm{rel}_{\mathrm{v},\mathrm{w},\chi}(\varphi) \geq \delta \, d_{1, T^\mathbb{C}}(\varphi,0) - A
	$$
	for all $\varphi \in \cET(X, \omega)$ with $\enE(\f)= 0$. Then, for any $\hat{\delta} < \delta$, there exist $\hat{A} \in \R$ and a neighborhood $\mathcal{U}$ of $\boldsymbol{\alpha}=(0,\dots,0,\alpha_{k+1},\dots,\alpha_N)\in [0,1]^N$ such that 
	$$
	\cM^\mathrm{rel}_{\mathrm{v},\mathrm{w},\hat{\chi}}(\varphi) \geq \hat{\delta} \, d_{1, T^\mathbb{C}}(\varphi,0) - \hat{A}
	$$
	for all $\varphi \in \cET(X, \omega)$ with $\enE(\f)= 0$ and any $\hat{\boldsymbol{\alpha}}=(\hat{\alpha}_{1},\dots,\hat{\alpha}_N) \in \mathcal{U}$, where $\hat{\chi}=\sum_{j=1}^N 2\pi(1-\hat{\alpha}_j)[D_j]$. Moreover, $\delta-\hat \delta$ is linearly controlled by $\boldsymbol{\alpha} - \hat{\boldsymbol{\alpha}}$.

In particular, if $\cM^\mathrm{rel}_{\mathrm{v},\mathrm{w},\chi}$ is coercive relative to $T^\mathbb{C}$, then $\cM^\mathrm{rel}_{\mathrm{v},\mathrm{w},\hat{\chi}}$ is coercive relative to $T^\mathbb{C}$ for any $\hat{\boldsymbol{\alpha}} \in \mathcal{U}$. 
\end{mainthm}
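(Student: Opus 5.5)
The plan is to compare the two relative functionals directly and bound their difference linearly in $d_{1,T^\C}$. Write $\hat\chi = \chi + \eta$ with
\[
\eta=\sum_{j=1}^N 2\pi(\alpha_j-\hat\alpha_j)[D_j].
\]
The entropy terms $\ent_{\mathrm{v}}$ do not depend on the twist, so they cancel. The difference $\cM^{\mathrm{rel}}_{\mathrm{v},\mathrm{w},\hat\chi}-\cM^{\mathrm{rel}}_{\mathrm{v},\mathrm{w},\chi}$ therefore splits into two pieces:
\begin{itemize}
\item the twisted Ricci energy of $\eta$, namely $\enR^{\hat\chi}_{\mathrm{v}}-\enR^{\chi}_{\mathrm{v}}$, which is linear in the twist;
\item the change of the weighted energy term coming from the change of extremal function, $\enE_{\mathrm{v}\,\mathrm{w}(\ell^{\mathrm{ext}}_{\hat\chi}-\ell^{\mathrm{ext}}_{\chi})}$.
\end{itemize}
The aim is to show that on $\{\enE(\f)=0\}$ both pieces are bounded in absolute value by $C|\boldsymbol\alpha-\hat{\boldsymbol\alpha}|\,(d_1(\f,0)+1)$. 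Then we pass to the $T^\C$-quotient. Both relative functionals are $T^\C$-invariant (Subsection~\ref{subsec:futaki}), so we may replace $\f$ by a $T^\C$-translate $\sigma\cdot\f$ with $d_1(\sigma\cdot\f,0)\le d_{1,T^\C}(\f,0)+1$, renormalized to $\enE=0$. This gives
\[
\cM^{\mathrm{rel}}_{\hat\chi}(\f)\ge(\delta-C|\boldsymbol\alpha-\hat{\boldsymbol\alpha}|)\,d_{1,T^\C}(\f,0)-A-C'.
\]
Choosing $\mathcal U$ small then yields any prescribed $\hat\delta<\delta$, with the linear control stated in Theorem~\ref{thm:coeropen}.

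\textbf{Extremal term.} The extremal function $\ell^{\mathrm{ext}}$ is defined by projecting onto affine functions via a finite-dimensional linear system. The Gram matrix of this system depends only on $\mathrm{v}$ and $P$, and the right-hand side is affine in the twist through the twisted total scalar curvature and the twisted Futaki pairings. Hence $\ell^{\mathrm{ext}}_{\hat\chi}-\ell^{\mathrm{ext}}_{\chi}$ is an affine function on $\ft^\vee$ with coefficients of size $O(|\boldsymbol\alpha-\hat{\boldsymbol\alpha}|)$. For a bounded smooth weight $u$ on $P$, the functional $\enE_{\mathrm{v}u}$ satisfies the finite-energy estimate $|\enE_{\mathrm{v}u}(\f)|\le C\|u\|_{C^0(P)}\,d_1(\f,0)$, which is standard from the weighted analogue of $|\enE|$-type bounds along $d_1$ (the moment map is uniformly bounded in $P$). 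This handles the second piece.

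\textbf{Twist term (the main obstacle).} Here I would prove the general statement advertised in the abstract: if $\eta$ is a $T$-invariant closed positive current satisfying \eqref{eq: ChiProp}, then $|\enR^{\eta}_{\mathrm{v}}(\f)|\le C_\eta(d_1(\f,0)+1)$, with $C_\eta$ linear in $\eta$. The argument has three steps.
\begin{enumerate}
\item For $\eta=\beta$ smooth, write $\beta\le C\omega$. The weighted $\enR^\beta_{\mathrm{v}}$ is a mixed-energy integral $\int_0^1\int\dot\f_t\,\beta\wedge\omega_{\f_t}^{n-1}\mathrm{v}(m_t)$ along the segment from $0$ to $\f$. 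The crude bound of this integrand by $C\int|\f|\,\omega_{\f_t}^{n-1}\wedge\omega$ is in turn controlled by $d_1(\f,0)$ by the Darvas-type comparison $d_1\approx\int|\f|(\omega^n+\omega_\f^n)$ extended to mixed terms.
\item For $\eta=2\pi[D_j]$, I would write $2\pi[D_j]=\theta_j+\ddc\log|s_j|_h^2$. Since $\log|s_j|^2_h$ is quasi-psh, it is bounded above and lies in $\cET$ after scaling. The singular part then becomes, after integration by parts, $\int\log|s_j|_h^2\,(\omega_\f^n\mathrm{v}(m_\f)-\omega^n\mathrm{v}(m_0))$, up to smooth terms.
\item The dangerous direction is bounding the part of this integral where $\log|s_j|^2_h$ is very negative against $\omega_\f^n$ by $d_1$. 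I would use that $\log|s_j|^2_h$ has finite energy together with the Berman--Darvas--Lu estimate $\left|\int u\,(\omega_\f^n-\omega_\psi^n)\right|\le C\,d_1(\f,\psi)^{1/2}\,(\text{energy bounds})^{1/2}$ for $u\in\cE^1$. Combined with the triangle inequality this gives at most linear growth in $d_1(\f,0)$. The weight $\mathrm{v}(m_\f)$ is absorbed since it is bounded above and below on $P$.
\end{enumerate}
Summing over $j$ with coefficients $\alpha_j-\hat\alpha_j$ gives linearity. Once this lemma is in place, the rest is bookkeeping.
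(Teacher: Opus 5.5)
The gap is the twist lemma, which is false as stated. Your treatment of the extremal term (affine dependence of $\ell^{\mathrm{ext}}$ on the twist), of the smooth curvature forms $\theta_j$, and of the $T^\C$ reduction is fine.

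The function $\log|s_j|_h^2$ is not in $\cET(X,\om)$ under any scaling. It has a positive Lelong number along $D_j$, whereas functions in $\mathcal E(X,\om)$ have zero Lelong numbers. So \eqref{equ:MAvhold} cannot be applied with $\tau=\log|s_j|_h^2$. In fact no bound $\bigl|\int_X\log|s_j|_h^2\,\MA_{\mathrm{v}}(\f)\bigr|\le C(d_1(\f,0)+1)$ holds on $\cET$:
\begin{itemize}
\item A Poincar\'e-type potential lies in $\cET$ (Lemma~\ref{lem:poincare-E1}), but by \eqref{eq:MA-v-asymptotic} it has $\MA_{\mathrm{v}}(\f)\asymp\om^n/(|s|^2\log^2|s|^2)$.
\item The integral is therefore comparable to $-\int_0^{\varepsilon}\frac{dr}{r|\log r|}=-\infty$.
\item By upper semicontinuity, smooth $d_1$-approximants of this potential give values tending to $-\infty$ while $d_1$ stays bounded.
\end{itemize}
Concretely, for a single cusp component such a $\f$ has $\cM^{\mathrm{rel}}_{\mathrm{v},\mathrm{w},\chi}(\f)<\infty$ (Corollary~\ref{cor:mabuchi-welldef-poincare}). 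However, $\cM^{\mathrm{rel}}_{\mathrm{v},\mathrm{w},\hat\chi}(\f)=+\infty$ for every $\hat\alpha_j>0$, because its entropy relative to $|s_j|_h^{-2(1-\hat\alpha_j)}\om^n$ diverges. So the difference of the two functionals is not even finite on $\cET$.

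For the same reason, ``the entropy terms cancel'' is not legitimate. In the finite-energy extension the entropy is taken relative to $e^{-f}\om^n$, with $f$ depending on the twist (Remark~\ref{rem:decomposed-extension}). Moreover, $\ent_{\mathrm{v}}$ and the singular Ricci term can separately be $+\infty$ and $-\infty$.

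What is missing is a one-sided argument. It should use the sign of the singular term, and in the unfavourable direction control that term by the entropy rather than by $d_1$.
\begin{itemize}
\item If $\hat\alpha_j\ge\alpha_j$ (the only possibility at cusp components), the singular contribution is $-\tfrac{\hat\alpha_j-\alpha_j}{2}\int_X\log|s_j|_h^2\,\MA_{\mathrm{v}}(\f)\ge0$ once $|s_j|_h\le1$. It is simply discarded (Proposition~\ref{prop:cusp-to-conic}).
\item If $\hat\alpha_j<\alpha_j$, write $\hat f=f+tF$ with $F\le0$ a multiple of $\log|s_j|_h^2$. For $\mu=\MA_{\mathrm{v}}(\f)$, the Legendre duality \eqref{equ:entleg} with $g=-pF$ gives $-\int_X F\,d\mu\le\frac1p\Ent(\mu\mid e^{-f}\om^n)+C$ (Lemma~\ref{lemma:Ent-compare}). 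Hence $\ent_{\mathrm{v}}^{f+tF}\ge(1-\tfrac tp)\ent_{\mathrm{v}}^{f}-C$, provided $e^{-pF}\in L^1(e^{-f}\om^n)$ for some $p>1$. This integrability is exactly where $\alpha_j>0$ is needed (Lemma~\ref{lem:integrability-SNC}).
\end{itemize}
In the second case, one re-expresses $(1-\tfrac tp)\ent_{\mathrm{v}}^{f}$ through the coercivity hypothesis on $\cM^{\mathrm{rel}}_{\mathrm{v},\mathrm{w},\chi}$. Only the remaining smooth energy terms are then bounded linearly by $d_1$, at a cost of $\delta t/p+Ct$ in the slope (Proposition~\ref{prop:coercivity-weighted-twisted}). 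Sacrificing a fraction of the entropy is what replaces your two-sided perturbation bound, and it is why the conclusion only gives $\hat\delta<\delta$ on a neighbourhood.
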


In \cite{aoi2022}, the author discusses that the existence
of a Poincar\'e-type cscK metric implies the existence of cscK cone metrics with small cone angles. The existence of a Poincar\'e-type cscK metric along a smooth hypersurface $D\in |L_X|$ in a polarized manifold $(X,L_X)$ is used to obtain cscK cone metrics in the fixed class $c_1(L_X)$ with sufficiently small cone angles. Aoi's approach is analytic and relies on a deformation argument, and therefore requires additional assumptions, including the triviality of the relevant automorphism groups of both $D$ and the pair $(X,D)$. By contrast, the result proved here is purely variational: starting from coercivity of the finite-energy twisted K-energy at the cusp, it gives coercivity of the corresponding conic K-energy for all sufficiently small positive cone angles. Moreover, the argument applies to simple normal crossing divisors and to the relative setting with holomorphic vector fields.

Sz\'ekelyhidi \cite{sze06} formulated a notion of relative K-stability for the triple $(X, D, L)$ in the polarized case, utilizing a generalized Donaldson--Futaki invariant for test configurations. Crucially, he identified a specific numerical constraint---related to the deformation to the normal cone of $D \subset X$---necessary to ensure the metric exhibits Poincar\'e asymptotics. On the differential-geometric side, Auvray \cite{Auvray13, Auvray18} established that the existence of a Poincar\'e-type cscK (resp. extremal) metric implies a slope inequality relating the average scalar curvature (resp. extremal function) of $[\omega]$ to that of $[\omega]|_{D}$, which corresponds precisely to Sz\'ekelyhidi's numerical constraint. Furthermore, in the cscK (resp. extremal) setting, \cite{Auvray1} shows that existence requires each component $D_j$ to admit a smooth cscK (resp. extremal) metric in the class $[\omega]|_{D_j}$. In the toric setting, Apostolov, Auvray, and Sektnan \cite{ApostolovAuvraySektnan} demonstrated that the existence of an extremal Poincar\'e-type K\"ahler metric indeed implies a corresponding notion of K-stability. Inspired by \cite[Conjecture 4.14]{ApostolovAuvraySektnan} and the stability framework established by Sz\'ekelyhidi, we propose the following conjecture.

For simplicity, we assume that $D$ is a smooth divisor. Let $\ell^{\mathrm{ext}} := \ell^{\mathrm{ext}}_{\omega,1,1,2\pi[D]}$ denote the twisted extremal function on $X$ associated to the twist $\chi = 2\pi[D]$ (as defined in Section \ref{subsec:futaki}), and let $\ell^{\mathrm{ext}}_{D} := \ell^{\mathrm{ext}}_{\omega|_{D},1,1}$ denote the (untwisted) extremal function intrinsic to the triple $(D, [\omega]|_{D}, T)$.
A $(1,1,2\pi[D])$-extremal Poincar\'e-type K\"ahler metric is defined by a $T$-invariant Poincar\'e-type potential $\varphi$ such that $S^{2\pi[D]}(\omega_\varphi) = \ell^{\mathrm{ext}}$. This equation is a special case of \eqref{eq:wtscK} with $\mathrm{v}=1$ and $\mathrm{w} = \ell^{\mathrm{ext}}$.

\begin{conj}[{Existence of $(1,1,2\pi[D])$-extremal Poincar\'e-type K\"ahler metrics}]\label{conj:twisted_extremal}
Let $(X, \omega)$ be a compact K\"ahler manifold and let $D$ be a smooth divisor. Let $T$ be a maximal compact torus in the reduced automorphism group $\Aut_{red}(X, D)$. The manifold $X$ admits a $T$-invariant $(1,1,2\pi[D])$-extremal metric in $[\omega]$ if and only if the following conditions are satisfied:

\begin{itemize}
    \item[(i)] \emph{Global Coercivity:} The twisted K-energy $\cM_{1, \ell^{\mathrm{ext}}}^{2\pi[D]}$ is coercive on the space of finite energy potentials $\cET(X, \omega)$ relative to $T^\mathbb{C}$;
    
    \item[(ii)] \emph{Boundary Coercivity:} The extremal K-energy $\cM_{1, \ell^{\mathrm{ext}}_{D}}$ is coercive on the space of finite energy potentials $\cET(D, \omega|_{D})$ relative to $T^\mathbb{C}$;
    
    \item[(iii)] \emph{Boundary Stability Condition:} On the divisor $D$, the extremal functions satisfy:
    \[
    \ell^{\mathrm{ext}}_{D} - \ell^{\mathrm{ext}} \big|_{D} = c > 0
    \]
    for some constant $c$.
\end{itemize}
\end{conj}

\begin{remark}
When $D = \sum_{i=1}^k D_i$ is a simple normal crossing divisor, the conditions above must be generalized to all strata $D_I = \bigcap_{i \in I} D_i$. Specifically:
\begin{itemize}
    \item Condition (ii) requires coercivity of the twisted K-energy corresponding to the pair $(D_I, D_I \cap (\bigcup_{j \notin I} D_j))$.
\item Condition (iii) requires the positivity of the difference between the extremal function intrinsic to the pair $(D_I, D_I \cap (\bigcup_{j \notin I} D_j))$ and the restriction of the extremal function of $(D_I, \omega|_{D_I})$ to $D_I \cap (\bigcup_{j \notin I} D_j)$.\end{itemize}
\end{remark}

\subsection*{Organization of the paper.} Section~2 develops the weighted twisted formalism. We recall the moment-map normalization, the smooth weighted Monge--Amp\`ere operators and energy functionals, and then define the finite-energy extension of the weighted twisted Mabuchi functional using the decomposition of the twisting current. We also fix the Futaki invariant and the extremal normalization needed for the relative theory.

Section~3 proves the convexity theorem. After recalling weak geodesics, we establish the convexity formula first in the smooth setting and then pass to the finite-energy extension by approximation. Section~4 applies this result to the relative weighted Mabuchi functional and proves openness of coercivity under perturbations of cone angles, including conic K-energy coercivity and the passage from cusp coercivity to small-angle conic coercivity.

The appendix relates the abstract finite-energy construction to Poincar\'e type geometry. We first show that Poincar\'e type potentials lie in the finite-energy domain. We then prove the Ricci-current decomposition for Poincar\'e type metrics and use it to compute the first variation of the associated extended weighted twisted Mabuchi functional on the Poincar\'e type potentials. Thus the functional studied in the main body recovers the expected logarithmic Mabuchi functional on Poincar\'e type metrics.

\tableofcontents

\section{Weighted twisted formalism}\label{sec:wen}

We fix in this section the notation and variational framework used throughout the paper. 
The weighted scalar curvature, weighted Futaki invariant, and weighted Mabuchi formalism were introduced and developed by Lahdili in \cite{Lah,Lah23}. 
Their coercivity and finite-energy aspects were subsequently developed in \cite{AJL}, while the equivariant weighted Monge--Amp\`ere formalism and the precise extension results used below are presented systematically in \cite{BJT}. 
The underlying unweighted pluripotential theory is due to Berman--Darvas--Lu \cite{BDL}. 
We recall these constructions in the form needed for the weighted twisted setting.

\subsection{Moment maps and finite-energy potentials}

We use the standard Hamiltonian conventions for torus actions and momentum images from \cite{Atiyah,GS}. 
For finite-energy potentials and weak geodesics, we use the $d_1$-geometry developed in \cite{c,BK,Dar,Darvas_Rubinstein_2017,CTW18,BDL}, together with the equivariant conventions adopted in the weighted setting in \cite{AJL,BJT}.

Let $(X,\om)$ be a compact K\"ahler manifold of dimension $n$. 
Throughout the paper we use the convention
\[
\dc=i(\dbar-\partial),
\qquad
\ddc=2i\partial\dbar .
\]

Let $\chi$ be a fixed closed real $T$-invariant $(1,1)$-current on $X$, and fix a compact torus
\[
T\subset\Aut_{red}(X,\chi)
:=
\{g\in\Aut_{red}(X)\mid g^*\chi=\chi\},
\]
with Lie algebra $\ft$. 
Here $\Aut_{red}(X)$ denotes the reduced automorphism group, as in the standard references on holomorphic vector fields with zeros; see for instance \cite{gauduchon-book,Fuj}. 
We denote by $\cH^T(X,\om)$ the space of smooth $T$-invariant K\"ahler potentials in the class $[\om]$.

For a closed real $T$-invariant $(1,1)$-form $\theta$, a moment map is a $T$-invariant map
\[
m_\theta:X\to\ft^\vee
\]
such that, for each $\xi\in\ft$,
\begin{equation}\label{equ:moment}
-\de m_\theta^\xi
=
i(\xi)\theta
=
\theta(\xi,\cdot),
\qquad
m_\theta^\xi:=\langle m_\theta,\xi\rangle .
\end{equation}
The moment map is unique up to an additive constant in $\ft^\vee$. 
We use the same convention for closed $T$-invariant $(1,1)$-currents, in which case the moment map is understood distributionally. 
In particular, if $u$ is a $T$-invariant distribution, then
\begin{equation}\label{equ:mddc}
m_u^\xi
:=
\dc u(\xi)
=
-\de u(J\xi)
\end{equation}
is the moment map of the current $\ddc u$.

For $\f\in\cH^T(X,\om)$, we write
\[
\om_\f:=\om+\ddc\f,
\qquad
m_{\om_\f}:=m_\om+m_\f .
\]
We choose the additive normalization of $m_\om$ once and for all so that the momentum image
\[
P_X:=m_{\om_\f}(X)\subset\ft^\vee
\]
is independent of $\f$. 
After this normalization, all weights will be regarded as smooth functions on a neighborhood of the compact polytope $P_X$.

We also use the finite-energy space
\[
\cET(X,\om):=\mathcal E^1(X,\om)^T,
\]
the closed subspace of $T$-invariant elements in the usual finite-energy space. 
Equivalently, $\cET(X,\om)$ is the $d_1$-metric completion of $\cH^T(X,\om)$. 
All variational formulae below are first understood on $\cH^T(X,\om)$, and their finite-energy extensions are recorded afterward.

\subsection{Smooth weighted Monge--Amp\`ere operators}

The weighted scalar-curvature framework originates in Lahdili's work \cite{Lah,Lah23}. 
We use its equivariant weighted Monge--Amp\`ere formulation developed in \cite{AJL,BJT}; in particular, the definitions and integration-by-parts identities below are those of \cite[Definition~3.9 and Lemma~3.10]{BJT}.

Let $\mathrm{v}\in C^\infty(\ft^\vee)$ be a smooth weight. 
The $\mathrm{v}$-weighted Monge--Amp\`ere measure of $\f\in\cH^T(X,\om)$ is
\begin{equation}\label{equ:MAv}
\MA_{\mathrm{v}}(\f)
:=
\mathrm{v}(m_{\om_\f})\,\om_\f^n .
\end{equation}
If $\chi$ is a closed $T$-invariant $(1,1)$-current with moment map $m_\chi$, the corresponding twisted weighted operator is
\begin{equation}\label{equ:twMA}
\MA_{\mathrm{v}}^\chi(\f)
:=
\mathrm{v}(m_{\om_\f})\,n\,\chi\wedge\om_\f^{n-1}
+
\langle \mathrm{v}'(m_{\om_\f}),m_\chi\rangle\,\om_\f^n .
\end{equation}

Dividing by $\MA_{\mathrm{v}}(\f)$ gives the weighted trace. 
At $\f=0$, this reads
\[
\tr_{\om,\mathrm{v}}(\chi)
:=
\frac{\MA_{\mathrm{v}}^\chi(0)}{\MA_{\mathrm{v}}(0)}
=
\tr_\om(\chi)
+
\langle(\log \mathrm{v})'(m_\om),m_\chi\rangle .
\]
More generally, for smooth $\f$,
\[
\tr_{\om_\f,\mathrm{v}}(\chi)
:=
\frac{\MA_{\mathrm{v}}^\chi(\f)}{\MA_{\mathrm{v}}(\f)}.
\]

The basic integration-by-parts identity is
\begin{equation}\label{equ:twMAddc}
\int_X g\,\MA_{\mathrm{v}}^{\ddc f}(\f)
=
-n\int_X
\mathrm{v}(m_{\om_\f})\,\de g\wedge\dc f\wedge\om_\f^{n-1}.
\end{equation}
In particular, for $T$-invariant smooth functions $f,g$,
\begin{equation}\label{equ:twMAsymm}
\int_X g\,\MA_{\mathrm{v}}^{\ddc f}(\f)
=
\int_X f\,\MA_{\mathrm{v}}^{\ddc g}(\f).
\end{equation}
This symmetry is the variational closedness property behind the weighted energy functionals.

\subsection{Smooth weighted energies}

The weighted energy and Mabuchi formalism was introduced by Lahdili in \cite{Lah,Lah23} and further developed in \cite{AJL,BJT}. 
We use the twisted weighted energy construction of \cite[Proposition~3.13 and Lemma~3.14]{BJT}.

The $\mathrm{v}$-weighted Monge--Amp\`ere energy is denoted by $\enE_{\mathrm{v}}$ and is characterized by
\begin{equation}\label{equ:Ev-first-var}
\frac{d}{dt}\bigg|_{t=0}
\enE_{\mathrm{v}}(\f+t\psi)
=
\int_X \psi\,\MA_{\mathrm{v}}(\f).
\end{equation}

Similarly, for a closed $T$-invariant $(1,1)$-current $\chi$, we denote by
\[
\enE_{\mathrm{v}}^\chi:\cH^T(X,\om)\to\R
\]
a variational primitive of $\MA_{\mathrm{v}}^\chi$, so that
\begin{equation}\label{equ:Evchi-first-var}
\frac{d}{dt}\bigg|_{t=0}
\enE_{\mathrm{v}}^\chi(\f+t\psi)
=
\int_X \psi\,\MA_{\mathrm{v}}^\chi(\f).
\end{equation}
The special case of exact twists is particularly useful. 
If $\chi=\ddc u$, with $u$ a $T$-invariant distribution for which the expression is defined, then one may take
\begin{equation}\label{eq:energy-ddc}
\enE_{\mathrm{v}}^{\ddc u}(\f)
=
\int_X u\,\MA_{\mathrm{v}}(\f).
\end{equation}
Consequently, if $\chi=\chi_0+\ddc u$ with $\chi_0$ smooth, then, up to an additive constant,
\begin{equation}\label{eq:energy-general-current}
\enE_{\mathrm{v}}^\chi(\f)
=
\enE_{\mathrm{v}}^{\chi_0}(\f)
+
\int_X u\,\MA_{\mathrm{v}}(\f).
\end{equation}

\subsection{Weighted twisted scalar curvature and smooth Mabuchi energy}

The weighted scalar curvature, weighted Mabuchi functional, and weighted extremal equation were introduced by Lahdili in \cite{Lah,Lah23}; see also \cite{AJL}. 
We use the equivalent equivariant formulation of \cite[Sections~3.4--3.6]{BJT} and incorporate the closed $T$-invariant twist $\chi$. 
In particular, the first-variation formula below is the twisted counterpart of \cite[Theorem~5]{Lah} and \cite[Proposition~3.30]{BJT}.

\begin{defi}\label{def:weighted-twisted-scalar-curvature}\label{def:equivariant-ricci}
For $\varphi\in\cH^T(X,\omega)$, the $\mathrm{v}$-weighted Ricci curvature is
\begin{equation}\label{equ:Ric-v}
\Ric_{\mathrm{v}}(\omega_\varphi)
:=
-\frac12\ddc
\log\bigl(\mathrm{v}(m_{\omega_\varphi})\omega_\varphi^n\bigr).
\end{equation}
If $\chi$ is a closed $T$-invariant $(1,1)$-current, the weighted twisted scalar
curvature is defined by
\begin{equation}\label{equ:Svchi-measure}
S_{\mathrm{v}}^\chi(\omega_\varphi)\MA_{\mathrm{v}}(\varphi)
:=
\MA_{\mathrm{v}}^{\Ric_{\mathrm{v}}(\omega_\varphi)-\chi}(\varphi).
\end{equation}
Equivalently, using the weighted trace notation,
\begin{equation}\label{equ:Svchi}
S_{\mathrm{v}}^\chi(\omega_\varphi)
=
\tr_{\omega_\varphi,\mathrm{v}}
\bigl(\Ric_{\mathrm{v}}(\omega_\varphi)-\chi\bigr).
\end{equation}
The weighted twisted scalar curvature equation is
\begin{equation}\label{equ:wtscK}
S_{\mathrm{v}}^\chi(\omega_\varphi)
=
\mathrm{w}(m_{\omega_\varphi}).
\end{equation}
\end{defi}

When $\mathrm{v}\equiv1$, the left-hand side becomes the usual twisted scalar
curvature
\[
S^\chi(\omega_\varphi)
=
\tr_{\omega_\varphi}
\bigl(\Ric(\omega_\varphi)-\chi\bigr).
\]
Its average is the topological constant
\[
\bar S^\chi
:=
\frac{
n\int_X(\Ric(\omega)-\chi)\wedge\omega^{n-1}
}{
\int_X\omega^n
}.
\]
Thus, when $\mathrm{v}\equiv1$ and $\mathrm{w}=\bar S^\chi$, equation
\eqref{equ:wtscK} reduces to the usual twisted cscK equation. If moreover
$\chi=0$, this recovers the ordinary cscK equation with
$\mathrm{w}=\bar S$.

\begin{defi}\label{defi:relative-entropy}
For positive Radon measures $\mu,\nu$ on $X$, we use the entropy convention
\begin{equation}\label{equ:relative-entropy}
\Ent(\mu\mid\nu)
=
\begin{cases}
\displaystyle\int_X h\log h\,d\nu,
&
\text{if } \mu=h\nu,\\[0.4em]
+\infty,
&
\text{otherwise.}
\end{cases}
\end{equation}
\end{defi}

The $\mathrm{v}$-weighted entropy is
\begin{equation}\label{equ:ent-v}
\ent_{\mathrm{v}}(\f)
:=
\frac{1}{2}\Ent\bigl(\MA_{\mathrm{v}}(\f)\mid\om^n\bigr),
\end{equation}
and the $\mathrm{v}$-weighted Ricci energy twisted by $\chi$ is
\begin{equation}\label{equ:Ricci-energy-vchi}
\enR_{\mathrm{v}}^\chi(\f)
:=
\enE_{\mathrm{v}}^{-(\Ric(\om)-\chi)}(\f).
\end{equation}
The factor $1/2$ in the definition of $\Ric_{\mathrm{v}}$ is compatible with the decomposition condition \eqref{eq: ChiProp} in the introduction.

Let $\mathrm{w}\in C^\infty(\ft^\vee)$. 
We denote by $\enE_{\mathrm{v}\,\mathrm{w}}$ the weighted energy corresponding to the product weight $\mathrm{v}\,\mathrm{w}$. 
Thus
\begin{equation}\label{equ:Evw-first-var}
\frac{d}{dt}\bigg|_{t=0}
\enE_{\mathrm{v}\,\mathrm{w}}(\f+t\psi)
=
\int_X
\psi\,\mathrm{w}(m_{\om_\f})\,\MA_{\mathrm{v}}(\f).
\end{equation}

The weighted Mabuchi energy twisted by $\chi$ is then
\begin{equation}\label{eq:weighted-twisted-mabuchi}
\cM_{\mathrm{v},\mathrm{w}}^\chi(\f)
:=
\ent_{\mathrm{v}}(\f)
+
\enR_{\mathrm{v}}^\chi(\f)
+
\enE_{\mathrm{v}\,\mathrm{w}}(\f).
\end{equation}
Its first variation is
\begin{equation}\label{equ:twisted-mabuchi-first-var}
\frac{d}{dt}\bigg|_{t=0}
\cM_{\mathrm{v},\mathrm{w}}^\chi(\f+t\psi)
=
\int_X
\psi\,
\bigl(
\mathrm{w}(m_{\om_\f})-S_{\mathrm{v}}^\chi(\om_\f)
\bigr)
\MA_{\mathrm{v}}(\f).
\end{equation}
Thus the smooth critical points of $\cM_{\mathrm{v},\mathrm{w}}^\chi$ are precisely the solutions of
\begin{equation}
S_{\mathrm{v}}^\chi(\om_\f)
=
\mathrm{w}(m_{\om_\f}).
\end{equation}
The formula \eqref{equ:twisted-mabuchi-first-var} is a smooth variational
identity. For finite-energy potentials, and in particular for Poincar\'e-type
potentials, the same first-variation formula is not automatic from the lower
semicontinuous extension. In Subsection~\ref{subsec:log-mabuchi-poincare}, the
extended functional is identified with the variational primitive of the
Poincar\'e-type scalar curvature operator.

\subsection{Extension to finite energy}\label{subsec:finite-energy-extension}

We now extend the smooth weighted twisted formalism to the finite-energy space
\[
\cET(X,\om):=\mathcal E^1(X,\om)^T .
\]
The unweighted finite-energy framework used here is due to Berman--Darvas--Lu \cite{BDL}. 
The corresponding weighted Monge--Amp\`ere operators, energies, and continuity estimates were developed in \cite{AJL,BJT}; the precise measure and energy extension statements used below are collected in \cite[Propositions~3.39--3.40]{BJT}.

The weighted Monge--Amp\`ere operator admits a unique continuous extension
\[
\cET(X,\om)\ni\f
\longmapsto
\MA_{\mathrm{v}}(\f)
\]
with values in the space of $T$-invariant signed Radon measures. 
If $\mathrm{v}\geq 0$, then $\MA_{\mathrm{v}}(\f)$ is a positive Radon measure of total mass
\[
V_{\mathrm{v}}:=\int_X \mathrm{v}(m_\om)\om^n .
\]
Moreover, $\MA_{\mathrm{v}}(\f)$ integrates all finite-energy potentials, and for all $\f,\p,\tau\in\cET(X,\om)$ one has
\begin{equation}\label{equ:MAvhold}
\left|
\int_X \tau\,\bigl(\MA_{\mathrm{v}}(\f)-\MA_{\mathrm{v}}(\p)\bigr)
\right|
\leq
C\|\mathrm{v}\|_{C^0(P_X)}
d_1(\f,\p)^{1/2}
\max\{d_1(\f,0),d_1(\p,0),d_1(\tau,0)\}^{1/2},
\end{equation}
where $C$ depends only on $n=\dim_\C X$.

The weighted energy $\enE_{\mathrm{v}}$ also extends continuously to $\cET(X,\om)$, with
\begin{equation}\label{equ:wenlip}
\left|
\enE_{\mathrm{v}}(\f)-\enE_{\mathrm{v}}(\p)
\right|
\leq
C\|\mathrm{v}\|_{C^0(P_X)}\,d_1(\f,\p).
\end{equation}
If $\theta$ is a smooth closed $T$-invariant $(1,1)$-form, then $\enE_{\mathrm{v}}^\theta$ also extends continuously to $\cET(X,\om)$, and satisfies an estimate of the form
\begin{equation}\label{equ:twenhold}
\left|
\enE_{\mathrm{v}}^\theta(\f)-\enE_{\mathrm{v}}^\theta(\p)
\right|
\leq
C(\mathrm{v},\theta)\,
d_1(\f,\p)^\alpha
\max\{d_1(\f,0),d_1(\p,0)\}^{1-\alpha},
\qquad
\alpha:=2^{-n}.
\end{equation}

We shall also use exact twists whose potentials are only of finite energy. 
If $u\in\cET(X,\om)$, then \eqref{equ:MAvhold} allows us to define
\begin{equation}\label{eq:energy-ddc-finite}
\enE_{\mathrm{v}}^{\ddc u}(\f)
:=
\int_X u\,\MA_{\mathrm{v}}(\f),
\qquad
\f\in\cET(X,\om).
\end{equation}
Indeed, \eqref{equ:MAvhold} gives
\begin{equation}\label{equ:energy-ddc-finite-continuity}
\left|
\enE_{\mathrm{v}}^{\ddc u}(\f)-\enE_{\mathrm{v}}^{\ddc u}(\p)
\right|
\leq
C\|\mathrm{v}\|_{C^0(P_X)}
d_1(\f,\p)^{1/2}
\max\{d_1(\f,0),d_1(\p,0),d_1(u,0)\}^{1/2}.
\end{equation}
Thus $\enE_{\mathrm{v}}^{\ddc u}$ is continuous on $\cET(X,\om)$.

We now impose the decomposition condition on the twisting current:
\begin{equation}\label{eq:chi-decomposition}
\chi
=
\beta+\frac{1}{2}\ddc f+\ddc\hat f,
\end{equation}
where $\beta$ is smooth, $f$ is upper semicontinuous with $e^{-f}\in L^1(X,\om^n)$, and $\hat f\in\cET(X,\om)$, as in \eqref{eq: ChiProp}. 
With the decomposition \eqref{eq:chi-decomposition}, the singular term $\frac{1}{2}\ddc f$ is incorporated into the reference measure of the entropy.
We define
\begin{equation}\label{equ:ent-v-f}
\ent_{\mathrm{v}}^f(\f)
:=
\frac{1}{2}
\Ent\bigl(\MA_{\mathrm{v}}(\f)\mid e^{-f}\om^n\bigr),
\qquad
\f\in\cET(X,\om).
\end{equation}
This functional is lower semicontinuous and may take the value $+\infty$.

On smooth potentials, and whenever the terms are finite, this definition is compatible with the usual smooth decomposition. Indeed,
\begin{equation}\label{equ:entropy-shift}
\ent_{\mathrm{v}}^f(\f)
=
\ent_{\mathrm{v}}(\f)
+
\frac{1}{2}\int_X f\,\MA_{\mathrm{v}}(\f)
=
\ent_{\mathrm{v}}(\f)
+
\enE_{\mathrm{v}}^{\frac{1}{2}\ddc f}(\f).
\end{equation}
Thus the finite-energy definition agrees with the smooth formula after the exact singular contribution $\frac{1}{2}\ddc f$ is absorbed into the entropy.

The remaining part of the Ricci energy is continuous. 
We define
\begin{equation}\label{equ:Ricci-energy-decomposed}
\enR_{\mathrm{v}}^{\beta+\ddc\hat f}(\f)
:=
\enE_{\mathrm{v}}^{-\Ric(\om)+\beta}(\f)
+
\int_X \hat f\,\MA_{\mathrm{v}}(\f).
\end{equation}
The first term is continuous by \eqref{equ:twenhold}, applied to the smooth form
$\theta=-\Ric(\om)+\beta$. The second term is
$\enE_{\mathrm{v}}^{\ddc\hat f}$, hence is continuous by
\eqref{equ:energy-ddc-finite-continuity}, applied with $u=\hat f$.
Thus $\enR_{\mathrm{v}}^{\beta+\ddc\hat f}$ is continuous on $\cET(X,\om)$.

With the decomposition \eqref{eq:chi-decomposition}, the finite-energy representative of the weighted twisted Mabuchi functional is
\begin{equation}\label{eq:Mabuchi-finite-energy-decomposed}
\cM_{\mathrm{v},\mathrm{w}}^\chi(\f)
:=
\ent_{\mathrm{v}}^f(\f)
+
\enR_{\mathrm{v}}^{\beta+\ddc\hat f}(\f)
+
\enE_{\mathrm{v}\,\mathrm{w}}(\f),
\qquad
\f\in\cET(X,\om).
\end{equation}
By the compatibility above, on $\cH^T(X,\om)$,
\begin{align}
\cM_{\mathrm{v},\mathrm{w}}^\chi(\f)
&=
\ent_{\mathrm{v}}^f(\f)
+
\enR_{\mathrm{v}}^{\beta+\ddc\hat f}(\f)
+
\enE_{\mathrm{v}\,\mathrm{w}}(\f)
\label{eq:form1}\\
&=
\ent_{\mathrm{v}}(\f)
+
\enR_{\mathrm{v}}^\chi(\f)
+
\enE_{\mathrm{v}\,\mathrm{w}}(\f).
\label{eq:form2}
\end{align}

We need the following weighted version of the Berman--Darvas--Lu entropy approximation theorem, allowing the singular reference measure $e^{-f}\omega^n$ and using the solvability and stability theory for the weighted Monge--Amp\`ere equation.

\begin{prop}\label{Ent-approx}
Assume that $f$ is upper semicontinuous on $X$ and that $e^{-f}\in L^1(X,\om^n)$. 
If $\mathrm{v}>0$, then the functional
\[
\cET(X,\om)\ni\f
\longmapsto
\Ent\bigl(\MA_{\mathrm{v}}(\f)\mid e^{-f}\om^n\bigr)
\]
is $d_1$-lower semicontinuous. 
Moreover, for every $\f\in\cET(X,\om)$, there exists a sequence $\f_j\in\cH^T(X,\om)$ such that
\[
d_1(\f_j,\f)\to0
\]
and
\[
\Ent\bigl(\MA_{\mathrm{v}}(\f_j)\mid e^{-f}\om^n\bigr)
\to
\Ent\bigl(\MA_{\mathrm{v}}(\f)\mid e^{-f}\om^n\bigr).
\]
\end{prop}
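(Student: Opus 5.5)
Lower semicontinuity is the easy half. We will use the Donsker--Varadhan (Legendre) formula
\[
\Ent(\mu\mid\nu)=\sup_{g\in C^0(X)}\Bigl(\int_X g\,d\mu-\log\int_X e^{g}\,d\nu\Bigr)+\mu(X)\log\mu(X)-\mu(X)\log\nu(X)\ (\text{suitably normalized}),
\]
valid for positive measures $\mu$ of fixed mass and finite $\nu=e^{-f}\om^n$. The mass is finite because $e^{-f}\in L^1$. By the continuity of $\MA_{\mathrm{v}}$ on $\cET(X,\om)$, for $\mathrm{v}>0$ the map $\f\mapsto\MA_{\mathrm{v}}(\f)$ is weakly continuous, with positive values of fixed total mass $V_{\mathrm{v}}$. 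Each term in the supremum is then $d_1$-continuous. A supremum of continuous functions is lower semicontinuous, which proves the first claim. (One checks that the normalization matches Definition~\ref{defi:relative-entropy} for measures of mass $V_{\mathrm{v}}$; only additive constants appear.)

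For the approximation, fix $\f$ with $\Ent(\MA_{\mathrm{v}}(\f)\mid e^{-f}\om^n)<\infty$; otherwise lower semicontinuity alone gives the claim for any $d_1$-approximating smooth sequence. Write $\MA_{\mathrm{v}}(\f)=h\,\om^n$ with $h e^{f}\log(h e^{f})\in L^1(e^{-f}\om^n)$. We follow Berman--Darvas--Lu and construct smooth positive $T$-invariant densities $h_j$ of total mass $V_{\mathrm{v}}$ such that:
\begin{itemize}
\item $h_j\om^n\to h\om^n$ weakly;
\item $\Ent(h_j\om^n\mid e^{-f}\om^n)\to\Ent(h\om^n\mid e^{-f}\om^n)$;
\item $h_j\om^n$ have uniformly bounded entropy.
\end{itemize}
To build them, we truncate $h e^{f}$ from above and below, since $f$ is usc and bounded above. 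We then regularize $e^{-f}$ by a decreasing sequence of continuous functions, which is possible by upper semicontinuity. Finally we smooth, average over the compact torus $T$ to keep invariance, and renormalize the mass. Entropy convergence follows from convexity of $t\log t$, Jensen's inequality for the smoothing kernel, and dominated or monotone convergence.

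Next we solve the weighted Monge--Amp\`ere equation $\MA_{\mathrm{v}}(\f_j)=h_j\om^n$ with $\f_j\in\cH^T(X,\om)$, normalized for instance by $\sup\f_j=0$ or $\enE(\f_j)=\enE(\f)$. Smooth solvability for positive smooth right-hand sides and $\mathrm{v}>0$ is the weighted Calabi--Yau theorem, available in the cited weighted framework \cite{BJT,AJL} via a continuity method. It remains to show $d_1(\f_j,\f)\to0$. Uniform entropy bounds give compactness of $\{\f_j\}$ in $\cET$ (weighted entropy compactness, using $\mathrm{v}$ bounded above and below so that entropy relative to $\om^n$ controls energy, à la BDL). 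Let $\psi$ be any $d_1$-limit point. By continuity of $\MA_{\mathrm{v}}$ we get $\MA_{\mathrm{v}}(\psi)=h\om^n=\MA_{\mathrm{v}}(\f)$. Uniqueness for the weighted equation in $\cET$ up to constants (a weighted Dinew-type uniqueness; the normalization fixes the constant) gives $\psi=\f$. The entropies along $\f_j$ are exactly those of $h_j$, so they converge.

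The main obstacle is this stability step: compactness from entropy bounds and uniqueness of finite-energy solutions for the \emph{weighted} operator. I would try to reduce both to the variational characterization. In that picture $\f$ maximizes $\enE_{\mathrm{v}}(\psi)-\int_X\psi\,d\mu$, this functional is strictly concave along $d_1$-geodesics, and maximizers are unique up to constants. Combined with \eqref{equ:MAvhold} and \eqref{equ:wenlip}, this should yield $d_1$-convergence of the maximizers $\f_j$ as the measures $\mu_j=h_j\om^n$ converge in the strong (energy) topology, which itself follows from the uniform entropy bound.
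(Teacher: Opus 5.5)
Your outline matches the paper's: weak continuity of $\MA_{\mathrm{v}}$ plus lower semicontinuity of entropy, then a Berman--Darvas--Lu density approximation, the weighted Calabi--Yau theorem, compactness, and uniqueness (the paper takes uniqueness from \cite[Thm.~2.18]{BWN}). The lower semicontinuity half is fine.

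\textbf{The gap.} The compactness/stability step fails.

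Your construction only controls $\Ent(\cdot\mid\nu)$ with $\nu=e^{-f}\om^n$. The compactness you invoke (BDL, via BBEGZ) needs a bound on $\Ent(\cdot\mid\om^n)$, or more generally a reference measure against which sup-normalized $\om$-psh functions are uniformly exponentially integrable. That holds for $\om^n$ by Skoda, and for $e^{-f}\in L^p$ with $p>1$ by H\"older. Neither is available here:
\begin{itemize}
\item For a Poincar\'e-type $\f$, the case the statement is built for, $\MA_{\mathrm{v}}(\f)$ has density $\approx|s_D|^{-2}(\log|s_D|^2)^{-2}$. Its $\om^n$-entropy diverges like $\int_0 dr/(r\log(1/r))$, so by lower semicontinuity the $\om^n$-entropies of any weakly converging approximants blow up.
\item A bound on, or even convergence of, the $\nu$-entropy does not give compactness either.
\end{itemize}

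\textbf{A counterexample.} Take $\rho=-\log|s_D|_h^2$, $f=-\rho+2\log(\lambda+\rho)$ and $\mu=\MA_{\mathrm{v}}(0)$. Let $\sigma_j$ be the restriction of $\nu$ to $\{R_j\le\rho\le 2R_j\}$, normalized to mass $V_{\mathrm{v}}$ (smoothed if you like). Set $\mu_j=(1-\varepsilon_j)\mu+\varepsilon_j\sigma_j$ with $\varepsilon_j=R_j^{-a}$ and $0<a<\frac12$.
\begin{itemize}
\item The annulus has $\nu$-mass $\approx 1/R_j$. By convexity and lower semicontinuity, $\Ent(\mu_j\mid\nu)\to\Ent(\mu\mid\nu)$, since the excess is $O(\varepsilon_j\log R_j)$. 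The densities also converge in $L^1$.
\item The function $\psi_0=-c\rho^a$ lies in $\cET(X,\om)$ for small $c>0$, because $\int\rho^{2a-2}|s_D|^{-2}\om^n<\infty$ when $a<\frac12$.
\item However, $\int\psi_0\,d(\mu_j-\mu)\approx -c\,\varepsilon_j R_j^{a}$, which does not tend to $0$.
\end{itemize}
Any $d_1$-limit of the solutions of $\MA_{\mathrm{v}}(\f_j)=\mu_j$ would have to be $0$ up to a constant, by uniqueness. Then \eqref{equ:MAvhold} would force $\int\psi_0\,d\mu_j\to\int\psi_0\,d\mu$, a contradiction. So there is no $d_1$-convergent subsequence. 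Hence ``$L^1$ plus $\nu$-entropy convergence'' cannot give the conclusion. Your fallback, that strong convergence of $h_j\om^n$ ``follows from the uniform entropy bound'', fails for the same reason.

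\textbf{What is missing.} Strong (energy) convergence of the approximating measures has to be built into the construction, not deduced from entropy. For instance:
\begin{itemize}
\item Truncate $u=d\mu/d\nu$ only from above.
\item Replace $e^{-f}$ by continuous minorants $e^{-f_m}\uparrow e^{-f}$, with $f_m\downarrow f$ continuous. Note that $e^{-f}$ is lower semicontinuous, so the approximation is increasing, not decreasing as you wrote, and that direction is exactly what is needed.
\item Add $\delta\om^n$ instead of truncating from below.
\item Mollify and $T$-average only at the last stage, when the densities are bounded.
\end{itemize}
Every intermediate measure is then $\le C(\mu+\om^n)$. Since $\mu$ has finite energy, this gives uniform integrability of $d_1$-bounded families and hence strong convergence. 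The $\nu$-entropies converge by dominated convergence, using $f\in L^1(\om^n)$.

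You then still need the weighted analogue of the BBEGZ stability: strongly convergent $\MA_{\mathrm{v}}$-measures have $d_1$-convergent normalized potentials. Your variational sketch can supply this, with one correction. $\enE_{\mathrm{v}}$ is affine along weak geodesics, so $\enE_{\mathrm{v}}-\int\cdot\,d\mu$ is only concave, not strictly concave, and uniqueness must come from a theorem such as \cite[Thm.~2.18]{BWN}.

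The paper's own proof follows your outline and cites \cite[Lemma~2.16]{HL} at this point. It establishes only $L^1$ and $\nu$-entropy convergence of the densities there, so by the counterexample the same extra input is needed.
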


\begin{proof}
The argument is a weighted analogue of \cite[Lemma 3.1]{BDL}, with two additional points: the reference measure is the possibly singular measure $e^{-f}\omega^n$, and the recovery sequence is obtained by solving the weighted Monge--Amp\`ere equation. By the weighted Monge--Amp\`ere continuity estimate \eqref{equ:MAvhold} and the fact that the entropy $\mu\mapsto \Ent(\mu \mid e^{-f}\omega^n)$ is lsc on the space of finite measures, with respect to the weak convergence of measures, (cf. \cite[Prop. 3.1]{BB}), it follows that the entropy $\varphi\mapsto \Ent(\MA_{\mathrm{v}}(\varphi) \mid e^{-f}\omega^n)$ is $d_1$-lsc. Let $\varphi\in \cET(X, \omega)$. If $\Ent(\MA_{\mathrm{v}}(\varphi) \mid e^{-f}\omega^n)=+\infty$ then any sequence $\varphi_j\in\mathcal{H}^T(X,\omega)$ such that $d_1(\varphi_j,\varphi)\to 0$ satisfies $\Ent(\MA_{\mathrm{v}}(\varphi) \mid e^{-f}\omega^n)\to +\infty$ as $j\to \infty$. We suppose that $\Ent(\MA_{\mathrm{v}}(\varphi) \mid e^{-f}\omega^n)<+\infty$ and we put $g:=\frac{\MA_{\mathrm{v}}(\varphi)}{\omega^n}\geq 0$ the density function of the measure $\MA_{\mathrm{v}}(\varphi)$. As $e^{-f}\in L^1(X,\om^n),g \log\frac{g}{e^{-f}} \in L^1(X,\om^n)$, by the same construction as in \cite[Lemma 3.1]{BDL}, there exists a sequence of positive functions $g_j\in C^{\infty}_T(X)$ such that $\parallel g-g_j\parallel_{L^{1}}\to 0$ and
\[
\int_X g_j\log \frac{g_j}{e^{-f}}\, \omega^n\to \Ent(\MA_{\mathrm{v}}(\varphi) \mid e^{-f}\omega^n).
\]
Using \cite[Prop. 3.7]{HL}, we can find a smooth potential $\varphi_j\in \mathcal{H}^T(X,\omega)$ (which is unique up to adding a constant) such that $\MA_{\mathrm{v}}(\varphi_j)=\left(\frac{\int_X \mathrm{v}(m_\om)e^{-f}\omega^n}{\int_X g_j e^{-f}\omega^n } \right) g_j e^{-f}\omega^n$. By \cite[Lemma 2.16]{HL}, up to a passing to a subsequence of $\varphi_j$, there exists a $\psi\in \cET(X, \omega)$ such that $d_1(\psi,\varphi_j)\to 0$. The estimate \eqref{equ:MAvhold}, together with $\parallel g-g_j\parallel_{L^{1}}\to 0$, gives $\MA_{\mathrm{v}}(\psi)=\underset{j\to \infty}{\lim} \MA_{\mathrm{v}}(\varphi_j)=\MA_{\mathrm{v}}(\varphi)$. It follows that $\varphi=\psi$ (up to a constant) by \cite[Thm. 2.18]{BWN}. Thus, $d_1(\varphi,\varphi_j)\to 0$ and $\Ent(\MA_{\mathrm{v}}(\varphi_j) \mid e^{-f}\omega^n)\to \Ent(\MA_{\mathrm{v}}(\varphi) \mid e^{-f}\omega^n)$ as $j\to \infty$.
\end{proof}

Combining this entropy approximation with the continuity of the remaining energy terms gives the finite-energy extension of the weighted twisted Mabuchi functional.

\begin{prop}\label{prop:extension-mabuchi}
The weighted twisted Mabuchi functional admits a unique greatest lower semicontinuous extension
\[
\cM_{\mathrm{v},\mathrm{w}}^\chi:\cET(X,\om)\to\R\cup\{+\infty\}.
\]
This extension is given by the decomposed formula
\[
\cM_{\mathrm{v},\mathrm{w}}^\chi(\f)
=
\ent_{\mathrm{v}}^f(\f)
+
\enR_{\mathrm{v}}^{\beta+\ddc\hat f}(\f)
+
\enE_{\mathrm{v}\,\mathrm{w}}(\f).
\]
Moreover, for every $\f\in\cET(X,\om)$, there exists a sequence $\f_j\in\cH^T(X,\om)$ such that
\[
d_1(\f_j,\f)\to0
\]
and
\[
\cM_{\mathrm{v},\mathrm{w}}^\chi(\f_j)
\to
\cM_{\mathrm{v},\mathrm{w}}^\chi(\f).
\]
\end{prop}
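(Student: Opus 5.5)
Write $\widetilde{\cM}$ for the right-hand side of the decomposed formula,
\[
\widetilde{\cM}(\f):=\ent_{\mathrm{v}}^f(\f)+\enR_{\mathrm{v}}^{\beta+\ddc\hat f}(\f)+\enE_{\mathrm{v}\,\mathrm{w}}(\f),\qquad \f\in\cET(X,\om).
\]
The plan is to show three things. First, $\widetilde{\cM}$ agrees with the smooth functional on $\cH^T(X,\om)$. Second, it is $d_1$-lower semicontinuous on $\cET(X,\om)$. Third, every $\f\in\cET(X,\om)$ admits a recovery sequence $\f_j\in\cH^T(X,\om)$ with $d_1(\f_j,\f)\to0$ and $\widetilde{\cM}(\f_j)\to\widetilde{\cM}(\f)$. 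Together these give the proposition.

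Indeed, the greatest lsc extension of a functional $F$ defined on the dense subset $\cH^T$ is
\[
\underline F(\f)=\inf\{\liminf F(\f_j): \f_j\in\cH^T,\ d_1(\f_j,\f)\to0\}.
\]
By the first two points, $\widetilde{\cM}$ is an lsc extension, so $\widetilde{\cM}\le\underline F$ by maximality. The recovery sequence gives the reverse inequality $\underline F\le\widetilde{\cM}$. Uniqueness is then automatic from the definition of the greatest lsc extension.

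For agreement on smooth potentials I will invoke \eqref{eq:form1}--\eqref{eq:form2}, that is, the entropy shift identity \eqref{equ:entropy-shift}. One point needs checking here. For smooth $\f$, the density of $\MA_{\mathrm{v}}(\f)$ is bounded and $\mathrm{v}>0$, so $\ent_{\mathrm{v}}(\f)$ is finite. Moreover, $\int f\,\MA_{\mathrm{v}}(\f)$ is either finite or $-\infty$: $f$ is usc, hence bounded above, and it is quasi-psh-like up to the smooth and finite-energy parts of $\chi$. Since $\chi\ge0$, the function $\tfrac12 f+\hat f$ is quasi-psh, so $f\in L^1$ and both sides of \eqref{equ:entropy-shift} are finite. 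I will remark on this briefly rather than belabor it.

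For lower semicontinuity: $\ent_{\mathrm{v}}^f$ is $d_1$-lsc by Proposition \ref{Ent-approx}. The term $\enR_{\mathrm{v}}^{\beta+\ddc\hat f}$ is $d_1$-continuous by \eqref{equ:twenhold} and \eqref{equ:energy-ddc-finite-continuity}. The term $\enE_{\mathrm{v}\,\mathrm{w}}$ is $d_1$-continuous by \eqref{equ:wenlip} applied with weight $\mathrm{v}\mathrm{w}$. A sum of an lsc functional and continuous ones is lsc, with values in $\R\cup\{+\infty\}$.

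For the recovery sequence, I take the sequence $\f_j\in\cH^T$ supplied by Proposition \ref{Ent-approx}. It converges in $d_1$ and satisfies $\ent_{\mathrm{v}}^f(\f_j)\to\ent_{\mathrm{v}}^f(\f)$, including the case where the limit is $+\infty$. Continuity of the other two terms then yields $\widetilde{\cM}(\f_j)\to\widetilde{\cM}(\f)$.

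The main obstacle is the compatibility step on smooth potentials, namely ensuring that $\int_X f\,\MA_{\mathrm{v}}(\f)$ is finite so that the rearrangement \eqref{equ:entropy-shift} is legitimate. This is where positivity of $\chi$ is used; the rest is formal.
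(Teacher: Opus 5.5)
Your proposal is correct and follows the same route as the paper. The paper justifies the proposition only by the sentence preceding it: the lower semicontinuity and recovery sequence of Proposition~\ref{Ent-approx} are combined with the $d_1$-continuity of $\enR_{\mathrm{v}}^{\beta+\ddc\hat f}$ and $\enE_{\mathrm{v}\,\mathrm{w}}$ and with the smooth compatibility \eqref{eq:form1}--\eqref{eq:form2}. Your explicit identification of the lsc envelope, and your check that $\int_X f\,\MA_{\mathrm{v}}(\f)$ is finite on smooth potentials (via quasi-plurisubharmonicity of $\tfrac12 f+\hat f$, using $\chi\geq0$), fill in details the paper leaves implicit.
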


\begin{remark}\label{rem:decomposed-extension}
The decomposed formula is essential for identifying the greatest lower semicontinuous extension. 
If one extends separately the terms in the smooth identity
\[
\cM_{\mathrm{v},\mathrm{w}}^\chi
=
\ent_{\mathrm{v}}+\enR_{\mathrm{v}}^\chi+\enE_{\mathrm{v}\,\mathrm{w}},
\]
then both $\ent_{\mathrm{v}}$ and the singular part of $\enR_{\mathrm{v}}^\chi$ may contribute lower semicontinuous terms. 
This separated extension need not coincide with the greatest lower semicontinuous extension of the smooth functional. 
The decomposition
\[
\chi=\beta+\frac{1}{2}\ddc f+\ddc\hat f
\]
is used precisely to absorb the singular contribution $\frac{1}{2}\ddc f$ into the reference measure of the entropy, leaving only continuous energy terms outside the entropy.
\end{remark}

\subsection{Futaki invariant and extremal normalization}\label{subsec:futaki}

The weighted Futaki invariant and weighted extremal normalization were introduced by Lahdili in \cite{Lah,Lah23}, extending the classical Calabi and Futaki--Mabuchi constructions \cite{calabi,FM}. 
The relative weighted Mabuchi functional and its coercivity modulo the complexified torus are developed in \cite{AJL,BJT}. 
We recall here the corresponding twisted normalization.

For an affine function $\ell=\xi+c\in\ft\oplus\R$ on $\ft^\vee$, set
\begin{equation}\label{equ:Futaki-twisted}
\mathcal F_{\mathrm{v},\mathrm{w}}^\chi(\ell)
:=
\int_X
\ell(m_\om)\mathrm{w}(m_\om)\MA_{\mathrm{v}}(0)
-
\int_X
\ell(m_\om)S_{\mathrm{v}}^\chi(\om)\MA_{\mathrm{v}}(0).
\end{equation}
If the equation $S_{\mathrm{v}}^\chi(\om_\f)=\mathrm{w}(m_{\om_\f})$ has a smooth solution, then
\[
\mathcal F_{\mathrm{v},\mathrm{w}}^\chi(\ell)=0
\]
for every affine function $\ell$.

When $\mathrm{w}>0$ on $P_X$, affine functions carry the positive definite pairing
\begin{equation}\label{equ:Futaki-pairing}
\langle \ell,\ell'\rangle_{\mathrm{w}}
:=
\int_X
\ell(m_\om)\ell'(m_\om)\,
\mathrm{w}(m_\om)\MA_{\mathrm{v}}(0)
=
\int_X
\ell(m_\om)\ell'(m_\om)\,
\MA_{\mathrm{v}\,\mathrm{w}}(0).
\end{equation}

The weighted twisted extremal affine function
\[
\ell^{\mathrm{ext}}
=
\ell^{\mathrm{ext}}_{\om,\mathrm{v},\mathrm{w},\chi}
\]
is defined as the $\mathrm{w}$-weighted projection of $S_{\mathrm{v}}^\chi(\om)$ onto the space of affine functions on the moment polytope; equivalently, it is the unique affine function satisfying
\begin{equation}\label{equ:ellext-twisted}
\langle \ell,\ell^{\mathrm{ext}}\rangle_{\mathrm{w}}
=
\int_X
\ell(m_\om)S_{\mathrm{v}}^\chi(\om)\MA_{\mathrm{v}}(0)
\end{equation}
for every affine function $\ell$.

With this choice, the Futaki character vanishes for the modified weight $\mathrm{w}\ell^{\mathrm{ext}}$. 
The corresponding relative weighted twisted Mabuchi functional is
\begin{equation}\label{equ:relative-mabuchi}
\cM^{\mathrm{rel}}_{\mathrm{v},\mathrm{w},\chi}
:=
\cM_{\mathrm{v},\mathrm{w}\ell^{\mathrm{ext}}}^{\chi}.
\end{equation}
Its smooth critical points are the $(\mathrm{v},\mathrm{w},\chi)$-extremal metrics.

We measure coercivity modulo the complexified torus action using the relative $d_1$ distance:
\begin{equation}\label{def:relative_dist}
d_{1,T^\mathbb{C}}(\f,0)
:=
\inf_{\sigma\in T^\mathbb{C}}
d_1(\sigma\cdot\f,0).
\end{equation}
\begin{remark}\label{rem:relative_dist_attained}
By \cite[Proposition~1.8]{BJT}, the infimum in \eqref{def:relative_dist} is attained.
\end{remark}
In the vanishing Futaki case, equivalently when $\ell^{\mathrm{ext}}=1$, the weighted twisted Mabuchi energy is said to be coercive relative to $T^\mathbb{C}$ if there exist constants $\delta,C>0$ such that
\begin{equation}\label{def:coercivity}
\cM_{\mathrm{v},\mathrm{w}}^\chi(\f)
\geq
\delta\,d_{1,T^\mathbb{C}}(\f,0)-C
\end{equation}
for all normalized potentials
\[
\f\in
\cET_0(X,\om)
:=
\{\psi\in\cET(X,\om)\mid \enE(\psi)=0\}.
\]

\section{{Convexity of weighted twisted K-energy }}

\subsection{Weak geodesics}
We now prove the convexity theorem. The argument extends the finite-energy convexity method of Berman--Darvas--Lu to the weighted twisted setting, using Lahdili's smooth weighted formalism and the finite-energy weighted Monge--Amp\`ere theory of \cite{AJL,BJT}. The main point is to combine the weighted geodesic convexity formula with the singular twist decomposition \eqref{eq: ChiProp}.

It was shown by Donaldson \cite{Donaldson-geod} and Semmes \cite{Semmes} that by letting $\tau:=e^{-t+is}$, the geodesic $(\f_t)_{t\in[0,1]}\in\cH^T(X, \omega)$ can be viewed as a smooth function $\Psi(x,\tau)$ on $\hat{X}:=X\times\mathbb{A}$, where $\mathbb{A}:=\{e^{-1}\leq|\tau|\leq 1\}$ is the corresponding annulus in $\mathbb{C}$, defined by
\begin{equation}\label{complex}
\Psi(x,\tau):=\f_{t}(x),
\end{equation}
which is invariant under the natural action of $\mathbb{G}:=\T\times\mathbb{S}^1$ on $\hat{X}$, and satisfies the following degenerate Monge-Amp\`ere equation on $\hat{X}$,
\begin{equation*}
\big(\pi_{X}^*\omega+dd^{c}\Psi\big)^{n+1}=0
\end{equation*}
where $\pi_{X}:\hat{X}\to X$ is the projection on the first factor. Hence, the problem of connecting two potentials $\f_0,\f_1\in\cH^T(X, \omega)$ by a geodesic $(\f_t)_{t\in[0,1]}\in\cH^T(X, \omega)$ is equivalent to finding a solution $\Psi\in C^\infty(\hat{X},\mathbb{R})^{\mathbb{G}}$ to the following boundary value problem
\begin{equation}\label{MA}
\begin{cases}
\big(\pi_{X}^*\omega+dd^{c}\Psi\big)^{n+1}=0,\\
\omega+dd^{c}\Psi_{|X_\tau}>0,\text{ for }\tau\in\mathbb{A},\\
\Psi(\cdot,e^{-1})=\f_1\text{ and }\Psi(\cdot,1)=\f_0.
\end{cases}
\end{equation}
where $X_\tau:=\pi_{\mathbb{A}}^{-1}(\tau)$ is a fiber of the projection $\pi_{\mathbb{A}}:\hat{X}\to \mathbb{A}$. 

The boundary value problem \eqref{MA} makes sense for $\mathbb{G}$-invariant bounded plurisubharmonic functions $\Psi\in \PSH(\hat{X},\pi_{X}^{\star}\omega)^{\mathbb{G}}\cap L^{\infty}$, using the Bedford--Taylor interpretation of $\big(\pi_{X}^*\omega+dd^{c}\Psi\big)^{n+1}$ as a Borel measure on $\hat{X}$. 

By a result of Chen \cite{c}, with complements of B\l ocki \cite{Blocki2013} and Chu--Tosatti--Weinkove \cite{CTW18}, the boundary value problem \eqref{MA} admits a unique $\mathbb{G}$-invariant solution $\Psi\in C^{1,1}(\hat{X},\R)$ such that $\pi_{X}^*\omega+dd^{c}\Psi$ is a positive current with bounded coefficients, up to the boundary, corresponding to a family of functions $(\f_t)_{t\in[0,1]}$ in the space $\cH^{1,1}(X,\omega)^T$ of all $T$-invariant functions $\f\in C^{1,1}(X,\R)$ such that $\omega_\f$ is a positive current with bounded coefficients. The curve $(\f_t)_{t\in[0,1]}\subset\cH^{1,1}(X,\omega)^T$ is called the \emph{weak geodesic segment} joining $\f_0,\f_1\in\cH^T(X, \omega)$.

In what follows, we use the notation $\omega^{[n]} := \frac{1}{n!}\omega^n$.

\begin{lemma}\label{lem-ddc-enE}
For a smooth family of $T$-invariant potentials $\f_t \in \cH^T(X, \omega)$ and a positive $T$-invariant current $\chi$, we have:
\[\ddc\enE_{\mathrm{v}}^{\chi}(\Psi)=\int_X \mathrm{v}(m_{\Psi})\pi^{*}_X\chi\wedge(\pi^{*}_X\omega+\ddc\Psi)^{[n]}+\int_X\langle \mathrm{v}'(m_{\Psi}),m_{\chi}\rangle\Big(\pi^{*}_X\omega+\ddc\Psi\Big)^{[n+1]}.\]
\end{lemma}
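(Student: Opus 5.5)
The plan is to prove the formula by direct computation on $\hat X$, treating $\enE_{\mathrm{v}}^\chi(\Psi)$ as the function $\tau\mapsto \enE_{\mathrm{v}}^\chi(\f_t)$ on the annulus $\mathbb{A}$. The integrals $\int_X$ on the right are fibre integrals, i.e. push-forwards by $\pi_{\mathbb A}$. Because $T$ acts on $X$ only, $m_\Psi$ is the fibrewise moment map $m_{\om}+m_{\f_t}$. The fibre integral $\pi_{\mathbb A*}(\pi_X^*\om+\ddc\Psi)^{[n+1]}$ is a $(1,1)$-form on $\mathbb A$, matching the degree of the left-hand side.

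I would first reduce to smooth $\chi$. If $\chi$ is a general positive current, write $\chi$ locally as a limit of smooth closed $T$-invariant forms $\chi_\epsilon=\chi+\ddc u_\epsilon$, obtained by a $T$-averaged regularization. By \eqref{eq:energy-general-current}, both sides are affine in $\chi$ up to exact terms $\int_X u\,\MA_{\mathrm v}(\f_t)$. Both sides are also continuous under weak convergence of $\chi_\epsilon$ once the family $\f_t$ is smooth, since the right side only pairs $\chi$ and $m_\chi$ against smooth forms and smooth functions of $m_\Psi$. So it suffices to treat smooth $\chi$, and then pass to the limit in the sense of currents on $\mathbb A$.

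For smooth $\chi$, I would use the standard first-variation/primitive argument. Consider the $(2n+2)$-form-valued density
\[
\Theta:=\mathrm v(m_\Psi)\,\pi_X^*\chi\wedge(\pi_X^*\om+\ddc\Psi)^{[n]}+\langle\mathrm v'(m_\Psi),m_\chi\rangle(\pi_X^*\om+\ddc\Psi)^{[n+1]},
\]
which is the equivariant form ``$\mathrm v(\Omega^{\mathrm{eq}})\wedge\chi^{\mathrm{eq}}$'' in top degree. One computes $\partial_\tau\bar\partial_\tau$ of $F(\tau):=\enE_{\mathrm v}^\chi(\f_t)$ in two steps. First, \eqref{equ:Evchi-first-var} gives $dF=\pi_{\mathbb A*}\bigl(d_\tau\Psi\cdot \MA^\chi_{\mathrm v}\bigr)$. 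A second differentiation uses the variation of $\MA^\chi_{\mathrm v}(\f)$ in direction $\dot\f$, which by \eqref{equ:twMAddc}--\eqref{equ:twMAsymm} is $\MA^{\ddc\dot\f}$-type and symmetric. Assembling these gives
\[
\ddc F=\pi_{\mathbb A*}\bigl(\ddc_\tau\Psi\,\MA_{\mathrm v}^\chi\bigr)+\text{(mixed terms)}.
\]
The mixed terms reorganize, via expansion of $(\pi_X^*\om+\ddc\Psi)^{n+1}$ into horizontal/vertical parts, into exactly $\pi_{\mathbb A*}\Theta$. An equivalent and cleaner route is to use equivariant closedness: $\Theta$ is the top-degree part of an equivariantly closed form on $\hat X$. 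The pushforward then commutes with $\ddc$, and $\ddc\Psi$ entering through $\pi_X^*\om+\ddc\Psi$ is the only $\tau$-dependence. For the $\mathrm v'$ term, one uses $d m_\Psi^\xi=-i(\xi)(\pi_X^*\om+\ddc\Psi)$ on $\hat X$. This holds because $\ddc\Psi$ is $T$-invariant and $m_\Psi=\dc\Psi(\xi)$ by \eqref{equ:mddc}.

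The main obstacle is the bookkeeping in the second variation: the terms where $\ddc_\tau$ hits $\mathrm v(m_\Psi)$ must produce exactly the $\langle\mathrm v',m_\chi\rangle$ term with the top power $[n+1]$. I would first check this for polynomial $\mathrm v$, say $\mathrm v=\langle\cdot,\xi\rangle^k$. There, integration by parts on the fibres as in \eqref{equ:twMAddc}, with $i(\xi)\chi=-dm_\chi^\xi$, shows that $d m_\Psi$-terms trade for $m_\chi$-terms. The general smooth $\mathrm v$ then follows by uniform approximation on a neighbourhood of $P_X$, since $m_\Psi$ takes values in $P_X$.
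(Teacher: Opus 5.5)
This is essentially the paper's proof: you write $\chi$ as a smooth form plus $\ddc$ of a ($T$-invariant) potential, approximate that potential by smooth ones, apply the smooth-twist identity, and pass to the limit in the sense of currents, using that both sides involve $\chi$ and $m_\chi$ only through pairings with smooth data of the smooth family $\Psi$. The one difference is that the paper takes the smooth-twist case directly from Lahdili's Lemma 6, while you re-derive it by the second-variation computation you sketch; if you reduce to polynomial weights there, you need $C^1$ rather than merely uniform approximation of $\mathrm v$ near $P_X$, since $\mathrm v'$ enters both sides.
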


\begin{proof}
When $\chi$ is smooth, the formula follows from \cite{Lah}, Lemma 6.

For a general positive current $\chi$, we decompose it as $\chi = \chi_0 + \ddc f$ where $\chi_0$ is smooth and $f$ is $\chi$-PSH. By linearity, it suffices to establish the formula for the singular part $\ddc f$.
Since $f$ is upper semicontinous, there exists $(f_i)$ a sequence of smooth functions on $X$ such that $f_i \searrow f$.
\[\ddc\enE_{\mathrm{v}}^{\ddc f_i}(\Psi)=\int_X \mathrm{v}(m_{\Psi})\pi^{*}_X(\ddc f_i)\wedge(\pi^{*}_X\omega+\ddc\Psi)^{[n]}+\int_X\langle \mathrm{v}'(m_{\Psi}),m_{f_i}\rangle\Big(\pi^{*}_X\omega+\ddc\Psi\Big)^{[n+1]}.\]

As $i \to \infty$, we have $m_{f_i} \to m_f$ in the sense of currents, where $m_{f_i}^\xi = \dc f_i(\xi)$ and $m_f^\xi = \dc f(\xi)$. Both terms in the formula converge:
\begin{itemize}
\item The first term converges by weak convergence of the currents $\ddc f_i \to \ddc f$;
\item The second term converges since $\langle \mathrm{v}'(m_{\Psi}),m_{f_i}\rangle \to \langle \mathrm{v}'(m_{\Psi}),m_f\rangle$ weakly as currents.
\end{itemize}
Taking the limit as $i \to \infty$ yields:
\[\ddc\enE_{\mathrm{v}}^{\ddc f}(\Psi)=\int_X \mathrm{v}(m_{\Psi})\pi^{*}_X(\ddc f)\wedge(\pi^{*}_X\omega+\ddc\Psi)^{[n]}+\int_X\langle \mathrm{v}'(m_{\Psi}),m_f\rangle\Big(\pi^{*}_X\omega+\ddc\Psi\Big)^{[n+1]}.\]
Combining the smooth and singular parts completes the proof.
\end{proof}

 We consider the following family of elliptic boundary value problems with parameter $\varepsilon>0$,
\begin{equation}\label{MA-epsilon}
\begin{cases}
\big(\pi_{X}^*\omega+dd^{c}\Psi^{\varepsilon}\big)^{n+1}=\varepsilon\big(\pi_{X}^*\omega+\frac{\sqrt{-1}d\tau\wedge d\bar\tau}{2|\tau|^{2}}\big)^{n+1},\\
\Psi^{\varepsilon}(\cdot,e^{-1})=\f_1\text{ and }\Psi^{\varepsilon}(\cdot,1)=\f_0.
\end{cases}
\end{equation}
Solutions $\Psi^\varepsilon\in\cH^T(\hat{X},\pi^{*}_X\omega)^{\mathbb{G}}$ of \eqref{MA-epsilon} are always smooth and approximate uniformly the weak solution $\Psi$ of the boundary value problem \eqref{MA}. More precisely, $\Psi^{\varepsilon}$ is decreasing in $\varepsilon$ and converges to the solution $\Psi$ in the weak $C^{1,1}$ topology as $\varepsilon\to 0$. The family of $T$-invariant K\"ahler potentials $(\f^{\varepsilon}_t)_{t\in[0,1]}\subset\cH^T(X, \omega)$ is called an $\varepsilon$-\emph{geodesic}.

\begin{lemma}\label{lem-eq-ddc-func}
Let $(\varphi_t)_{t\in[0,1]} \subset \cH^{1,1}(X,\omega)^T$ be a weak geodesic segment with endpoints $\varphi_0, \varphi_1 \in \cH^T(X, \omega)$. Let $\Psi$ denote the corresponding $\mathbb{G}$-invariant solution of the boundary value problem \eqref{MA} on $\hat{X}$, and let $\chi$ be a positive $T$-invariant current. Then the following identities hold in the weak sense of currents:
\begin{align}
\ddc\enE_{\mathrm{v}}(\Psi)=&0,\label{eq-a}\\
\ddc\enE_{\mathrm{v}}^{\chi}(\Psi)=&\int_X \mathrm{v}(m_{\Psi})\pi^{*}_X\chi\wedge(\pi^{*}_X\omega+\ddc\Psi)^{[n]}.\label{eq-b}
\end{align}
\end{lemma}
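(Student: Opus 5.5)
We prove both identities by working with the $\varepsilon$-geodesics $\Psi^\varepsilon$ of \eqref{MA-epsilon}, which are smooth, and then letting $\varepsilon\to0$. The input is the general formula of Lemma \ref{lem-ddc-enE}, written for a smooth $\mathbb{G}$-invariant family. For $\Psi^\varepsilon$ it reads
\[
\ddc\enE_{\mathrm{v}}^{\chi}(\Psi^\varepsilon)=\int_X \mathrm{v}(m_{\Psi^\varepsilon})\pi^{*}_X\chi\wedge(\pi^{*}_X\omega+\ddc\Psi^\varepsilon)^{[n]}+\int_X\langle \mathrm{v}'(m_{\Psi^\varepsilon}),m_{\chi}\rangle\bigl(\pi^{*}_X\omega+\ddc\Psi^\varepsilon\bigr)^{[n+1]}.
\]
For $\enE_{\mathrm{v}}$ itself we use the analogous untwisted identity from \cite[Lemma 6]{Lah}, which is the case "$\chi=\omega$-part" of the same fibre-integration computation:
\[
\ddc\enE_{\mathrm{v}}(\Psi^\varepsilon)=\int_X \mathrm{v}(m_{\Psi^\varepsilon})\bigl(\pi^{*}_X\omega+\ddc\Psi^\varepsilon\bigr)^{[n+1]}.
\]
Here $\int_X$ denotes push-forward to $\mathbb{A}$.

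Next we insert the equation \eqref{MA-epsilon}. The top-degree measure $(\pi_X^*\omega+\ddc\Psi^\varepsilon)^{n+1}$ equals $\varepsilon(\pi_X^*\omega+\frac{\sqrt{-1}d\tau\wedge d\bar\tau}{2|\tau|^2})^{n+1}$. This is $\varepsilon$ times a fixed smooth volume form on $\hat X$, so every term containing it is $O(\varepsilon)$ as a measure on $\mathbb{A}$. The coefficients $\mathrm{v}(m_{\Psi^\varepsilon})$ and $\mathrm{v}'(m_{\Psi^\varepsilon})$ are uniformly bounded, because the moment maps $m_{\omega_{\f^\varepsilon_t}}$ take values in the fixed polytope $P_X$ and $\mathrm{v}$ is smooth near $P_X$.

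For the twisted term we need the weak pairing $\langle \mathrm{v}'(m_{\Psi^\varepsilon}),m_\chi\rangle$ against an $O(\varepsilon)$ smooth measure to tend to $0$. When $\chi$ is smooth this is clear. In general we write $\chi=\chi_0+\ddc f$ as in the proof of Lemma \ref{lem-ddc-enE}. Then $m_f^\xi=\dc f(\xi)$, and integrating by parts on each fibre moves the derivative onto the smooth density. This gives a bound of the form $C\varepsilon\|f\|_{L^1}$, where $C$ is controlled by $C^{1,1}$-type bounds on $\Psi^\varepsilon$, which are uniform in $\varepsilon$ (Chen, Błocki, Chu--Tosatti--Weinkove). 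Hence
\[
\ddc\enE_{\mathrm{v}}(\Psi^\varepsilon)=O(\varepsilon),\qquad
\ddc\enE_{\mathrm{v}}^{\chi}(\Psi^\varepsilon)=\int_X \mathrm{v}(m_{\Psi^\varepsilon})\pi^{*}_X\chi\wedge(\pi^{*}_X\omega+\ddc\Psi^\varepsilon)^{[n]}+O(\varepsilon).
\]

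Finally we pass to the limit $\varepsilon\to0$. On the left, $\Psi^\varepsilon$ decreases to $\Psi$ uniformly with bounded Laplacian. The energies $\enE_{\mathrm{v}}(\f^\varepsilon_t)$ and $\enE_{\mathrm{v}}^\chi(\f^\varepsilon_t)$ converge pointwise in $\tau$ with uniform bounds, by \eqref{equ:wenlip}, \eqref{equ:twenhold} and \eqref{equ:energy-ddc-finite-continuity}, since $d_1(\f^\varepsilon_t,\f_t)\to0$. Hence their $\ddc$ in $\tau$ converge as currents. On the right, the first term converges weakly by Bedford--Taylor continuity of the mixed products $\pi_X^*\chi\wedge(\pi_X^*\omega+\ddc\Psi^\varepsilon)^n$ under uniform decreasing convergence. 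The factor $\mathrm{v}(m_{\Psi^\varepsilon})$ converges because $d\Psi^\varepsilon\to d\Psi$ uniformly, which follows from uniform $C^{1,1}$ bounds and Arzelà--Ascoli. For singular $\chi$ we use the decomposition again: $\chi_0$ is handled directly, and for $\ddc f$ we use $f_i\searrow f$ and a diagonal argument.

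The main obstacle is this limit for singular $\chi$. There the measure $\pi_X^*\ddc f\wedge(\cdot)^n$ is paired with the merely continuous coefficient $\mathrm{v}(m_{\Psi^\varepsilon})$, and the $O(\varepsilon)$ estimate for the $m_f$ term needs the fibrewise integration by parts. We would first handle $\chi$ smooth, where everything is classical, and then reduce the singular case to it using $f_i\searrow f$ and monotone convergence, as in Lemma \ref{lem-ddc-enE}.
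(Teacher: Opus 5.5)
Your proposal follows the paper's proof essentially step for step: pass to the $\varepsilon$-geodesics $\Psi^\varepsilon$, apply Lemma~\ref{lem-ddc-enE}, use \eqref{MA-epsilon} and the uniform $C^{1,1}$ bounds to make the $O(\varepsilon)$ terms vanish, and conclude by Bedford--Taylor continuity of the mixed products. Your handling of the $m_f$ term and of singular $\chi$ is more careful than what the paper writes.

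Two harmless corrections. First, as $\varepsilon\to0$ the $\Psi^\varepsilon$ increase (not decrease) to $\Psi$, which is irrelevant because the convergence is uniform. Second, the $d_1$-estimates \eqref{equ:twenhold} and \eqref{equ:energy-ddc-finite-continuity} do not cover the $\ddc f$ part of a general singular $\chi$, since $f\notin\cET$ in general. The convergence of $\int_X f\,\MA_{\mathrm{v}}(\f^\varepsilon_t)$ still holds: the uniform bound $\omega_{\f^\varepsilon_t}\le C\omega$ makes the densities of $\MA_{\mathrm{v}}(\f^\varepsilon_t)$ converge weak-$*$ in $L^\infty$, and $f\in L^1$.
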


\begin{proof}

    Let $(\f^{\varepsilon}_t)_{t\in [0,1]}$ be the $\varepsilon$-geodesic approximating $(\f_t)_{t\in [0,1]}$ and $\Psi^{\varepsilon}$ the corresponding solution of the elliptic Dirichlet problem. By Lemma \ref{lem-ddc-enE}, we have
\begin{align}
\begin{split}\label{eq-ddd}
\ddc\enE_{\mathrm{v}}(\Psi^{\varepsilon})=&\int_X \varepsilon \mathrm{v}(m_{\Psi^{\varepsilon}})\Big(\pi^{*}_X\omega+\frac{\sqrt{-1}d\tau\wedge d\bar\tau}{2|\tau|^{2}}\Big)^{[n+1]},\\
\ddc\enE_{\mathrm{v}}^{\chi}(\Psi^{\varepsilon})=&\int_X \mathrm{v}(m_{\Psi^\varepsilon})\pi^{*}_X\chi\wedge(\pi^{*}_X\omega+\ddc\Psi^\varepsilon)^{[n]}+\varepsilon\langle \mathrm{v}'(m_{\Psi^\varepsilon}),m_{\chi}\rangle\Big(\pi^{*}_X\omega+\frac{\sqrt{-1}d\tau\wedge d\bar\tau}{2|\tau|^{2}}\Big)^{[n+1]}.
\end{split}
\end{align}
We have $\Psi^{\varepsilon}\rightarrow \Psi$ in $(C^{1,1},\|\cdot\|_{C^1}+\|\ddc\cdot\|_{L^{\infty}})$ as $\varepsilon\to 0$. Using the identity
$$m_{\f^{\varepsilon}_t}=m_{\om}+m_{\ddc \f^{\varepsilon}_t}.$$ 
and the fact that $\mathrm{v}$ is smooth on $P$, we obtain
$$\enE_{\mathrm{v}}(\Psi^{\varepsilon})\rightarrow \enE_{\mathrm{v}}(\Psi)\text{ and }\enE_{\mathrm{v}}^{\chi}(\Psi^{\varepsilon})\rightarrow \enE_{\mathrm{v}}^{\chi}(\Psi),$$
since the Monge--Amp\`ere measures converge weakly under decreasing limits. It follows that 
$$\ddc\enE_{\mathrm{v}}(\Psi^{\varepsilon})\rightarrow \ddc\enE_{\mathrm{v}}(\Psi)\text{ and }\ddc\enE_{\mathrm{v}}^{\chi}(\Psi^{\varepsilon})\rightarrow \ddc\enE_{\mathrm{v}}^{\chi}(\Psi),$$
in the weak sense of distributions. 
Passing to the limit as $\varepsilon \to 0$ in \eqref{eq-ddd}:
\begin{itemize}
    \item The terms with the factor $\varepsilon$ vanish because $(\pi^{*}_X\omega+\ddc\Psi^\varepsilon)^{[n+1]}$ is uniformly bounded (due to the $C^{1,1}$ bound).
    \item The term $\pi^{*}_X\chi\wedge(\pi^{*}_X\omega+\ddc\Psi^\varepsilon)^{[n]}$ converges weakly to $\pi^{*}_X\chi\wedge(\pi^{*}_X\omega+\ddc\Psi)^{[n]}$ by the continuity of the mixed Monge--Amp\`ere operator for decreasing sequences of plurisubharmonic functions (note that $\Psi$ is $C^{1,1}$, so the wedge product is well-defined).
\end{itemize}
This yields \eqref{eq-a} and \eqref{eq-b}.

\end{proof}

\begin{lemma}\label{lem:divisorconvexsmooth}
Let $(\f_t)_{t\in[0,1]} \subset \cH^{1,1}(X,\omega)^T$ be a weak geodesic segment with endpoints $\varphi_0, \varphi_1 \in \cH^T(X, \omega)$ and $\Psi$ the corresponding $\mathbb{G}$-invariant solution of the boundary value problem \eqref{MA} on $\hat{X}$. Then the function $t \mapsto \enE_{\mathrm{v}}^{\chi}(\varphi_t)$ is convex.
\end{lemma}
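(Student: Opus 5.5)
The plan is to deduce convexity from the current identity \eqref{eq-b} of Lemma~\ref{lem-eq-ddc-func}, together with the standard correspondence between subharmonicity in $\tau$ and convexity in $t$. Set $F(\tau):=\enE_{\mathrm{v}}^{\chi}(\Psi(\cdot,\tau))=\enE_{\mathrm{v}}^{\chi}(\f_t)$ with $t=-\log|\tau|$. Since $\Psi$ is $\mathbb{S}^1$-invariant, $F$ depends only on $|\tau|$. For such radial functions on the annulus $\mathbb{A}$, one has
\[
\ddc F=\frac{1}{2}\,\frac{d^2}{dt^2}F\; \frac{\sqrt{-1}d\tau\wedge d\bar\tau}{2|\tau|^2}
\]
up to a positive normalizing constant, in the sense of distributions. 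Convexity of $t\mapsto \enE_{\mathrm{v}}^{\chi}(\f_t)$ on $[0,1]$ is therefore equivalent to $\ddc F\geq 0$ as a current on the interior of $\mathbb{A}$, together with continuity of $F$ up to the boundary. Continuity follows because $t\mapsto\f_t$ is $C^{1,1}$, hence $d_1$-continuous, and $\enE^\chi_{\mathrm{v}}$ is continuous along such paths by the estimates of Subsection~\ref{subsec:finite-energy-extension}.

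By \eqref{eq-b},
\[
\ddc F=(\pi_{\mathbb{A}})_*\Big(\mathrm{v}(m_{\Psi})\,\pi_X^*\chi\wedge(\pi_X^*\omega+\ddc\Psi)^{[n]}\Big).
\]
Here the fibre integral is understood as a pushforward of an $(n+1,n+1)$-current on $\hat X$ to a $(1,1)$-current on $\mathbb{A}$. Positivity of this pushforward reduces to positivity of the current inside the integral, which I will check factor by factor. First, $\chi$ is positive, so $\pi_X^*\chi$ is a positive $(1,1)$-current on $\hat X$. Second, $\Psi$ is $\pi_X^*\omega$-psh and $C^{1,1}$, so $\pi_X^*\omega+\ddc\Psi$ is a positive closed current with bounded coefficients. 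Its $n$-th power is therefore a well-defined positive $(n,n)$-form with $L^\infty$ coefficients, and wedging it with the positive current $\pi_X^*\chi$ makes sense and is positive, for instance via Bedford--Taylor with $\chi$ written locally as $\ddc$ of a psh function. Third, $m_{\Psi}$ takes values in $P_X$ and $\mathrm{v}>0$ there, so the weight is a positive bounded function. The product is a positive measure-valued current, and pushforward under the proper holomorphic map $\pi_{\mathbb{A}}$ preserves positivity. Hence $\ddc F\geq 0$, so $F$ is subharmonic and radial, and thus convex in $t=-\log|\tau|$.

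The step I expect to be the main obstacle is justifying \eqref{eq-b} as an honest positive current identity when $\chi$ is singular. The term $\langle \mathrm{v}'(m_\Psi),m_\chi\rangle$ was dropped in the limit $\varepsilon\to0$, and $m_\chi$ is only distributional. I would handle this by approximation. Fix a decomposition $\chi=\chi_0+\ddc g$ with $\chi_0$ smooth, and take smooth $g_i\searrow g$ for which $\chi_i:=\chi_0+\ddc g_i\geq -\epsilon_i\omega$ with $\epsilon_i\to0$, using Demailly regularization, averaged over $T$ to keep invariance. For each smooth $\chi_i$, the argument above gives that $t\mapsto \enE_{\mathrm{v}}^{\chi_i}(\f_t)+C\epsilon_i t^2$ is convex, where the correction term absorbs the negative part $-\epsilon_i\,\mathrm{v}(m_\Psi)\pi_X^*\omega\wedge(\pi_X^*\omega+\ddc\Psi)^{[n]}$, whose mass is bounded by the $C^{1,1}$ bound on $\Psi$. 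Since $\enE_{\mathrm{v}}^{\chi_i}-\enE_{\mathrm{v}}^{\chi_0}=\int g_i\MA_{\mathrm{v}}(\f_t)$ decreases to $\int g\,\MA_{\mathrm{v}}(\f_t)$ by monotone convergence, pointwise limits of these convex functions give convexity of $t\mapsto\enE_{\mathrm{v}}^{\chi}(\f_t)$. Convexity is preserved under pointwise limits, and the limit is finite because $\MA_{\mathrm{v}}(\f_t)$ has bounded density.
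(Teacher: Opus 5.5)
Your first two paragraphs are the paper's proof. The paper also reads off $\frac{d^2}{dt^2}\enE_{\mathrm{v}}^{\chi}(\f_t)\ge 0$ from \eqref{eq-b}, using $\mathrm{v}>0$ on $P_X$, the positivity of $\pi_X^*\chi$ and of $\pi_X^*\omega+\ddc\Psi$, and the radial form of $\ddc$ in $\tau=e^{-t+is}$. Since \eqref{eq-b} is already established in Lemma~\ref{lem-eq-ddc-func} for positive $T$-invariant currents, that part is complete as it stands.

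\textbf{The regularization in your third paragraph fails for the currents the paper is about.}
\begin{itemize}
\item Smooth $g_i$ with $\chi_0+\ddc g_i\ge-\epsilon_i\omega$ and $\epsilon_i\to0$ can exist only if the class $[\chi]$ is nef; this is precisely the definition of nefness.
\item Demailly's theorem does not give this in general. Where $\chi$ has positive Lelong numbers, as $\chi=2\pi[D]$ does along $D$, its loss of positivity does not tend to zero.
\item Concrete counterexample: let $D=E$ be a torus-invariant exceptional curve on a toric surface. Then $\int_E\chi_i=2\pi\,E\cdot E<0$, which contradicts $\chi_i\ge-\epsilon_i\omega$ once $\epsilon_i$ is small.
\end{itemize}
So you cannot pass convexity, which is an inequality, to the limit along such approximants when $D$ has a non-nef component.

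\textbf{The fix is the paper's route: approximate inside the linear identity, not the inequality.} In Lemmas~\ref{lem-ddc-enE} and~\ref{lem-eq-ddc-func}, the smooth approximants $f_i\searrow f$ need no positivity at all. The identity for $\ddc\enE_{\mathrm{v}}^{\chi}(\Psi)$ passes to the limit. The term $\varepsilon\langle\mathrm{v}'(m_{\Psi^\varepsilon}),m_\chi\rangle$ you were worried about disappears because of the factor $\varepsilon$, since $m_\chi$ is integrable (the gradient of a quasi-psh function is integrable). Positivity is only used for the limit current $\chi$ itself.

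\textbf{A minor point on endpoint continuity.} Continuity of $t\mapsto\enE_{\mathrm{v}}^{\chi}(\f_t)$ at the endpoints does not follow from the $d_1$-estimates of Subsection~\ref{subsec:finite-energy-extension}. Those estimates do not control the term $\frac12\int_X f\,\MA_{\mathrm{v}}(\f_t)$. Continuity follows instead from the uniform $L^\infty$ bound on the densities of $\MA_{\mathrm{v}}(\f_t)$, together with $f\in L^1$.
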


\begin{proof}
By Lemma \ref{lem-eq-ddc-func}, we have:
\begin{align}
\ddc \enE_{\mathrm{v}}^{\chi}(\varphi_\tau) = \int_X \mathrm{v}(m_\Psi)\pi_X^*{\chi} \wedge (\pi_X^* \omega + dd^c \Psi)^{n}.
\end{align}
Since $\varphi_t$ is a subgeodesic, we have $\pi^* \omega + dd^c \Psi \geq 0$ on $\hat{X}$. Moreover, since $\mathrm{v} > 0$ on the moment polytope $P$ and ${\chi}$ is a positive $T$-invariant current, the integrand defines a positive current on $\mathbb{A}$. 

In the coordinates $\tau = e^{-t+is}$, this translates to:
\begin{align}
\frac{d^2}{dt^2} \enE_{\mathrm{v}}^{\chi}(\varphi_t) \geq 0,
\end{align}
establishing the convexity of $t \mapsto \enE_{\mathrm{v}}^{\chi}(\varphi_t)$.
\end{proof}

\begin{prop}\label{prop:extention-K-energy}
Let $(\varphi_t)_{t\in[0,1]} \subset \cH^{1,1}(X,\omega)^T$ be a weak geodesic segment with endpoints $\varphi_0, \varphi_1 \in \cH^T(X, \omega)$. Then the weighted twisted K-energy, decomposed as
\[
\cM_{\mathrm{v},\mathrm{w}}^{\chi}(\varphi_t) = \cM_{\mathrm{v},\mathrm{w}}(\varphi_t) + \enE_{\mathrm{v}}^{\chi}(\varphi_t),
\]
defines a convex function of $t$.
\end{prop}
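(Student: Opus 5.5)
The plan is to split the functional as in the statement, $\cM_{\mathrm{v},\mathrm{w}}^{\chi}=\cM_{\mathrm{v},\mathrm{w}}+\enE_{\mathrm{v}}^{\chi}$, where $\cM_{\mathrm{v},\mathrm{w}}=\cM^{0}_{\mathrm{v},\mathrm{w}}$ is the untwisted weighted Mabuchi functional. This splitting holds because $\enR_{\mathrm{v}}^{\chi}=\enE_{\mathrm{v}}^{-\Ric(\om)}+\enE_{\mathrm{v}}^{\chi}$ by linearity of $\chi\mapsto\enE_{\mathrm{v}}^{\chi}$ (up to additive constants). Convexity of a sum of convex functions then reduces the claim to two separate statements along the weak geodesic $(\f_t)$ joining smooth endpoints.

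For the twisted part, I would invoke Lemma \ref{lem:divisorconvexsmooth} directly. It says that $t\mapsto\enE_{\mathrm{v}}^{\chi}(\f_t)$ is convex, since its $\ddc$ on $\mathbb{A}$ is the positive current $\int_X\mathrm{v}(m_\Psi)\pi_X^*\chi\wedge(\pi_X^*\om+\ddc\Psi)^{[n]}$. The inputs are $\mathrm{v}>0$ on $P_X$ and $\chi\ge0$. The weighted term $\langle\mathrm{v}'(m_\Psi),m_\chi\rangle(\pi_X^*\om+\ddc\Psi)^{[n+1]}$ drops out by Lemma \ref{lem-eq-ddc-func}.

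For the untwisted part, I would prove convexity of $t\mapsto\cM_{\mathrm{v},\mathrm{w}}(\f_t)$ along $C^{1,1}$ weak geodesics with smooth endpoints. This is the weighted Berman--Berndtsson theorem, and I would follow the argument of \cite{BB} in the weighted form used in \cite{Lah,AJL}.

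\begin{itemize}
\item The term $\enE_{\mathrm{v}\mathrm{w}}$ is affine in $t$ by \eqref{eq-a} applied with weight $\mathrm{v}\mathrm{w}$. That identity uses only smoothness of the weight, not its sign.
\item The combination $\ent_{\mathrm{v}}+\enE_{\mathrm{v}}^{-\Ric(\om)}$ is handled on $\hat X$. Writing $\MA_{\mathrm{v}}(\f_t)=e^{-2\rho_t}\om^n$, one has $\ent_{\mathrm{v}}(\f_t)=-\int_X\rho_t\,\MA_{\mathrm{v}}(\f_t)$. The function $\rho(x,\tau)=\rho_t(x)$ defines a relative weighted volume form on $\hat X\to\mathbb A$.
\item One computes $\ddc_\tau$ of $-\int_X\rho\,\MA_{\mathrm{v}}(\Psi)$ plus the Ricci energy. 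This gives $\int_X\mathrm{v}(m_\Psi)\,\Ric_{\mathrm{v}}\bigl(\text{fibrewise}\bigr)\wedge(\pi_X^*\om+\ddc\Psi)^{[n]}$, where the weighted relative Ricci current is $-\tfrac12\ddc\log(\mathrm{v}(m_\Psi)\,\om_\Psi^n\wedge i\,d\tau\wedge d\bar\tau)$.
\item Berndtsson's positivity of direct images (equivalently, subharmonicity of $\tau\mapsto\log$ of the Bergman-type fibre volume in \cite{BB}) shows that this current is $\ge0$. The point is that $\mathrm{v}(m_\Psi)$ only multiplies the fibre volume form by a $T$-invariant factor, which is absorbed into the relative canonical bundle metric.
\end{itemize}

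The main obstacle is the last step. Weak geodesics are only $C^{1,1}$, so $\om_{\f_t}^n$ may degenerate and $\rho$ is not a priori bounded. I would not redo the regularity analysis. Instead I would appeal to the approximation route of \cite{BB}: compute along the smooth $\varepsilon$-geodesics $\Psi^\varepsilon$ of \eqref{MA-epsilon}, where the error terms carry a factor $\varepsilon$ as in the proof of Lemma \ref{lem-eq-ddc-func}. I would then pass to the limit, using the $d_1$-lower semicontinuity of the entropy (Proposition \ref{Ent-approx} with $f=0$) together with continuity at the endpoints, which are smooth. Convexity on $[0,1]$ then follows, since a convex combination of subsolutions, or equivalently a weak subharmonicity in $\tau$, passes to this limit. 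If that limit argument is delicate, my fallback is to cite the weighted convexity theorem of \cite{AJL} for $\cM_{\mathrm{v},\mathrm{w}}$ along $C^{1,1}$ geodesics directly, and add Lemma \ref{lem:divisorconvexsmooth}.
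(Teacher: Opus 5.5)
Your fallback is exactly the paper's proof: split off $\enE_{\mathrm{v}}^{\chi}$, get its convexity from Lemma~\ref{lem:divisorconvexsmooth}, and cite \cite[Theorem~6.1]{AJL} for convexity of $\cM_{\mathrm{v},\mathrm{w}}$ along weak geodesics with smooth endpoints. Drop your primary self-contained route: along the $\varepsilon$-geodesics only the energy terms have $O(\varepsilon)$ errors, while the entropy produces $\varepsilon$ times an integral of the (weighted) scalar curvature of $\omega_{\f^\varepsilon_t}$, a fourth-order quantity with no uniform bound as $\varepsilon\to0$ (which is why \cite{BB} instead proves a local positivity property of $C^{1,1}$ solutions to the homogeneous Monge--Amp\`ere equation via Bergman kernels); moreover, lower semicontinuity alone cannot give the chord inequality on subintervals whose endpoints are only $C^{1,1}$.
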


\begin{proof}
By \cite[Theorem 6.1.]{AJL}, the weighted K-energy $\cM_{\mathrm{v},\mathrm{w}}(\f_t)$ is convex. By Lemma \ref{lem:divisorconvexsmooth}, $\enE_{\mathrm{v}}^{\chi}(\f_t)$ is convex. Since the sum of convex functions is convex, the result follows.
\end{proof}

\subsection{Convexity of weighted twisted K-energy}

As an application of the convexity of the twisted energy functional $\enE_{\mathrm{v}}^{\chi}$, we establish the convexity of the weighted twisted Mabuchi energy $\cM_{\mathrm{v},\mathrm{w}}^{\chi}$ along weak geodesics in the finite energy space $\cET(X, \omega)$.

\begin{thm}[see Theorem \ref{thm:main0}]\label{thm:main}
Let $(X,\omega)$ be a compact connected K\"ahler manifold and let $\chi$ be a positive current satisfying \eqref{eq: ChiProp}, and let $T\subset\Aut_{red}(X,\chi)$ be a compact torus. For smooth weights $\mathrm{v},\mathrm{w}\in C^\infty(\mathfrak{t}^\vee)$ with $\mathrm{v}>0$ on the moment polytope $P_X$, the weighted twisted Mabuchi functional
\begin{equation}
\cM_{\mathrm{v},\mathrm{w}}^{\chi}(\varphi):= 
\ent_{\mathrm{v}}(\varphi)
+  
\enR_{\mathrm{v}}^{\chi}(\varphi)+\enE_{\mathrm{v}\,\mathrm{w}}(\varphi)
\end{equation}
admits a unique greatest lower semicontinuous extension $\cM_{\mathrm{v},\mathrm{w}}^{\chi}: \cET(X, \omega) \to \R\cup \{+\infty\}$, that is moreover convex and continuous along weak geodesics.
\end{thm}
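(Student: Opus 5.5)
The existence and uniqueness of the greatest lower semicontinuous extension is already Proposition \ref{prop:extension-mabuchi}. The plan is therefore to follow the Berman--Darvas--Lu strategy: reduce finite-energy convexity to convexity along weak geodesics with smooth endpoints, then pass to the limit using the entropy approximation of Proposition \ref{Ent-approx}.

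\emph{Step 1: smooth endpoints.} Let $\varphi_0,\varphi_1\in\cH^T(X,\omega)$ and let $(\varphi_t)$ be the weak $C^{1,1}$ geodesic joining them. Write the extended functional in its decomposed form \eqref{eq:Mabuchi-finite-energy-decomposed}:
\[
\cM_{\mathrm{v},\mathrm{w}}^\chi=\ent_{\mathrm{v}}^f+\enR_{\mathrm{v}}^{\beta+\ddc\hat f}+\enE_{\mathrm{v}\,\mathrm{w}}.
\]
By \eqref{equ:entropy-shift} this equals $\cM_{\mathrm{v},\mathrm{w}}^{0}+\enE_{\mathrm{v}}^{\chi}$ along $\varphi_t$, and the identity holds for $C^{1,1}$ potentials, since $\MA_{\mathrm{v}}(\varphi_t)$ has a bounded density there and all terms are finite. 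The term $\cM_{\mathrm{v},\mathrm{w}}^{0}(\varphi_t)$ is convex by \cite[Theorem 6.1]{AJL}. The term $\enE_{\mathrm{v}}^{\chi}(\varphi_t)$ is convex by Lemma \ref{lem:divisorconvexsmooth}, because $\chi\ge0$ and $\mathrm{v}>0$. This is exactly Proposition \ref{prop:extention-K-energy}. Continuity at the endpoints $t=0,1$ follows from the $d_1$-continuity of the energy terms together with lower semicontinuity and convexity for the entropy; a convex function bounded above by its chord values at $t=0,1$ and lower semicontinuous at those points is continuous there.

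\emph{Step 2: finite-energy endpoints.} Let $\varphi_0,\varphi_1\in\cET(X,\omega)$ with finite $\cM^\chi_{\mathrm{v},\mathrm{w}}$. Otherwise convexity is trivial. By Proposition \ref{prop:extension-mabuchi}, choose $\varphi_0^j,\varphi_1^j\in\cH^T$ converging in $d_1$ with $\cM^\chi_{\mathrm{v},\mathrm{w}}(\varphi_i^j)\to\cM^\chi_{\mathrm{v},\mathrm{w}}(\varphi_i)$. The weak geodesics $\varphi_t^j$ converge in $d_1$ to the finite-energy geodesic $\varphi_t$ for each $t$, by the Darvas/BDL estimate $d_1(\varphi^j_t,\varphi_t)\le\max_i d_1(\varphi_i^j,\varphi_i)$. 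Lower semicontinuity then gives
\[
\cM(\varphi_t)\le\liminf_j\cM(\varphi^j_t)\le\liminf_j\bigl[(1-t)\cM(\varphi_0^j)+t\cM(\varphi_1^j)\bigr]=(1-t)\cM(\varphi_0)+t\cM(\varphi_1).
\]
Applying this on subsegments, which are again geodesics, yields convexity. Continuity on $(0,1)$ is automatic for a finite convex function. At the endpoints, lower semicontinuity combined with the chord bound gives $\lim_{t\to0}\cM(\varphi_t)=\cM(\varphi_0)$.

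\emph{Main obstacle.} The delicate point is Step 1's identification of the decomposed lsc extension with $\cM^0_{\mathrm{v},\mathrm{w}}+\enE_{\mathrm{v}}^{\chi}$ along $C^{1,1}$ geodesics. Here $f$ is only usc, so $\int f\,\MA_{\mathrm{v}}(\varphi_t)$ may be $-\infty$ a priori. I would handle this as follows. Since $\chi\ge0$ and $\beta$ is smooth, $f$ is quasi-psh up to the $\hat f$ part, and a $C^{1,1}$ potential has bounded Monge--Amp\`ere density, so $f\in L^1(\MA_{\mathrm{v}}(\varphi_t))$. If this fails, I would instead approximate $f$ by a decreasing sequence of smooth $f_i\searrow f$, prove convexity for each $\chi_i=\beta+\frac12\ddc f_i+\ddc\hat f$, and pass to the limit using monotone convergence in the entropy term; the bound $\chi_i\ge -C\omega$ is the only place where positivity must be handled carefully.
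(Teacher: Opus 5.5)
Your proposal is correct and follows essentially the same route as the paper. You first get convexity along weak geodesics with smooth endpoints from Proposition \ref{prop:extention-K-energy}, then approximate finite-energy endpoints via Proposition \ref{prop:extension-mabuchi}, use $d_1$-convergence of the approximating geodesics and lower semicontinuity to get the chord inequality, and finally read off continuity from lower semicontinuity plus convexity. Your extra check that $\tfrac12 f+\hat f$ is quasi-psh is a useful addition: it gives $f\in L^1(\MA_{\mathrm{v}}(\varphi_t))$ along $C^{1,1}$ geodesics, so the decomposed extension agrees with $\cM^0_{\mathrm{v},\mathrm{w}}+\enE^{\chi}_{\mathrm{v}}$ there, a point the paper leaves implicit; consequently your fallback approximation by smooth $f_i\searrow f$ is not needed.
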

	\begin{proof}

	By Proposition \ref{prop:extention-K-energy}, $\cM_{\mathrm{v},\mathrm{w}}^{\chi}$ is convex on weak geodesics in $\cH^{1,1}(X,\omega)^T$ with smooth endpoints.

	To establish convexity on general weak geodesics in $\cET(X, \omega)$, let $t_0, t_1 \in [0,1]$ with $t_0 \leq t_1$ and let $(\f_t)_{t \in [t_0,t_1]} \subset \cET(X, \omega)$ be a $d_1$-geodesic. By Proposition \ref{prop:extension-mabuchi}, we can find sequences $\f^k_{t_0}, \f^k_{t_1} \in \cH^T(X, \omega)$ such that $d_1(\f^k_{t_0}, \f_{t_0}) \to 0$, $d_1(\f^k_{t_1}, \f_{t_1}) \to 0$, and
	$$\cM_{\mathrm{v},\mathrm{w}}^{\chi}(\f_{t_0}) = \lim_k \cM_{\mathrm{v},\mathrm{w}}^{\chi}(\f^k_{t_0}), \quad \cM_{\mathrm{v},\mathrm{w}}^{\chi}(\f_{t_1}) = \lim_k \cM_{\mathrm{v},\mathrm{w}}^{\chi}(\f^k_{t_1}).$$

	Let $[t_0,t_1] \ni t \mapsto \f^k_t \in \cH^{1,1}(X,\omega)^T$ be the weak geodesics connecting $\f^k_{t_0}$ and $\f^k_{t_1}$. By \cite[Prop.\ 4.3]{BDL}, we have $d_1(\f^k_t, \f_t) \to 0$ for any $t \in [t_0,t_1]$. 

	For any $t \in [t_0, t_1]$, the lower semicontinuity and convexity properties yield:
	\begin{align*}
	\cM_{\mathrm{v},\mathrm{w}}^{\chi}(\f_t) &\leq \liminf_k \cM_{\mathrm{v},\mathrm{w}}^{\chi}(\f^k_t) \\
	&\leq \liminf_k \left[ \frac{t_1-t}{t_1 - t_0} \cM_{\mathrm{v},\mathrm{w}}^{\chi}(\f^k_{t_0}) + \frac{t-t_0}{t_1 - t_0} \cM_{\mathrm{v},\mathrm{w}}^{\chi}(\f^k_{t_1}) \right]\\
	&= \frac{t_1-t}{t_1 - t_0} \cM_{\mathrm{v},\mathrm{w}}^{\chi}(\f_{t_0}) + \frac{t-t_0}{t_1 - t_0} \cM_{\mathrm{v},\mathrm{w}}^{\chi}(\f_{t_1}).
	\end{align*}
	The first inequality follows from the lower semicontinuity property of the extension. The second inequality uses the convexity of $\cM_{\mathrm{v},\mathrm{w}}^{\chi}$ along the geodesics $\f^k_t \in \cH^{1,1}(X,\omega)^T$. Hence, $t \mapsto \cM_{\mathrm{v},\mathrm{w}}^{\chi}(\f_t)$ is convex on $[t_0, t_1]$.
	The continuity follows from the lower semicontinuity and the convexity of $\cM_{\mathrm{v},\mathrm{w}}^{\chi}(\f_t)$.
	\end{proof}

\section{Openness of coercivity}
\subsection{Convexity of K-energy with mixed cusp and conic singularities}

Consider the case where $\chi$ is the current of integration along a simple normal crossing divisor $D=\sum_{i=1}^k D_i + \sum_{j=k+1}^N (1-\alpha_j)D_j$ where the first $k$ components $D_1,\ldots,D_k$ have coefficient $1$ (corresponding to cusp singularities with cone angle $0$), and the remaining components $D_{k+1},\ldots,D_N$ have coefficients $1-\alpha_j$ with $\alpha_j \in (0,1)$ (corresponding to conic singularities with cone angles $2\pi\alpha_j$). 

Taking the specialization of \eqref{eq: ChiProp} with
\begin{align}
f
&=
\sum_{i=1}^k
\left(
\log|s_{D_i}|_{h_i}^2
+
2\log(\lambda_i-\log|s_{D_i}|_{h_i}^2)
\right)
+
\sum_{j=k+1}^N
(1-\alpha_j)\log|s_{D_j}|_{h_j}^2,\\
\hat{f} &= -\sum_{i=1}^k \log(\lambda_i-\log|s_{D_i}|_{h_i}^2),\\
\chi &= 2\pi[D] = \sum_{i=1}^k 2\pi[D_i] + \sum_{j=k+1}^N 2\pi(1-\alpha_j)[D_j]
\end{align}
where $s_{D_i}$ are defining sections of the irreducible components $D_i$, and the Hermitian metrics $h_i$ are chosen so that $|s_{D_i}|_{h_i}<1$ on $X\setminus D_i$. The constants $\lambda_i$ are chosen sufficiently large so that $\omega-dd^c \sum_{i=1}^k 2\log(\lambda_i-\log|s_{D_i}|_{h_i}^2)>0$ on $X\setminus D_i$ (see Lemma \ref{lem:modelmetric}). This convention is compatible with
\[
2\pi[D_i]=\frac{1}{2}\ddc\log|s_{D_i}|_{h_i}^2+\theta_i.
\]
The term $\beta=\chi - \frac{1}{2}\ddc f-\ddc \hat{f}$ is a smooth $(1,1)$-form by the Lelong-Poincar\'e equation.
Let $\mathrm{v},\mathrm{w}\in C^\infty(\mathfrak{t}^\vee)$ be smooth weights with $\mathrm{v}>0$ on the moment polytope $P_X$.

\begin{remark}
The auxiliary potential $\hat{f}$ is crucial for handling Poincar\'e-type singularities. If one tries to use the framework of \cite{BDL} directly with only
\[
f=\sum_{i=1}^k \log|s_{D_i}|_{h_i}^2 \;+\; \sum_{j=k+1}^N (1-\alpha_j)\log |s_{D_j}|_{h_j}^2,
\]
then $2\pi[D]-\frac{1}{2}\ddc f$ is not a smooth form, so \cite{BDL} does not apply. Alternatively, one could consider $\cM_{\mathrm{v},\mathrm{w}}^{2\pi[D]-\ddc \hat{f}}$, which does fit into the framework of \cite{BDL}, but the resulting Euler--Lagrange equation no longer takes the desired form
\[
(\mathrm{w} - S_{\mathrm{v}}^{2\pi[D]}(\omega_\varphi))\,\MA_{\mathrm{v}}(\varphi)=0.
\]
\end{remark}

\begin{cor}[Weighted K-energy convexity for mixed cusp and conic singularities]\label{prop:extention-K-energy-divisor}
Let $(X,\omega)$ be a compact connected K\"ahler manifold and let $T\subset\Aut_{red}(X,[D])$ be a compact torus. For smooth weights $\mathrm{v},\mathrm{w}\in C^\infty(\mathfrak{t}^\vee)$ with $\mathrm{v}>0$ on the moment polytope $P_X$, the weighted twisted Mabuchi functional
\begin{equation}
\cM_{\mathrm{v},\mathrm{w}}^{2\pi[D]}(\varphi) = \ent_{\mathrm{v}}(\varphi) + \enR_{\mathrm{v}}^{2\pi[D]}(\varphi) + \enE_{\mathrm{v}\,\mathrm{w}}(\varphi)
\end{equation}
admits a unique greatest lower semicontinuous extension $\cM_{\mathrm{v},\mathrm{w}}^{2\pi[D]}: \cET(X, \omega) \to \R\cup \{+\infty\}$, that is moreover convex and continuous along weak geodesics.
\end{cor}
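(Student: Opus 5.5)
The corollary follows from Theorem \ref{thm:main} once we check that $\chi=2\pi[D]$, with the $f$, $\hat f$, $\beta$ fixed above, satisfies the hypotheses of \eqref{eq: ChiProp}. Concretely, we must show:
\begin{itemize}
\item[(a)] $\chi$ is a positive $T$-invariant current with $T\subset\Aut_{red}(X,\chi)$;
\item[(b)] $\beta=\chi-\frac12\ddc f-\ddc\hat f$ is smooth;
\item[(c)] $f$ is upper semicontinuous with $e^{-f}\in L^1(X,\omega^n)$;
\item[(d)] $\hat f\in\cET(X,\omega)$.
\end{itemize}
Given these, Theorem \ref{thm:main} produces the unique greatest lower semicontinuous extension, convex and continuous along weak geodesics. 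By Proposition \ref{prop:extension-mabuchi} this extension is given by the decomposed formula \eqref{eq:Mabuchi-finite-energy-decomposed}.

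For (a), positivity is clear, since $\chi$ is a nonnegative combination of integration currents. For $T$-invariance, note that $T\subset\Aut_{red}(X,[D])$ preserves each $D_j$, at least after passing to the connected component, since a torus cannot permute components. We then average the Hermitian metrics $h_j$ over $T$, so that $|s_{D_j}|_{h_j}$ is $T$-invariant; this makes $f$ and $\hat f$ $T$-invariant as well.

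For (b), apply Lelong--Poincar\'e, $\frac12\ddc\log|s_{D_j}|^2_{h_j}=2\pi[D_j]-\theta_j$ with $\theta_j$ the smooth curvature form, under the paper's normalization. The conic terms contribute exactly $(1-\alpha_j)(2\pi[D_j]-\theta_j)$. For the cusp terms, the $\log\log$ pieces cancel: $\frac12\ddc\bigl(2\log(\lambda_i-\log|s_{D_i}|^2)\bigr)+\ddc\hat f$ contains $\ddc\log(\lambda_i-\log|s_{D_i}|^2)-\ddc\log(\lambda_i-\log|s_{D_i}|^2)=0$. Hence $\beta=\sum_j c_j\theta_j$ is smooth. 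This is the reason for the specific choice of $\hat f$.

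For (c), $f$ is a finite sum of functions that are smooth off $D$ and tend to $-\infty$ along $D$, so it is upper semicontinuous (the $\log\log$ term is dominated by $\log|s|^2$). Further,
\[
e^{-f}=\prod_{i\le k}|s_{D_i}|^{-2}\bigl(\lambda_i-\log|s_{D_i}|^2\bigr)^{-2}\prod_{j>k}|s_{D_j}|^{-2(1-\alpha_j)}.
\]
In local snc coordinates $z_1,\dots,z_N$ this is a product of one-variable factors. The cusp factor is integrable because $\int_{|z|<1/2}\frac{dA}{|z|^2\log^2|z|}<\infty$, and the conic factor because $1-\alpha_j<1$. Fubini then gives $e^{-f}\in L^1$.

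For (d), which is the main obstacle, the issue is the sign: $\hat f=-\sum\log(\lambda_i-\log|s_{D_i}|^2)$ tends to $-\infty$ only like $-\log\log$. By Lemma \ref{lem:modelmetric}, with $\lambda_i$ large, $\omega+\ddc\hat f\ge \omega-\ddc\sum 2\log(\dots)\cdot\frac12$ is positive off $D$, up to rescaling the constant. So $\hat f$ is $\omega$-psh on $X\setminus D$, bounded above, and extends across $D$ as an $\omega$-psh function. To get finite energy, truncate: set $\hat f_M=\max(\hat f,-M)$, which are bounded $T$-invariant $\omega$-psh functions decreasing to $\hat f$. By the standard criterion, $\hat f\in\mathcal E^1$ iff $\int|\hat f|\,(\omega+\ddc \hat f_M)^n$ stays bounded, or equivalently, since Monge--Amp\`ere does not charge $D$, iff $\int_{X\setminus D}|\hat f|(\omega+\ddc\hat f)^n<\infty$. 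Now $\omega+\ddc\hat f$ is dominated by a Poincar\'e-type metric, whose volume form is locally $\prod|z_i|^{-2}\log^{-2}|z_i|$ times smooth. Since $|\hat f|\sim\sum\log\log(1/|z_i|)$, the integral converges, because $\int\frac{\log\log(1/r)}{r\log^2 r}\,dr<\infty$. This is the computation also underlying the appendix's claim that Poincar\'e-type potentials lie in $\cET$; if preferred, we cite it there instead.
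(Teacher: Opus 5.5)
Your proposal is correct and is essentially the paper's proof: the paper just checks $e^{-f}\in L^1(X,\omega^n)$ (citing Lemma~\ref{lem:poincare-finite-volume}) and $\hat f\in\cET(X,\omega)$ (citing Lemma~\ref{lem:poincare-E1}) and then applies Theorem~\ref{thm:main}, while your items (a)--(c), including the Fubini treatment of the conic factors, make explicit what the paper leaves to the setup. One caution if you keep your direct argument for (d) rather than citing Lemma~\ref{lem:poincare-E1}: reducing to $\int_{X\setminus D}|\hat f|\,(\omega+\ddc\hat f)^n<\infty$ also requires full mass, $\int_{X\setminus D}(\omega+\ddc\hat f)^n=\int_X\omega^n$; this does not follow from the Monge--Amp\`ere measure not charging $D$ (it fails for potentials with positive Lelong number along $D$, e.g.\ a small multiple of $\log|s_D|_h^2$), but it does hold here by a Stokes argument near $D$ whose boundary terms on $\{|s_{D_i}|_{h_i}=\varepsilon\}$ are $O(1/|\log\varepsilon|)$.
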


\begin{proof}
	By Lemma \ref{lem:poincare-finite-volume}, we have $ e^{-f}\in L^1(X,\omega)$, and by Lemma \ref{lem:poincare-E1}, $ \hat f \in \cET(X, \omega)$. The result follows directly from our general framework in Theorem \ref{thm:main}.
\end{proof}
\subsection{Coercivity}

In this section, we establish the openness of the coercivity condition for the (relative) weighted twisted Mabuchi energy with respect to the cone angle parameters. Our main result, Theorem \ref{thm:coeropen0}, demonstrates that the coercivity of the relative functional $\cM^\mathrm{rel}_{\mathrm{v},\mathrm{w},2\pi [D]}$ is stable under small perturbations of the angle vector $\boldsymbol{\alpha} = (\alpha_1, \dots, \alpha_N)$ associated with the simple normal crossing divisor $D = \sum (1-\alpha_j)D_j$. We prove that if coercivity holds for a given configuration of cone angles, it persists for all parameters in a neighborhood. The proof relies on comparing weighted entropies via Lemma~\ref{lemma:Ent-compare} and applying the finite-energy estimates \eqref{equ:wenlip}, \eqref{equ:twenhold}, and \eqref{equ:energy-ddc-finite-continuity}. We first establish the necessary preliminary results regarding the relative entropy in the variational framework of \cite{BBEGZ}.

Let $X$ be an arbitrary compact (Hausdorff) topological space, fixed with a positive Radon measure $\nu$. Recall that the \emph{relative entropy} of any positive Radon measure $\mu$ on $X$ with respect to $\nu$ is defined as
$$
\Ent(\mu|\nu)=\int_X f\log f\,d\nu\in\R\cup\{+\infty\}
$$
if $d\mu=f d\nu$ with $f\in L^1(\nu)$, and $\Ent(\mu|\nu)=+\infty$ otherwise. A crucial feature of the relative entropy is its characterization as a Legendre transform. As established in \cite[Lemma 2.11]{BBEGZ}, we have:
\begin{equation}\label{equ:entleg}
\Ent(\mu|\nu)=\sup_{g\in C^0(X)}\left\{\int_X g\,d\mu - \mu(X)\log\int_X e^g d\nu\right\} + \mu(X)\log\mu(X).
\end{equation}
This formula implies that the functional
$$
\Ent(\cdot|\nu)\colon\cM\to\R\cup\{+\infty\}
$$
is convex and lower semicontinuous (lsc) on the space $\cM$ of positive Radon measures equipped with the weak topology. Furthermore, it satisfies the bound:
\begin{equation}\label{equ:entbd}
\Ent(\mu|\nu)\ge\mu(X)\log\frac{\mu(X)}{\nu(X)}.
\end{equation}
We note that the variational formula \eqref{equ:entleg} remains valid when the test function $g$ is merely upper semicontinuous (usc) and bounded above (or lsc and bounded below), via monotone approximation by continuous functions (see \cite[Lemma 2.11]{BBEGZ}).

\begin{lemma}[Entropy Stability]\label{lemma:Ent-compare}
Let $X$ be a compact topological space equipped with a probability measure $\nu$. Let $F: X \to \mathbb{R} \cup \{+\infty\}$ be a lower or upper semicontinuous function bounded from below. Define the positive Radon measure $\hat{\nu} = e^{-F} \nu$.

Suppose there exist constants $p > 1$ such that $\int_X e^{-pF} d\nu$ is finite,
then for any probability measure $\mu$ on $X$,
\[
\Ent(\mu|\nu) \le \left(\frac{p}{p-1}\right) \Ent(\mu|\hat{\nu}) + \frac{\eta}{p-1}.
\]
where $\eta = \log \int_X e^{-pF} d\nu$.
\end{lemma}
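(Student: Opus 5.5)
The plan is to compare the two entropies through the Radon--Nikodym relation $\hat\nu=e^{-F}\nu$, and then to control the resulting cross term $-\int_X F\,d\mu$ with the Legendre-transform characterization \eqref{equ:entleg} of $\Ent(\cdot|\hat\nu)$.

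\textbf{Reduction.} If $\Ent(\mu|\hat\nu)=+\infty$ there is nothing to prove, so I assume it is finite. Then $\mu=h\hat\nu$ with $h\log h\in L^1(\hat\nu)$. Hence $\mu=he^{-F}\nu$ is absolutely continuous with respect to $\nu$, with density $he^{-F}$; on the set $\{F=+\infty\}$ both $\hat\nu$ and $\mu$ vanish. Formally,
\[
\Ent(\mu|\nu)=\int_X he^{-F}\log\bigl(he^{-F}\bigr)\,d\nu=\Ent(\mu|\hat\nu)-\int_X F\,d\mu .
\]
To make this rigorous I first check integrability. Since $F\ge -C$, the function $F$ is bounded below, so $\int_X F\,d\mu\in(-C,+\infty]$ is well defined and $-\int_X F\,d\mu\le C$. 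The splitting $\log(he^{-F})=\log h-F$ is then legitimate: the positive part of $-F$ is bounded, and $\mu$ is a probability measure. So the identity holds in $\R\cup\{-\infty\}$ for the right-hand side; the next step shows it is in fact finite.

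\textbf{Key estimate.} Apply the variational formula \eqref{equ:entleg} for $\Ent(\mu|\hat\nu)$ with $\mu(X)=1$ and test function $g=-(p-1)F$. This $g$ is semicontinuous and bounded above because $F$ is bounded below, so the remark after \eqref{equ:entleg} allows it as a test function. This gives
\[
-(p-1)\int_X F\,d\mu\le \Ent(\mu|\hat\nu)+\log\int_X e^{-(p-1)F}\,d\hat\nu
=\Ent(\mu|\hat\nu)+\log\int_X e^{-pF}\,d\nu=\Ent(\mu|\hat\nu)+\eta .
\]
Dividing by $p-1>0$ and inserting the result into the identity above yields
\[
\Ent(\mu|\nu)\le\Bigl(1+\tfrac{1}{p-1}\Bigr)\Ent(\mu|\hat\nu)+\tfrac{\eta}{p-1},
\]
and $1+\frac{1}{p-1}=\frac{p}{p-1}$, which is the claim.

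\textbf{Main obstacle.} The only delicate point is justifying \eqref{equ:entleg} when $\hat\nu$ is not a probability measure and $g$ is merely semicontinuous and possibly $-\infty$ where $F=+\infty$. I would first run the argument with the truncations $g_k=-(p-1)\min(F,k)$, which are bounded semicontinuous functions. Then I let $k\to\infty$, using monotone convergence both in $\int_X g_k\,d\mu$ and in $\int_X e^{g_k}\,d\hat\nu$.
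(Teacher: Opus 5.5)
Your proof is correct and is essentially the paper's: both rest on the identity $\Ent(\mu|\nu)=\Ent(\mu|\hat\nu)-\int_X F\,d\mu$ and use the variational characterization \eqref{equ:entleg} to bound $-\int_X F\,d\mu$. The only difference is that you apply \eqref{equ:entleg} to $\hat\nu$ with $g=-(p-1)F$ rather than to $\nu$ with $g=-pF$; the two resulting inequalities are equivalent via that identity, but your version avoids the paper's step of absorbing $\frac{1}{p}\Ent(\mu|\nu)$ into the left-hand side, which tacitly requires knowing in advance that $\Ent(\mu|\nu)<\infty$.
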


\begin{proof}
The density of $\mu$ relative to $\hat{\nu}$ is given by $\frac{d\mu}{d\hat{\nu}} = \frac{d\mu}{d\nu} e^F$. Expanding the relative entropy yields the identity:
\begin{equation}\label{eq:bridge}
\Ent(\mu|\hat{\nu}) = \int_X \log\left(\frac{d\mu}{d\nu} e^F\right) d\mu = \Ent(\mu|\nu) + \int_X F \, d\mu.
\end{equation}
To bound the term $-\int_X F \, d\mu$, we utilize the Legendre transform characterization \eqref{equ:entleg}. If $F$ is lower semicontinuous, then the function $g = -pF$ is upper semicontinuous. We have:
\[
\int_X g \, d\mu \le \log \int_X e^g d\nu + \Ent(\mu|\nu).
\]
Substituting $g = -pF$, we obtain:
\[
-p \int_X F \, d\mu \le \log \int_X e^{-pF} d\nu + \Ent(\mu|\nu) \le \log C + \Ent(\mu|\nu).
\]
Dividing by $p$ gives:
\[
-\int_X F \, d\mu \le \frac{1}{p} \Ent(\mu|\nu) + \frac{1}{p} \log C.
\]
Substituting this bound into the rearranged identity $\Ent(\mu|\nu) = \Ent(\mu|\hat{\nu}) - \int_X F d\mu$, we get:
\[
\Ent(\mu|\nu) \le \Ent(\mu|\hat{\nu}) + \frac{1}{p} \Ent(\mu|\nu) + \frac{1}{p} \log C.
\]
Collecting the $\Ent(\mu|\nu)$ terms on the left side:
\[
\left(1 - \frac{1}{p}\right) \Ent(\mu|\nu) \le \Ent(\mu|\hat{\nu}) + \frac{1}{p} \log C.
\]
Multiplying by $\frac{p}{p-1}$ yields the result.
\end{proof}

Recall the notation $\ent_{\mathrm{v}}^{f}(\varphi) := \frac{1}{2}\Ent(\MA_{\mathrm{v}}(\varphi) \mid e^{-f} \omega^n)$ for the weighted entropy with respect to the measure $e^{-f}\omega^n$. Following the notation of Subsection~\ref{subsec:futaki}, we abbreviate
$\ell^{\mathrm{ext}}:=\ell^{\mathrm{ext}}_{\omega,\mathrm{v},\mathrm{w},\chi}$ and $\hat{\ell}^{\mathrm{ext}}:=\ell^{\mathrm{ext}}_{\omega,\mathrm{v},\mathrm{w},\hat{\chi}}$.

\begin{lemma}[Affineness of the extremal function in the twist]\label{lem:ellext-affine}
Fix $\omega$ and smooth weights $\mathrm{v},\mathrm{w}\in C^\infty(\ft^\vee)$ with $\mathrm{v},\mathrm{w}>0$ on $P$.
Let $\chi$ and $\hat{\chi}$ be $T$-invariant $(1,1)$-currents of order $0$.
Set $\cA:=\ft\oplus\R$ (affine functions on $\ft^\vee$), and define
\[
I(\ell'):=\langle\,\cdot\,,\ell'\rangle,\qquad 
L_\chi(\ell):=\int_X \ell(m_\omega)\,S_{\mathrm{v}}^\chi(\omega)\,\MA_{\mathrm{v}}(0),\qquad \ell,\ell'\in\cA.
\]
Then:
$\ell^{\mathrm{ext}}_{\omega,\mathrm{v},\mathrm{w},\chi}=I^{-1}(L_\chi)$, and for any $\hat{\chi}$
\begin{equation}\label{eq:ellext-affine}
\ell^{\mathrm{ext}}_{\hat{\chi}}-\ell^{\mathrm{ext}}_{\chi}
=I^{-1}(L_{\hat{\chi}}-L_{\chi})
=-\,I^{-1}\!\left(\ell\mapsto \int_X \ell(m_\omega)\,\MA_{\mathrm{v}}^{\hat{\chi}-\chi}(0)\right).
\end{equation}
In particular, $\ell^{\mathrm{ext}}_{\omega,\mathrm{v},\mathrm{w},\chi}$ depends affinely on $\chi$.

\end{lemma}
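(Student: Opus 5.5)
The lemma is linear algebra on the finite-dimensional space $\cA=\ft\oplus\R$. The plan is to read the defining identity \eqref{equ:ellext-twisted} as an equation between linear functionals on $\cA$, and then check that $\chi\mapsto L_\chi$ is affine with the stated linear part.

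\textbf{Step 1: $I$ is invertible.} Since $\mathrm{w}>0$ on $P$ and $\mathrm{v}>0$, the pairing \eqref{equ:Futaki-pairing} is positive definite on $\cA$. Indeed, if $\langle\ell,\ell\rangle_{\mathrm{w}}=0$, then $\ell(m_\omega)=0$ on $\operatorname{supp}\MA_{\mathrm{v}}(0)=X$. Because $m_\omega(X)=P$ has nonempty interior in $\ft^\vee$ (the torus acts effectively), this forces $\ell=0$. Hence $I:\cA\to\cA^\vee$, $\ell'\mapsto\langle\cdot,\ell'\rangle_{\mathrm{w}}$, is a linear isomorphism. Now \eqref{equ:ellext-twisted} says exactly that $I(\ell^{\mathrm{ext}})=L_\chi$, so $\ell^{\mathrm{ext}}_{\omega,\mathrm{v},\mathrm{w},\chi}=I^{-1}(L_\chi)$. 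This requires $L_\chi\in\cA^\vee$ to be well defined for a current of order $0$. That holds because $\ell(m_\omega)$, $\mathrm{v}$ and its derivatives are smooth, so every term in $\MA_{\mathrm{v}}^{\chi}(0)$, namely $\mathrm{v}(m_\omega)\,n\,\chi\wedge\omega^{n-1}$ and $\langle\mathrm{v}'(m_\omega),m_\chi\rangle\omega^n$, pairs with smooth functions. The moment map $m_\chi$ is a distribution of order $\le 1$; its pairing against the smooth function $\ell(m_\omega)\langle\mathrm{v}'(m_\omega),\cdot\rangle$ is still fine.

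\textbf{Step 2: affine dependence of $L_\chi$.} By \eqref{equ:Svchi-measure} at $\varphi=0$,
\[
S^\chi_{\mathrm{v}}(\omega)\MA_{\mathrm{v}}(0)=\MA_{\mathrm{v}}^{\Ric_{\mathrm{v}}(\omega)-\chi}(0).
\]
The operator $\theta\mapsto\MA^\theta_{\mathrm{v}}(0)$ in \eqref{equ:twMA} is linear in the pair $(\theta,m_\theta)$. Taking $m_{\hat\chi}-m_\chi$ as the moment map of $\hat\chi-\chi$ (this choice is consistent with the normalizations used to define $S^\chi_{\mathrm{v}}$ and $S^{\hat\chi}_{\mathrm{v}}$), we get
\[
L_{\hat\chi}(\ell)-L_\chi(\ell)=-\int_X\ell(m_\omega)\,\MA_{\mathrm{v}}^{\hat\chi-\chi}(0).
\]
Applying the linear map $I^{-1}$ gives \eqref{eq:ellext-affine}.

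\textbf{Step 3: conclusion.} For $\chi_s=(1-s)\chi+s\hat\chi$, linearity of $\chi\mapsto\MA^\chi_{\mathrm{v}}(0)$ makes $L_{\chi_s}$ affine in $s$, and therefore so is $\ell^{\mathrm{ext}}_{\chi_s}$.

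The only delicate point is Step 2, specifically the moment-map normalization: $m_\chi$ is defined only up to an additive constant, and that constant enters $\MA^\chi_{\mathrm{v}}$ through $\langle\mathrm{v}'(m_\omega),m_\chi\rangle$. I would fix the convention that the moment maps depend linearly on the current. For the divisor twists used later this holds with canonical choices: $m_{\ddc u}=\dc u(\xi)$ from \eqref{equ:mddc}, and for $[D_j]$ one takes the $T$-invariant normalization induced by the chosen hermitian metrics. With this convention the computation is a direct linearity check.
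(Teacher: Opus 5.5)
Your argument is correct and is the same as the paper's, which simply subtracts the defining identity \eqref{equ:ellext-twisted} for $\chi$ and $\hat\chi$ and inverts the positive definite pairing \eqref{equ:Futaki-pairing}. Your additional points fill in details the paper leaves implicit. These are the nondegeneracy of $I$ (via the nonempty interior of $P$ for an effective torus action) and the requirement $m_{\hat\chi-\chi}=m_{\hat\chi}-m_\chi$, needed so that $\chi\mapsto\MA_{\mathrm{v}}^{\chi}(0)$ is genuinely linear.
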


\begin{proof}
The statement follows by subtracting the characterization \eqref{equ:ellext-twisted} for the two twists and using the pairing \eqref{equ:Futaki-pairing}.
\end{proof}

\begin{prop}[Openness of coercivity under perturbations]\label{prop:coercivity-weighted-twisted}
Fix smooth weights $\mathrm{v},\mathrm{w}\in C^\infty(\ft^\vee)$ with $\mathrm{v},\mathrm{w}>0$ on the (normalized) moment polytope $P$.
Let $\chi$ be a positive $(1,1)$-current
\[
\chi=\beta+\tfrac12\ddc f,
\]
where $\beta$ is smooth. Fix a smooth real $(1,1)$-form $\mathcal{B}$ and a quasi-psh function $F$, set
\[
\chi_t:=\beta_t+\tfrac12\ddc f_t,\qquad 
\beta_t:=\beta+t\mathcal{B},\qquad f_t:=f+tF,
\]
 and choose $t_0 >0$ sufficiently small such that $\chi_t$ is positive for all $t\in[0,t_0]$.
Assume:
\begin{enumerate}
\item[(i)] There exists $p>1$ such that $e^{-F}\in L^p(X,e^{-f}\omega^n)$.
\item[(ii)] The weighted twisted Mabuchi functional
\[
\cM^\mathrm{rel}_{\mathrm{v},\mathrm{w},\chi}(\varphi)
=\ent_{\mathrm{v}}^{f}(\varphi)+\enR_{\mathrm{v}}^{\beta}(\varphi)+\enE_{\mathrm{v}\,\mathrm{w}\,\ell_\chi^{\mathrm{ext}}}(\varphi)
\]
is coercive relative to $T^\mathbb{C}$, i.e.\ there exist $\delta,A>0$ such that
\[
\cM^\mathrm{rel}_{\mathrm{v},\mathrm{w},\chi}(\varphi)\ge \delta\, d_{1,T^\mathbb{C}}(\varphi,0)-A,
\]
for all $\varphi\in\cET_0(X,\omega):=\{\f \in \cET | \enE(\f)=0\}$.
\end{enumerate}
Then there exists a constant $C>0$, depending only on the background data
$(\omega,\mathrm{v},\mathrm{w},\chi, \mathcal{B})$, such that for every $t\in(0,t_0]$, and every
$\varphi\in\cET_0(X,\omega)$,
\[
\cM^\mathrm{rel}_{\mathrm{v},\mathrm{w},\chi_t}(\varphi)\ge \delta_t\, d_{1,T^\mathbb{C}}(\varphi,0)-A_t,
\]
where
\[
\eta:=\int_X e^{-pF}\,e^{-f}\,\omega^n,\qquad 
\delta_t:=\delta\Bigl(1-\frac{t}{p}\Bigr)-Ct,\qquad 
A_t:=A+\frac{t}{p-t}\log\eta.
\]
In particular, as \(t\to 0\) we have \(\delta_t\to \delta\) and $A_t\to A$, and the functional \(\cM^\mathrm{rel}_{\mathrm{v},\mathrm{w},\chi_t}\) remains coercive relative to \(T^\mathbb{C}\).

\end{prop}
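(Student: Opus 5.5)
The plan is to decompose $\cM^\mathrm{rel}_{\mathrm{v},\mathrm{w},\chi_t}$ into the three pieces entropy, Ricci energy and weighted energy, and to compare each with its counterpart at $t=0$. The structural fact used throughout is that $\cM^\mathrm{rel}$ is $T^\C$-invariant (the Futaki character vanishes for the modified weight), so it suffices to estimate on a minimizer $\sigma\cdot\varphi$ of $d_1(\sigma\cdot\varphi,0)$. Such a minimizer exists by Remark \ref{rem:relative_dist_attained}, and we renormalize it so that $\enE=0$. Thus we may assume $d_{1,T^\C}(\varphi,0)=d_1(\varphi,0)$ whenever linear error terms in $d_1(\varphi,0)$ have to be absorbed.

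\emph{Entropy.} Write
\[
\ent^{f_t}_{\mathrm{v}}(\varphi)=\tfrac12\Ent\bigl(\MA_{\mathrm{v}}(\varphi)\mid e^{-tF}e^{-f}\om^n\bigr).
\]
We apply Lemma \ref{lemma:Ent-compare} with reference measure $\nu=e^{-f}\om^n$ (normalized), with $tF$ in place of $F$, and with exponent $p/t>1$; this is where we need $t<p$ and hypothesis (i). Since $\int e^{-(p/t)tF}d\nu=\eta$, the lemma gives
\[
\ent_{\mathrm{v}}^{f}(\varphi)\le \frac{p}{p-t}\,\ent_{\mathrm{v}}^{f_t}(\varphi)+\frac{t}{2(p-t)}\log\eta,
\]
after accounting for the mass normalizations $V_{\mathrm{v}}$ and $\nu(X)$, which only add constants. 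Rearranging yields
\[
\ent^{f_t}_{\mathrm{v}}\ge(1-t/p)\,\ent^{f}_{\mathrm{v}}-\tfrac{t}{p}\cdot\tfrac{1}{2}\log\eta.
\]
The boundedness-below hypothesis of the lemma should be handled by exploiting quasi-psh upper semicontinuity: $-F$ is bounded below, and one can use the variant of \eqref{equ:entleg} valid for usc $g=-pF$ bounded above. If $F$ is not bounded below, I would truncate $F$ by $\max(F,-k)$ and pass to the limit by monotone convergence.

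\emph{Remaining terms.} Write $\cM^\mathrm{rel}_{\chi_t}=(1-t/p)\cM^\mathrm{rel}_\chi+\text{(error)}$. The error consists of
\[
\tfrac tp\bigl(\enR^\beta_{\mathrm{v}}+\enE_{\mathrm{v}\mathrm{w}\ell^{\mathrm{ext}}_\chi}\bigr)+\bigl(\enR^{\beta_t}_{\mathrm{v}}-\enR^{\beta}_{\mathrm{v}}\bigr)+\bigl(\enE_{\mathrm{v}\mathrm{w}\ell^{\mathrm{ext}}_{\chi_t}}-\enE_{\mathrm{v}\mathrm{w}\ell^{\mathrm{ext}}_{\chi}}\bigr).
\]
The first group is bounded below by $-Ct\,d_1(\varphi,0)-Ct$ using \eqref{equ:wenlip} and \eqref{equ:twenhold}. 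For \eqref{equ:twenhold}, the Hölder exponent form still gives a bound linear in $d_1(\varphi,0)$ when comparing with $\psi=0$. The second group equals $t\,\enE^{\mathcal B}_{\mathrm{v}}(\varphi)$, up to a constant normalization, and is again controlled by \eqref{equ:twenhold}. For the third group we use Lemma \ref{lem:ellext-affine}: $\ell^{\mathrm{ext}}_{\chi_t}-\ell^{\mathrm{ext}}_\chi$ is $t$ times a fixed affine function determined by $\mathcal B+\frac12\ddc F$. Because $\ddc F$ pairs with $\ell(m_\om)\MA_{\mathrm{v}}(0)$ through integration by parts against smooth data, this affine function is finite. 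Hence the third group is $t\,\enE_{\mathrm{v}\mathrm{w}\ell'}$ with $\ell'$ fixed, and \eqref{equ:wenlip} bounds it by $Ct\,d_1(\varphi,0)$ since $\enE_{\mathrm{v}\mathrm{w}\ell'}(0)=0$.

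\emph{Assembling.} Combining the entropy inequality and the error bounds with the coercivity hypothesis (ii) applied to $\varphi$ gives
\[
\cM^\mathrm{rel}_{\chi_t}(\varphi)\ge(1-t/p)\bigl(\delta d_1(\varphi,0)-A\bigr)-Ct\,d_1(\varphi,0)-A'_t,
\]
which produces $\delta_t=\delta(1-t/p)-Ct$. The $T^\C$-invariance of both relative functionals then converts $d_1$ back into $d_{1,T^\C}$.

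\emph{Main obstacle.} The hardest step is justifying the reduction to the distance-minimizing representative. One must ensure that the renormalized minimizer still lies in $\cET_0$ and that $\cM^\mathrm{rel}_{\chi_t}$ is genuinely $T^\C$-invariant on finite-energy potentials for each $t$ with the $t$-dependent extremal function; I would invoke the vanishing of the modified Futaki character. A secondary issue is matching the constants exactly to the stated $A_t$, which I expect to involve only bookkeeping of the mass factors.
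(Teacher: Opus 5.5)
Your proposal is correct and follows the paper's proof essentially step for step. Both arguments apply Lemma \ref{lemma:Ent-compare} to the pair $(e^{-f}\omega^n, e^{-tF}e^{-f}\omega^n)$ with exponent $p/t$, rewrite $\cM^\mathrm{rel}_{\mathrm{v},\mathrm{w},\chi_t}$ as $(1-\frac tp)\cM^\mathrm{rel}_{\mathrm{v},\mathrm{w},\chi}$ plus energy errors of size $Ct\,d_1(\varphi,0)$ (using \eqref{equ:wenlip}, \eqref{equ:twenhold} and Lemma \ref{lem:ellext-affine}), and pass to the $d_1$-minimizer in the $T^\mathbb{C}$-orbit via $T^\mathbb{C}$-invariance. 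Your extra bookkeeping (the factor $\frac12$ in $\ent_{\mathrm{v}}$, the mass normalizations, and $F$ not being bounded below) is sound and only affects the exact form of $A_t$, which the paper records loosely; the one slip is that for quasi-psh $F$ the test function $g=-pF$ is lower semicontinuous and bounded below, not usc and bounded above, though \eqref{equ:entleg} still applies by increasing continuous approximation.
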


\begin{proof}
Throughout the proof, $C$ denotes a positive constant that may change from line to line.
For each $t$, the functional $\cM^\mathrm{rel}_{\mathrm{v},\mathrm{w},\chi_t}$ is $T^\mathbb{C}$-invariant.

Set $\nu:=e^{-f}\omega^n$ and $\nu_t:=e^{-f_t}\omega^n$. We have
\[
\nu_t=e^{-tF}\nu.
\]
Assumption (i) says that $e^{-pF}\in L^1(X,\nu)$.
For $t\in(0,t_0]$, we have $\frac{p}{t}>1$ and $\frac{p}{t}\to+\infty$ as $t\to0^+$. Also, $e^{-(\frac{p}{t})tF}=e^{-pF}$ is $\nu$-integrable, so Lemma~\ref{lemma:Ent-compare}
applied to the pair $(\nu,\nu_t)$ with exponent $\frac{p}{t}$ yields
\[
\ent_{\mathrm{v}}^{f}(\varphi)\le \frac{p}{p-t}\,\ent_{\mathrm{v}}^{f_t}(\varphi)+\frac{t}{p-t}\log \eta,
\]
hence, after rearranging,
\[
\ent_{\mathrm{v}}^{f_t}(\varphi)\ge \Bigl(1-\frac{t}{p}\Bigr)\ent_{\mathrm{v}}^{f}(\varphi)-\frac{t}{p-t}\log \eta
=\Bigl(1-\frac{t}{p}\Bigr)\ent_{\mathrm{v}}^{f}(\varphi)-\frac{t}{p-t}\log \eta.
\]

We now expand $\cM^\mathrm{rel}_{\mathrm{v},\mathrm{w},\chi_t}(\varphi)$ and insert the previous inequality:
\begin{align*}
\cM^\mathrm{rel}_{\mathrm{v},\mathrm{w},\chi_t}(\varphi)
&=\ent_{\mathrm{v}}^{f_t}(\varphi)+\enR_{\mathrm{v}}^{\beta_t}(\varphi)+\enE_{\mathrm{v}\,\mathrm{w}\,\ell_{\chi_t}^{\mathrm{ext}}}(\varphi)\\
&\ge \Bigl(1-\frac{t}{p}\Bigr)\ent_{\mathrm{v}}^{f}(\varphi)+\enR_{\mathrm{v}}^{\beta}(\varphi)+\enE_{\mathrm{v}\,\mathrm{w}\,\ell_{\chi}^{\mathrm{ext}}}(\varphi)
+\enE_{\mathrm{v}}^{t\mathcal{B}}(\varphi)
+\enE_{\mathrm{v}\,\mathrm{w}\,(\ell_{\chi_t}^{\mathrm{ext}}-\ell_\chi^{\mathrm{ext}})}(\varphi)
-\frac{t}{p-t}\log \eta\\
&=\Bigl(1-\frac{t}{p}\Bigr)\cM^\mathrm{rel}_{\mathrm{v},\mathrm{w},\chi}(\varphi)
+\frac{t}{p}\Bigl(\enR_{\mathrm{v}}^{\beta}(\varphi)+\enE_{\mathrm{v}\,\mathrm{w}\,\ell_\chi^{\mathrm{ext}}}(\varphi)\Bigr)
+\enE_{\mathrm{v}}^{t\mathcal{B}}(\varphi)
+\enE_{\mathrm{v}\,\mathrm{w}\,(\ell_{\chi_t}^{\mathrm{ext}}-\ell_\chi^{\mathrm{ext}})}(\varphi)
-\frac{t}{p-t}\log \eta.
\end{align*}

By the finite-energy estimates \eqref{equ:wenlip}, \eqref{equ:twenhold}, and \eqref{equ:energy-ddc-finite-continuity}, we have
\[
\Bigl|\enR_{\mathrm{v}}^{\beta}(\varphi)+\enE_{\mathrm{v}\,\mathrm{w}\,\ell_\chi^{\mathrm{ext}}}(\varphi)\Bigr|\le C\,d_1(\varphi,0),
\qquad
\bigl|\enE_{\mathrm{v}}^{t\mathcal{B}}(\varphi)\bigr|\le C\,\|t\mathcal{B}\|_\omega\, d_1(\varphi,0),
\]
and, using linearity in the weight,
\[
\bigl|\enE_{\mathrm{v}\,\mathrm{w}\,(\ell_{\chi_t}^{\mathrm{ext}}-\ell_\chi^{\mathrm{ext}})}(\varphi)\bigr|
\le C\,\|\mathrm{w}\|_{C^0(P)}\,\|\ell_{\chi_t}^{\mathrm{ext}}-\ell_{\chi}^{\mathrm{ext}}\|_{C^0(P)}\,d_1(\varphi,0).
\]
We get $\|t\mathcal{B}\|_\omega\le Ct$.
Moreover, by Lemma~\ref{lem:ellext-affine},
\[
\|\ell_{\chi_t}^{\mathrm{ext}}-\ell_{\chi}^{\mathrm{ext}}\|_{C^0(P)}\le Ct.
\]
Therefore the error terms satisfy
\[
\frac{t}{p}\Bigl|\enR_{\mathrm{v}}^{\beta}(\varphi)+\enE_{\mathrm{v}\,\mathrm{w}\,\ell_\chi^{\mathrm{ext}}}(\varphi)\Bigr|
+\bigl|\enE_{\mathrm{v}}^{t\mathcal{B}}(\varphi)\bigr|
+\bigl|\enE_{\mathrm{v}\,\mathrm{w}\,(\ell_{\chi_t}^{\mathrm{ext}}-\ell_\chi^{\mathrm{ext}})}(\varphi)\bigr|
\le C\Bigl(\frac{t}{p}+t\Bigr)d_1(\varphi,0)\le Ct\,d_1(\varphi,0),
\]
after absorbing constants into $C$.

By Remark~\ref{rem:relative_dist_attained} and $T^\mathbb{C}$-invariance of $\cM^\mathrm{rel}_{\mathrm{v},\mathrm{w},\chi_t}$, we can assume that $d_{1,T^\mathbb{C}}(\varphi,0)=d_1(\varphi,0)$.
We obtain
\[
\cM^\mathrm{rel}_{\mathrm{v},\mathrm{w},\chi_t}(\varphi)
\ge \delta_t\,d_{1,T^\mathbb{C}}(\varphi,0)-A-\frac{t}{p-t}\log \eta,
\]
with
\(
\delta_t=\delta\left(1-\frac{t}{p}\right)-Ct.
\)
\end{proof}

\begin{prop}\label{prop:cusp-to-conic}
Fix smooth weights $\mathrm{v},\mathrm{w}\in C^\infty(\ft^\vee)$ with $\mathrm{v},\mathrm{w}>0$ on the (normalized) moment polytope $P$.
Suppose that
\[
\cM^\mathrm{rel}_{\mathrm{v},\mathrm{w},\chi}(\varphi) \geq \delta\, d_{1,T^\mathbb{C}}(\varphi, 0) - A
\]
for all $\varphi \in \cET_0(X,\omega)$.
Let $\psi$ be a function with $0\leq \psi<1$ and let $\beta$ be a smooth $(1,1)$-form. Set
\[
\chi_t:=\chi-t\,(\ddc\log\psi+\,\beta),
\]
 and choose $t_0 >0$ sufficiently small such that $\chi_t$ is positive for all $t\in[0,t_0]$.
Then there exists a constant $C>0$ depending only on the background information $(\omega,\mathrm{v},\mathrm{w},\chi, \ddc\log\psi+\beta)$ such that
\[
\cM^\mathrm{rel}_{\mathrm{v},\mathrm{w},\chi_t}(\varphi) \geq (\delta-Ct)\, d_{1,T^\mathbb{C}}(\varphi, 0) - A
\]
for all $\varphi \in \cET_0(X,\omega)$.
\end{prop}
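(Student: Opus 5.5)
The strategy is to repeat the argument of Proposition~\ref{prop:coercivity-weighted-twisted}, now in the favourable direction: the perturbation of the twist increases the entropy reference weight, so no H\"older-type loss (no factor $1-t/p$) should occur. Write $\chi=\beta_0+\tfrac12\ddc f+\ddc\hat f$ as in \eqref{eq: ChiProp}. Then
\[
\chi_t=(\beta_0-t\beta)+\tfrac12\ddc f_t+\ddc\hat f,\qquad f_t:=f-2t\log\psi .
\]
Hence the decomposed finite-energy formula \eqref{eq:Mabuchi-finite-energy-decomposed} for $\chi_t$ has entropy $\ent^{f_t}_{\mathrm{v}}$ with reference measure $\nu_t:=e^{-f_t}\omega^n=\psi^{2t}e^{-f}\omega^n$. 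It also contains the Ricci-energy term $\enR^{\beta_0-t\beta+\ddc\hat f}_{\mathrm{v}}$ and the energy $\enE_{\mathrm{v}\mathrm{w}\ell^{\mathrm{ext}}_{\chi_t}}$. Note that $e^{-f_t}\le e^{-f}$ because $0\le\psi<1$, so $e^{-f_t}\in L^1$ and the extension of Proposition~\ref{prop:extension-mabuchi} applies.

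The first step is the entropy comparison. By identity \eqref{eq:bridge}, with $F=-2t\log\psi\ge0$,
\[
\Ent(\mu\mid\nu_t)=\Ent(\mu\mid e^{-f}\omega^n)+\int_X(-2t\log\psi)\,d\mu\ \ge\ \Ent(\mu\mid e^{-f}\omega^n).
\]
This holds for every $\mu=\MA_{\mathrm{v}}(\varphi)$. The inequality is also consistent with the $+\infty$ conventions, since absolute continuity with respect to $\nu_t$ and $e^{-f}\omega^n$ coincide off $\{\psi=0\}$; if $\{\psi=0\}$ is charged, then both sides are $+\infty$ or the left side is. Thus $\ent^{f_t}_{\mathrm{v}}(\varphi)\ge\ent^f_{\mathrm{v}}(\varphi)$, which is where the constant $A$ survives unchanged.

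The second step is to expand the remaining terms:
\[
\cM^{\mathrm{rel}}_{\mathrm{v},\mathrm{w},\chi_t}(\varphi)\ \ge\ \cM^{\mathrm{rel}}_{\mathrm{v},\mathrm{w},\chi}(\varphi)-\enE^{t\beta}_{\mathrm{v}}(\varphi)+\enE_{\mathrm{v}\mathrm{w}(\ell^{\mathrm{ext}}_{\chi_t}-\ell^{\mathrm{ext}}_\chi)}(\varphi).
\]
Both error terms vanish at $\varphi=0$ after the usual normalization. I bound $|\enE^{t\beta}_{\mathrm{v}}(\varphi)|\le Ct\,d_1(\varphi,0)$ as in the proof of Proposition~\ref{prop:coercivity-weighted-twisted}, via \eqref{equ:twenhold} and linearity in the twist. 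I bound the extremal correction by \eqref{equ:wenlip} together with Lemma~\ref{lem:ellext-affine}, which gives
\[
\ell^{\mathrm{ext}}_{\chi_t}-\ell^{\mathrm{ext}}_\chi=t\,I^{-1}\Bigl(\ell\mapsto\int_X\ell(m_\omega)\MA^{\ddc\log\psi+\beta}_{\mathrm{v}}(0)\Bigr).
\]
This is $O(t)$ in $C^0(P)$, with constant depending on $\ddc\log\psi+\beta$, provided this current has finite mass against $\omega^{n-1}$ and its moment map is integrable. Finally, by Remark~\ref{rem:relative_dist_attained} and $T^\C$-invariance of both relative functionals, I replace $\varphi$ by a minimizer $\sigma\cdot\varphi$ with $d_1(\sigma\cdot\varphi,0)=d_{1,T^\C}(\varphi,0)$. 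Combining with the hypothesis then gives the bound $(\delta-Ct)\,d_{1,T^\C}(\varphi,0)-A$.

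The main obstacle is the middle step: justifying the bounds for the singular twist $\ddc\log\psi$, both that $\MA^{\ddc\log\psi}_{\mathrm{v}}(0)$ is a well-defined order-zero current with finite mass (needed for Lemma~\ref{lem:ellext-affine}), and that the $T^\C$-normalization keeps $\sigma\cdot\varphi$ in $\cET_0$. The first I would handle by writing $\ddc\log\psi=\chi_t$-difference quotient of positive currents, so that it is a difference of positive closed currents with bounded mass. The second I would handle by noting that the relative functionals are invariant and that $\enE$ is $T^\C$-invariant under the chosen normalization, as in \cite{BJT}. The entropy step, by contrast, is essentially free.
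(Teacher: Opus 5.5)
Your proposal is correct and follows essentially the same route as the paper. In both, the key point is that the extra term $-t\int_X \log\psi\,\MA_{\mathrm{v}}(\varphi)$ is nonnegative. After that, the smooth twist $t\beta$ and the change of extremal function are bounded by $Ct\,d_1(\varphi,0)$ via \eqref{equ:twenhold}, \eqref{equ:wenlip} and Lemma~\ref{lem:ellext-affine}, and the argument ends with the $T^{\mathbb{C}}$-normalization. The only difference is where the nonnegativity is used. You absorb $-2t\log\psi$ into the entropy reference measure and compare entropies via \eqref{eq:bridge} directly in the finite-energy formula. The paper instead computes the difference on smooth potentials and then passes to the greatest lower semicontinuous extensions; your version is precisely the justification that passage implicitly requires.
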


\begin{proof}
In this proof, $C$ denotes a generic positive constant independent of $t$ and $\varphi$, which may change from line to line.
Write $\ell_\chi^{\mathrm{ext}}:=\ell^{\mathrm{ext}}_{\omega,\mathrm{v},\mathrm{w},\chi}$ and
$\ell_{\chi_t}^{\mathrm{ext}}:=\ell^{\mathrm{ext}}_{\omega,\mathrm{v},\mathrm{w},\chi_t}$.
By Lemma~\ref{lem:ellext-affine},
\[
\|\mathrm{w}(\ell_{\chi_t}^{\mathrm{ext}}-\ell_\chi^{\mathrm{ext}})\|_{C^0(P)}\le Ct.
\]

For any $\varphi \in \cH^T(X, \omega)$, the decomposition of the energy functionals gives
\[
\cM^\mathrm{rel}_{\mathrm{v},\mathrm{w},\chi_t}(\varphi) - \cM_{\mathrm{v},\mathrm{w} \ell_{\chi_t}^{\mathrm{ext}}}^{\chi}(\varphi)
= -t \int_X \log \psi \,\MA_{\mathrm{v}}(\varphi) - t \,\enE_{\mathrm{v}}^{\beta}(\varphi).
\]
Since $0<\psi<1$, we have $-\log\psi>0$, so the first term is non-negative. Passing to the greatest lsc extensions on
$\cET(X,\omega)$ yields, for all $\varphi\in\cET(X,\omega)$,
\[
\cM^\mathrm{rel}_{\mathrm{v},\mathrm{w},\chi_t}(\varphi) - \cM_{\mathrm{v},\mathrm{w} \ell_{\chi_t}^{\mathrm{ext}}}^{\chi}(\varphi)
\ge -t\,\enE_{\mathrm{v}}^{\beta}(\varphi).
\]
Using linearity in the weight,
\[
\cM_{\mathrm{v},\mathrm{w} \ell_{\chi_t}^{\mathrm{ext}}}^{\chi}(\varphi)
=\cM^\mathrm{rel}_{\mathrm{v},\mathrm{w},\chi}(\varphi)
+\enE_{\mathrm{v}\,\mathrm{w}\,(\ell_{\chi_t}^{\mathrm{ext}}-\ell_\chi^{\mathrm{ext}})}(\varphi).
\]
Applying the finite-energy estimates \eqref{equ:wenlip} and \eqref{equ:twenhold} to $\enE_{\mathrm{v}}^\beta$ and to $\enE_{\mathrm{v}\,\mathrm{w}\,(\ell_{\chi_t}^{\mathrm{ext}}-\ell_\chi^{\mathrm{ext}})}$ gives
\[
t\,\bigl|\enE_{\mathrm{v}}^\beta(\varphi)\bigr|\le Ct\, d_1(\varphi,0),
\qquad
\bigl|\enE_{\mathrm{v}\,\mathrm{w}\,(\ell_{\chi_t}^{\mathrm{ext}}-\ell_\chi^{\mathrm{ext}})}(\varphi)\bigr|\le Ct\, d_1(\varphi,0).
\]

and therefore
\[
\cM^\mathrm{rel}_{\mathrm{v},\mathrm{w},\chi_t}(\varphi)
\ge \cM^\mathrm{rel}_{\mathrm{v},\mathrm{w},\chi}(\varphi) - Ct\, d_1(\varphi,0).
\]

By Remark~\ref{rem:relative_dist_attained} and $T^\mathbb{C}$-invariance of $\cM^\mathrm{rel}_{\mathrm{v},\mathrm{w},\chi_t}$, we can assume that $d_{1,T^\mathbb{C}}(\varphi,0)=d_1(\varphi,0)$.
Hence, using the previous estimate and the coercivity hypothesis for $\cM^\mathrm{rel}_{\mathrm{v},\mathrm{w},\chi}$,
\begin{align*}
\cM^\mathrm{rel}_{\mathrm{v},\mathrm{w},\chi_t}(\varphi)
&\ge \cM^\mathrm{rel}_{\mathrm{v},\mathrm{w},\chi}(\varphi) - Ct\, d_1(\varphi,0)\\
&\ge \delta\, d_1(\varphi,0)-A - Ct\,d_{1,T^\mathbb{C}}(\varphi,0)\\
&\ge (\delta-Ct)\, d_{1,T^\mathbb{C}}(\varphi,0) - A,
\end{align*}

\end{proof}

\begin{remark}[Perturbations in several directions]\label{rem:multi-direction-perturb}
Notice $\delta_t$ depends linearly on $t$ in Propositions~\ref{prop:coercivity-weighted-twisted} and~\ref{prop:cusp-to-conic}. It is easy to see that by induction on the number of directions, the result extends to multi-parameter perturbations. Indeed, given smooth forms $B_i$ and quasi-psh functions $F_i$ ($1\le i\le N$), one sets
\[
\beta_{\mathbf t}:=\beta+\sum_{i=1}^N t_i \mathcal{B}_i,\qquad
f_{\mathbf t}:=f+\sum_{i=1}^N t_i F_i,\qquad
\chi_{\mathbf t}:=\beta_{\mathbf t}+\tfrac12\ddc f_{\mathbf t},
\]
and assumes $e^{-F_i}\in L^{p_i}(X,e^{-f}\omega^n)$ for some $p_i>1$. An analogous result holds for
$\cM^\mathrm{rel}_{\mathrm{v},\mathrm{w},\chi_{\mathbf t}}(\varphi)$.
\end{remark}

\subsubsection*{Applications to conic and cusp singularities}

Having established the general openness result for coercivity under perturbations (Proposition \ref{prop:coercivity-weighted-twisted} and Proposition \ref{prop:cusp-to-conic}), we now apply it to two important geometric settings: perturbations from cusp singularities to conic singularities, and perturbations of cone angles for conic singularities.

The following theorem shows that coercivity is an open condition with respect to the cone angle parameters.

\begin{thm}[Openness of coercivity in cone angles]\label{thm:coeropen0}
Fix smooth weights $\mathrm{v},\mathrm{w}\in C^\infty(\ft^\vee)$ with $\mathrm{v},\mathrm{w}>0$ on the (normalized) moment polytope $P$.
Let \(D = \sum_{j=1}^N (1-\alpha_j)D_j\) be a simple normal crossing divisor and consider the twist class $\chi = 2\pi [D]$, where $\boldsymbol{\alpha}=(\alpha_1,\dots,\alpha_N)\in [0,1]^{N}$. Suppose there exist $\delta>0$ and $A \in \R$ such that

$$\cM^\mathrm{rel}_{\mathrm{v},\mathrm{w},\chi}(\varphi) \geq \delta \, d_{1, T^\mathbb{C}}(\varphi,0) - A,$$

for all $\varphi \in \cET_0(X, \omega)$.
Then, for any $\hat{\delta} < \delta$, there exist $\hat{A} \in \R$ and a neighborhood $\mathcal{U}$ of $\boldsymbol{\alpha}$ in $[0,1]^{N}$ such that

$$\cM^\mathrm{rel}_{\mathrm{v},\mathrm{w},\hat{\chi}}(\varphi) \geq \hat{\delta} \, d_{1, T^\mathbb{C}}(\varphi,0) - \hat{A}$$

for all $\varphi \in \cET_0(X,\omega)$ and any $\hat{\boldsymbol{\alpha}} \in \mathcal{U}$, where $\hat{\chi}=\sum_{j=1}^N 2\pi(1-\hat{\alpha}_j)[D_j]$. Moreover, $\delta-\hat \delta$ is linearly controlled by $\boldsymbol{\alpha} - \hat{\boldsymbol{\alpha}}$.

\end{thm}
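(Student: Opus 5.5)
The plan is to reduce Theorem~\ref{thm:coeropen0} to the two one-parameter perturbation results, Proposition~\ref{prop:coercivity-weighted-twisted} and Proposition~\ref{prop:cusp-to-conic}, applied one divisor component at a time. The multi-parameter version is then assembled as in Remark~\ref{rem:multi-direction-perturb}. Fix $\hat{\boldsymbol\alpha}$ near $\boldsymbol\alpha$ and write $\hat\alpha_j=\alpha_j+s_j$. Then
\[
\hat\chi-\chi=-\sum_j 2\pi s_j[D_j]
=-\sum_j s_j\Bigl(\tfrac12\ddc\log|s_{D_j}|^2_{h_j}+\theta_j\Bigr),
\]
where the identity $2\pi[D_j]=\frac12\ddc\log|s_{D_j}|^2_{h_j}+\theta_j$ comes from the Lelong--Poincar\'e equation. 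Each component therefore produces a perturbation of the form $\beta_t=\beta+t\mathcal B$, $f_t=f+tF$, with the following data:
\begin{itemize}
\item $t=|s_j|$;
\item $\mathcal B=\mp\theta_j$;
\item $F=\mp\log|s_{D_j}|^2_{h_j}$, which is quasi-psh up to sign.
\end{itemize}
The cusp potentials $\hat f$ are left untouched, so the decomposition \eqref{eq: ChiProp} of $\hat\chi$ differs from that of $\chi$ only in $\beta$ and $f$.

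\textbf{Case $s_j>0$ (increasing the angle).} This reduces the coefficient $1-\alpha_j$ and covers in particular the cusp-to-conic direction $\alpha_j=0$. Here $f_t=f-t\log|s_{D_j}|^2$, so $e^{-f_t}=e^{-f}|s_{D_j}|^{2t}\le e^{-f}$ since $|s_{D_j}|<1$. This is exactly the setting of Proposition~\ref{prop:cusp-to-conic} with $\psi=|s_{D_j}|^2_{h_j}$, which satisfies $0\le\psi<1$, and with $\beta=\theta_j$. That proposition gives the bound $\delta-Ct$ with the same constant $A$ and no integrability hypothesis. This is important, because near a cusp component $e^{-f}$ carries the Poincar\'e factor $|s|^{-2}\log^{-2}$, which has no room for a negative power of $|s|$.

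\textbf{Case $s_j<0$ (decreasing the angle).} This is only possible when $\alpha_j>0$, i.e.\ the component is conic. Here $F=\log|s_{D_j}|^2_{h_j}$, and I would use Proposition~\ref{prop:coercivity-weighted-twisted}. Its integrability hypothesis (i) requires $|s_{D_j}|^{-2p}e^{-f}\in L^1$. Near $D_j$ the function $e^{-f}$ behaves like $|s_{D_j}|^{-2(1-\alpha_j)}$ times factors in the other variables. Because $D$ has simple normal crossings, the integral splits locally as a product, so it suffices that $p<\alpha_j$ in the relevant sense.

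More precisely, one applies the proposition with small $t$ and integrability exponent $p/t$ for $e^{-tF}$. Lemma~\ref{lemma:Ent-compare} only needs $e^{-qtF}\in L^1(e^{-f}\omega^n)$ for some $q>1$, i.e.\ $qt<\alpha_j$. I expect this exponent bookkeeping to be the main obstacle. Proposition~\ref{prop:coercivity-weighted-twisted} is stated with $F$ multiplied by $t$, so one must rescale: take $F'=\epsilon\log|s_{D_j}|^2$ with $\epsilon<\alpha_j$ so that (i) holds with some $p>1$, and then use the parameter $t'=|s_j|/\epsilon$. This still gives $\delta_{t'}=\delta(1-t'/p)-Ct'$, which is linear in $|s_j|$, and $A_{t'}\to A$.

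\textbf{Iteration.} After treating one component, the new functional is coercive with constants $(\delta',A')$. The background data has changed, but only continuously, and the integrability exponents for the remaining conic components are unchanged, since $|s_{D_j}|$ for different $j$ appear as independent local coordinates. The constants $C$ depend continuously on $\beta$, $\theta_j$ and the extremal functions via Lemma~\ref{lem:ellext-affine}, so they stay uniform on a small neighborhood. Applying the component steps successively for $j=1,\dots,N$ gives
\[
\hat\delta\ge\delta-C\sum_j|\hat\alpha_j-\alpha_j|,
\]
with $\hat A$ bounded. Given any $\hat\delta<\delta$, it then suffices to choose $\mathcal U$ so small that $C\sum_j|\hat\alpha_j-\alpha_j|\le\delta-\hat\delta$. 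This yields both the coercivity statement and the linear control of $\delta-\hat\delta$ by $\boldsymbol\alpha-\hat{\boldsymbol\alpha}$.
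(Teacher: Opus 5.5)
Your proposal is correct and follows essentially the same route as the paper. You handle increasing angles via Proposition~\ref{prop:cusp-to-conic}, and decreasing a conic angle via Proposition~\ref{prop:coercivity-weighted-twisted} after rescaling to $F=\epsilon\log|s_{D_j}|^2_{h_j}$ with $\epsilon<\alpha_j$; the paper takes $\epsilon=\alpha/2$, and your condition $p\epsilon<\alpha_j$ correctly yields $p<2$ there. You then treat several components by iterating as in Remark~\ref{rem:multi-direction-perturb}.

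There is one bookkeeping slip. Since $2\pi[D_j]=\tfrac12\ddc\log|s_{D_j}|^2_{h_j}+\theta_j$ while $\chi_t=\chi-t(\ddc\log\psi+\beta)$, you need $\psi=|s_{D_j}|_{h_j}$ rather than $|s_{D_j}|^2_{h_j}$; alternatively, keep $\psi=|s_{D_j}|^2_{h_j}$ and halve $t$ while doubling $\beta$.
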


\begin{proof}


By Remark~\ref{rem:multi-direction-perturb}, we may assume without loss of generality that
\[
D=(1-\alpha_1)D_1,
\] i.e. $D$ has only one component, and we are perturbing the single angle $\alpha_1$.

For simplicity, we henceforth view $D$ as a smooth hypersurface of $X$ and set
\[
\chi:=2\pi(1-\alpha)[D],\qquad \hat\chi:=2\pi(1-\hat\alpha)[D].
\]
We write their Ricci decompositions as
\[
\chi=(1-\alpha)\theta+\tfrac12\ddc f,\qquad \hat\chi=(1-\hat\alpha)\theta+\tfrac12\ddc \hat f,
\]
where, for a defining section $s_D$ of $\mathcal O(D)$ endowed with a smooth Hermitian metric $h$,
\[
f=(1-\alpha)\log|s_D|_h^{2},\qquad
\hat f=(1-\hat\alpha)\log|s_D|_h^{2},
\]
with $\theta$ the curvature form of $h$. In particular,
\[
f-\hat f=(\hat\alpha-\alpha)\log|s_D|_h^{2}.
\]

\paragraph{Case 1.}
Assume $1\ge \hat\alpha\ge \alpha$. Choosing $h$ so that $|s_D|_h^{2}\le 1$ on $X$, we have $\log|s_D|_h^{2}\le 0$.
Applying Proposition~\ref{prop:cusp-to-conic} then yields
\[
\cM^\mathrm{rel}_{\mathrm{v},\mathrm{w},\hat\chi}(\varphi)
\ge \bigl(\delta-C(\hat\alpha-\alpha)\bigr)\, d_{1,T^\C}(\varphi,0)-A,
\]
for all $\varphi\in\cET_0(X,\omega)$.
Hence, for any fixed $\hat\delta<\delta$, if $\hat\alpha\ge\alpha$ is sufficiently close to $\alpha$ so that
$C(\hat\alpha-\alpha)\le \delta-\hat\delta$, we get
\[
\cM^\mathrm{rel}_{\mathrm{v},\mathrm{w},\hat\chi}(\varphi)\ge \hat\delta\, d_{1,T^\C}(\varphi,0)-A.
\]

\paragraph{Case 2.}
Assume $\alpha/2\le \hat\alpha<\alpha$. Define the reference current at angle $\alpha/2$ by
\[
\chi_{1/2}:=2\pi\Bigl(1-\frac{\alpha}{2}\Bigr)[D]
=\Bigl(1-\frac{\alpha}{2}\Bigr)\theta+\tfrac12\ddc \Bigl(1-\frac{\alpha}{2}\Bigr)\log|s_D|_h^{2},
\]

Set
\[
F:=\frac{\alpha}{2}\log|s_D|_h^{2},\qquad 
\mathcal B:=\frac{\alpha}{2}\theta,
\]
and
\[
t:=\frac{2(\alpha-\hat\alpha)}{\alpha}\in(0,1],\qquad
\eta:=\int_X e^{-pF}e^{-f}\,\omega^n.
\]
Then
\[
\hat f=f+tF,\qquad 
(1-\hat\alpha)\theta=(1-\alpha)\theta+t\mathcal B,
\]
and consequently
\[
\hat\chi=\chi+t\Bigl(\mathcal B+\tfrac12\ddc F\Bigr)
=\bigl((1-\alpha)\theta+t\mathcal B\bigr)+\tfrac12\ddc\bigl(f+tF\bigr).
\]

By Lemma~\ref{lem:integrability-SNC}, $e^{-F}\in L^p(X,e^{-f}\omega^n)$ for every $1<p<\frac{2}{\alpha}$.
Fix any $p\in(1,\frac{2}{\alpha})$.
Applying Proposition~\ref{prop:coercivity-weighted-twisted} (with this $t$ and $p$) gives, for all
$\varphi\in\cET_0(X,\omega)$,
\[
\cM^\mathrm{rel}_{\mathrm{v},\mathrm{w},\hat \chi}(\varphi)
\ge \delta_{t}\, d_{1,T^\C}(\varphi,0)-A-\frac{t}{p-t}\log \eta,
\]
Set
\[
\hat A:=A+\frac{t}{p-t}\log\eta,\qquad 
\hat\delta:=\delta\Bigl(1-\frac{t}{p}\Bigr)-Ct.
\]
Then \(\hat\alpha\to\alpha^{-}\) implies \(t\to 0\) (and hence \(t/p\to 0\)), so that
\[
\hat\delta\to\delta
\qquad\text{and}\qquad
\hat A\to A.
\]
\end{proof}

\begin{remark}
    Although we assumed $\delta > 0$, the proofs in this section remain valid for any $\delta \in \mathbb{R}$, as the derived estimates are independent of the sign of the coercivity constant.
\end{remark}

\section{Appendix}
\subsection{The Log-Mabuchi energy of Poincar\'e type K\"ahler metrics}\label{subsec:log-mabuchi-poincare}
\subsubsection{Potential space of Poincar\'e type K\"ahler metrics}\label{sec:poincare-type}

We briefly recall the construction of the model metric $\omega_{\rm mod}$; let $(X, \omega)$ be a compact K\"ahler manifold of complex dimension $n$, in which we consider a divisor $D$ with \textit{simple normal crossings} with decomposition $D=\sum_{j=1}^N D_j$ into smooth irreducible components. 
For each $j=1,\dots,N$, let $s_{D_j}$ be a holomorphic defining section of $\mathcal{O}(D_j)$, and let $h_j$ be a Hermitian metric on $\mathcal{O}(D_j)$. We can assume that
$$
\rho_j:=-\log(|s_{D_j}|_{h_j}^2)\geq 1
$$
outside $D_j$.
Now let $\lambda$ be a nonnegative real parameter. If we set $u_j:=\log(\lambda+\rho_j)=\log\big(\lambda-\log(|s_{D_j}|_{h_j}^2)\big)$, one has:
 \begin{lemma}[\cite{Auvray}, Lemma 1.1]\label{lem:modelmetric}
  Let $A>0$. For sufficiently big $\lambda$ (depending on $A$ and $\omega$), the (1,1)-form $\omega-A\frac{1}{2}\ddc u_j$ defines a K\"ahler form on $X\backslash D_j$.
 \end{lemma}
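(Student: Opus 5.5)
The claim is that $\omega-\frac{A}{2}\ddc u_j$ is a K\"ahler form on $X\setminus D_j$ once $\lambda$ is large. The plan is to compute $\ddc u_j$ explicitly on $X\setminus D_j$ and show that the term which is not bounded below has a sign and is harmless. Writing $\rho=\rho_j$, $\Theta_j:=\ddc\log|s_{D_j}|^2_{h_j}$ (a smooth form on $X$, minus the Chern curvature form up to normalization, by Poincar\'e--Lelong), we have $\ddc\rho=-\Theta_j$ away from $D_j$. Then
\[
\ddc u_j=\ddc\log(\lambda+\rho)=\frac{\ddc\rho}{\lambda+\rho}-\frac{d\rho\wedge\dc\rho}{(\lambda+\rho)^2}
=-\frac{\Theta_j}{\lambda+\rho}-\frac{d\rho\wedge\dc\rho}{(\lambda+\rho)^2}.
\]
With our convention $\ddc=2i\partial\dbar$, $d\rho\wedge\dc\rho=2i\,\partial\rho\wedge\dbar\rho\ge0$.

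Consequently
\[
\omega-\tfrac{A}{2}\ddc u_j=\omega+\frac{A}{2}\frac{\Theta_j}{\lambda+\rho}+\frac{A}{2}\frac{d\rho\wedge\dc\rho}{(\lambda+\rho)^2}.
\]
The last term is a semipositive $(1,1)$-form on $X\setminus D_j$, so it suffices to make the first two terms positive. Since $X$ is compact and $\Theta_j$ is smooth on all of $X$, there is a constant $K$ (depending on $h_j$ and $\omega$) with $-K\omega\le\Theta_j\le K\omega$. Using $\rho\ge1$, hence $\lambda+\rho\ge\lambda+1$, we get
\[
\omega+\frac{A}{2}\frac{\Theta_j}{\lambda+\rho}\ge\Bigl(1-\frac{AK}{2(\lambda+1)}\Bigr)\omega.
\]
Choosing $\lambda> AK-1$ gives a lower bound $\ge\frac12\omega>0$ on $X\setminus D_j$. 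This proves that the form is K\"ahler there: it is smooth, closed ($\ddc$ of a smooth function off $D_j$), and positive definite.

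The only delicate point is the sign convention. One has to check that the gradient term enters with a nonnegative sign after multiplying by $-\frac{A}{2}$, which the computation above confirms. Everything else is uniform because $\Theta_j$ is smooth across $D_j$ and the factor $(\lambda+\rho)^{-1}$ is bounded by $(\lambda+1)^{-1}$. If one also wants comparability with a Poincar\'e model metric, one would note that the gradient term is exactly the cusp part $\frac{|d\log|s|^2|^2}{(\lambda+\rho)^2}$, but this is not needed for positivity.
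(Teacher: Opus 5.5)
Your proof is correct and is essentially the paper's argument: both expand $\ddc\log(\lambda+\rho_j)$ into the semipositive gradient term $d\rho_j\wedge\dc\rho_j/(\lambda+\rho_j)^2$ and the term $\ddc\rho_j/(\lambda+\rho_j)$, which is bounded by a multiple of $\omega/(\lambda+1)$ and absorbed by taking $\lambda$ large. Your use of $\rho_j\ge 1$ to get a uniform bound is cleaner than the paper's appeal to $\rho_j\to+\infty$. One small fix: define $\Theta_j$ as the smooth form extending $\ddc\log|s_{D_j}|^2_{h_j}$ from $X\setminus D_j$, not as that current on all of $X$, which also contains a $[D_j]$ term.
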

 This follows from a direct computation: 
 \begin{equation*}
-A \frac{1}{2} dd^c u_j = \frac{ A\frac{1}{2} d\rho_j \wedge d^c\rho_j }{(\lambda+\rho_j)^2} - \frac{A \frac{1}{2} dd^c \rho_j}{\lambda+\rho_j}
 \end{equation*}
The first summand is a nonnegative (1,1)-form, whereas $\pm\tfrac{A\frac{1}{2}\ddc\rho_j}{\lambda+\rho_j}\leq \tfrac{CA}{\lambda+\rho_j}\omega$ in the sense of (1,1)-forms where $C$ is such that $\pm \frac{1}{2}\ddc\rho_j \leq C\omega$ on $X$. Since $\rho_j \to +\infty$ near $D_j$, we have $\omega-\tfrac{A\frac{1}{2}\ddc\rho_j}{\lambda+\rho_j}>0$ on $X\backslash D_j$ when $\lambda$ is large enough. $\square$

~

By choosing $A_1,\dots, A_N>0$, replacing $\omega$ with $\tfrac{1}{N}\omega$, and increasing $\lambda$ if necessary, we obtain $\tfrac{1}{N}\omega-\frac{1}{2}\ddc u_j>0$ on $X\backslash D_j$ for $j=1,\dots,N$, hence
$$
\omega_{\rm mod} = \omega - \frac{1}{2}\ddc\mathfrak{u} = \sum_{j=1}^N \left(\frac{1}{N}\omega - A_j \frac{1}{2}\ddc u_j\right).
$$ 
(recall $\mathfrak{u}=\sum_{j=1}^N A_j u_j$) defines a genuine K\"ahler form on $X\setminus D$. 
 
\begin{defi}[Poincar\'e type K\"ahler metric {\cite[Def. 0.1]{Auvray}, \cite[Def. 1.1]{Auvray1}}]
\label{def: poincaretypemetdefn}
Let $X$ be a compact complex manifold and let $D$ be a smooth irreducible divisor in $X$. Let $\Omega \in H^2(X, \mathbb{R})$ be a K\"ahler class. A smooth, closed, real $(1,1)$ form $\omega_\varphi$ on $X \setminus D$ is a \emph{Poincar\'e type K\"ahler metric} if

	\begin{itemize}

		\item $\omega_\varphi$ is quasi-isometric to $\om_{\rm mod}$. That is, there exists a $C$ such that
		\begin{align*} C^{-1} \om_{\rm mod} \leq \omega_\varphi \leq C \om_{\rm mod}.
		\end{align*}

		Moreover, the class of $\omega_\varphi$ is $\Omega$ if

		\item $\omega_\varphi = \omega + \frac{1}{2}\ddc \varphi$ for a smooth function $\varphi$ on $X \setminus D$ with $| \nabla_{\omega_{\rm mod}}^j \varphi |$ bounded for any $j \in \mathbb{N}^*$ and $\varphi = O(\mathfrak{u})$.

	\end{itemize}
    Such $\f$ is called a \emph{Poincar\'e type K\"ahler potential}. The space of Poincar\'e type K\"ahler potentials is denoted by $\tmom$.
\end{defi}

\noindent By Lemma \ref{lem:poincare-E1}, the space $\tmom$ of Poincar\'e type potentials is contained in the finite energy space $\mathcal{E}^1$. We denote $\tmom^T:= \tmom \cap \cET(X, \omega)$ the space of $T$-invariant Poincar\'e type K\"ahler potentials.

Without loss of generality, we can assume that the K\"ahler metric $\om$ and the potential $\mathfrak{u}$ are $T$-invariant.

\begin{lemma}\label{lem:poincare-finite-volume}
The volume of any Poincar\'e type metric $\om_\f$ is finite. More precisely, the volume is the topological constant $[\om]^n$.
\end{lemma}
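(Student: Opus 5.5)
The plan is to prove two things in turn: that $\int_{X\setminus D}\omega_\varphi^n$ is finite, and that it equals $\int_X\omega^n=[\omega]^n$. The second uses a cutoff and integration by parts, with all error terms controlled by the finite volume of the model metric $\omega_{\rm mod}$.

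\emph{Finiteness.} By the quasi-isometry in Definition~\ref{def: poincaretypemetdefn}, $\omega_\varphi^n\le C^n\omega_{\rm mod}^n$ on $X\setminus D$. So it suffices to show that $\omega_{\rm mod}$ has finite volume, and this is a local statement near $D$. Near a point where exactly $D_1,\dots,D_m$ meet, choose coordinates $z$ with $D_j=\{z_j=0\}$, so that $\rho_j=-\log|z_j|^2+O(1)$. The computation following Lemma~\ref{lem:modelmetric} gives
\[
\omega_{\rm mod}\le C\Big(\sum_{j\le m}\frac{i\,dz_j\wedge d\bar z_j}{|z_j|^2(\lambda+\rho_j)^2}+\sum_{k>m} i\,dz_k\wedge d\bar z_k\Big).
\]
Hence
\[
\omega_{\rm mod}^n\le C\prod_{j\le m}\frac{1}{|z_j|^2\log^2|z_j|^2}\,d\mathrm{Leb}.
\]
This is integrable, because in polar coordinates $\int_0^{1/2}\frac{r\,dr}{r^2\log^2 r}<\infty$. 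The same computation shows that $e^{-f}\omega^n$ is integrable, which is the form in which this lemma is used in Corollary~\ref{prop:extention-K-energy-divisor}.

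\emph{Identification of the volume.} Write
\[
\omega_\varphi^n-\omega^n=\tfrac12\ddc\varphi\wedge\Theta,\qquad \Theta:=\sum_{k=0}^{n-1}\omega_\varphi^k\wedge\omega^{n-1-k}.
\]
The form $\Theta$ is closed on $X\setminus D$ and satisfies $0\le\Theta\le C\,\omega_{\rm mod}^{n-1}$, using $\omega\le C\omega_{\rm mod}$. Fix a smooth function $\chi:\R\to[0,1]$ equal to $1$ on $(-\infty,1]$ and to $0$ on $[2,\infty)$. Set $\mathfrak v:=\sum_j u_j$ and $\chi_\varepsilon:=\chi(\varepsilon\mathfrak v)$. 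Then $\chi_\varepsilon$ has compact support in $X\setminus D$ and increases to $1$ as $\varepsilon\to0$.

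Since $du_j=-d\rho_j/(\lambda+\rho_j)$ has bounded $\omega_{\rm mod}$-norm, we get $|d\chi_\varepsilon|_{\omega_{\rm mod}}\le C\varepsilon$. By the definition of Poincar\'e type potentials, $|d\varphi|_{\omega_{\rm mod}}$ is also bounded. Because $\chi_\varepsilon$ has compact support, Stokes' theorem gives
\[
\Big|\int_{X\setminus D}\chi_\varepsilon\,(\omega_\varphi^n-\omega^n)\Big|=\tfrac12\Big|\int_{X\setminus D} d\chi_\varepsilon\wedge\dc\varphi\wedge\Theta\Big|\le C\varepsilon\int_{X\setminus D}\omega_{\rm mod}^n\longrightarrow0 .
\]
By monotone convergence (or dominated convergence, using the finiteness step), the left-hand side tends to $\big|\int_{X\setminus D}\omega_\varphi^n-\int_{X\setminus D}\omega^n\big|$. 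Finally, $D$ has Lebesgue measure zero, so $\int_{X\setminus D}\omega^n=[\omega]^n$.

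The main obstacle is making the cutoff argument fully rigorous near the crossings of $D$. This needs two uniform bounds: the pointwise bound $|d\mathfrak v|_{\omega_{\rm mod}}\le C$ and the bound $\Theta\le C\omega_{\rm mod}^{n-1}$, both holding near every stratum of the SNC divisor. Both follow from the explicit local form of $\omega_{\rm mod}$ above, which is a product of punctured-disc Poincar\'e metrics, so I expect this step to be routine but to require care.
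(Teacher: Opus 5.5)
Your proposal is correct, and it proves more than the paper's own argument.

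\textbf{Finiteness.} The paper's proof treats only this part. It assumes $D=\{z_1=0\}$ is smooth in a chart and uses the comparison $\omega_\varphi^n\asymp |z_1|^{-2}\log^{-2}|z_1|\,\omega^n$. It then evaluates $\int_0^\varepsilon \frac{dr}{r\log^2 r}<\infty$. The equality of the volume with $[\omega]^n$ is stated in the lemma but never proved. Your finiteness step reduces to the same elementary integral. However, you obtain it from $\omega_\varphi\le C\omega_{\rm mod}$ and a product bound. This covers the crossings of the SNC divisor by Fubini, rather than by the paper's ``without loss of generality $D$ is smooth'' reduction.

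\textbf{Identification of the volume.} Your second step is the genuinely new ingredient: the cutoff $\chi(\varepsilon\mathfrak v)$, Stokes on $X\setminus D$, and the estimate $|d\chi_\varepsilon\wedge d^c\varphi\wedge\Theta|\le C\varepsilon\,\omega_{\rm mod}^n$. It uses the first-derivative bound on $\varphi$ from the definition of Poincar\'e type potentials, which the paper's argument never invokes. It also yields the full Monge--Amp\`ere mass property. That property is what places Poincar\'e potentials in the full-mass class, and Lemma~\ref{lem:poincare-E1} tacitly presupposes it when it only checks $\int_X|\varphi|\,\omega_\varphi^n<\infty$.

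\textbf{The obstacle you flag.} This is easier than you expect. The computation after Lemma~\ref{lem:modelmetric} uses $\tfrac12 d\rho_j\wedge d^c\rho_j=i\partial\rho_j\wedge\bar\partial\rho_j$ and $\pm\tfrac12\ddc\rho_j\le C\omega$. Together these give, for $\lambda$ large, the global lower bound
\[
\omega_{\rm mod}\ \ge\ \tfrac12\,\omega+\sum_j A_j\, i\partial u_j\wedge\bar\partial u_j \quad\text{on } X\setminus D .
\]
This single inequality yields both $|du_j|_{\omega_{\rm mod}}\le C$ and $\omega\le 2\,\omega_{\rm mod}$ everywhere. No stratum-by-stratum analysis near the crossings is needed.

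\textbf{Minor points.} The sign should be $du_j=+\,d\rho_j/(\lambda+\rho_j)$. Take $\chi$ nonincreasing if you want $\chi_\varepsilon\nearrow 1$ for monotone convergence; dominated convergence, using your finiteness step, works regardless.
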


\begin{proof}
The finiteness of volume follows from an elementary calculation in local coordinates. Without loss of generality, we may assume that $D$ is smooth and is defined by $\{z_1=0\}$ in a local chart $(z_1,\ldots,z_n)$. By the asymptotic behavior of Poincar\'e type metrics (Definition~\ref{def: poincaretypemetdefn}), the volume form satisfies
\begin{equation}\label{eq:MA-asymptotic}
C^{-1}\frac{1}{|z_1|^2\log^2(|z_1|)}\omega^n \leq \omega_\varphi^n \leq C\frac{1}{|z_1|^2\log^2(|z_1|)}\omega^n
\end{equation}
for some constant $C>0$. Thus it suffices to show that
\begin{equation}\label{eq:poincare-volume-integral}
\int_{\mathbb{D}_\varepsilon} \frac{1}{|z|^2\log^2(|z|)} i dz \wedge d\bar{z} < +\infty
\end{equation}
for sufficiently small $\varepsilon > 0$, where $\mathbb{D}_\varepsilon = \{z \in \mathbb{C} : |z| < \varepsilon\}$. Using polar coordinates $z=re^{i\theta}$, we have
\begin{align*}
\int_{\mathbb{D}_\varepsilon} \frac{1}{|z|^2\log^2(|z|)} i dz \wedge d\bar{z} &= \int_0^{2\pi} \int_0^{\varepsilon} \frac{1}{r^2\log^2(r)} 2r \, dr \, d\theta \\
&= 2\pi \int_0^{\varepsilon} \frac{1}{r\log^2(r)} dr \\
&= 2\pi \int_{-\infty}^{\log(\varepsilon)} \frac{1}{u^2} du \quad (u=\log(r)) \\
&= 2\pi \left[ -\frac{1}{u} \right]_{\log(\varepsilon)}^{-\infty} = \frac{2\pi}{|\log(\varepsilon)|} < +\infty.
\end{align*}
This establishes the finiteness of volume near the divisor, and thus the total volume is finite.
\end{proof}

For the weighted Monge--Amp\`ere operator of Poincar\'e type metrics, the asymptotic estimate~\eqref{eq:MA-asymptotic} established extends to the general case with weight $\mathrm{v}$: there exists a constant $C>0$ such that
\begin{equation}\label{eq:MA-v-asymptotic}
C^{-1}\frac{1}{|s_D|_h^2 (\log|s_D|_h)^2}\omega^n \leq \MA_{\mathrm{v}}(\f) \leq C\frac{1}{|s_D|_h^2 (\log|s_D|_h)^2}\omega^n
\end{equation}
near the divisor. By Corollary \ref{decomp}, the Ricci curvature satisfies
\begin{equation}\label{eq:Ricci-asymptotic}
-C \omega_{\rm mod} \leq \Ric(\omega_\varphi) - 2\pi[D] \leq C\omega_{\rm mod}.
\end{equation}
Combining these two-sided estimates, we obtain the bounded scalar curvature property
\begin{equation}\label{eq:scalar-curvature-bounded}
S_{\mathrm{v}}^{2\pi[D]}(\omega_{\f}):= \frac{\MA_{\mathrm{v}}^{\Ric_{\mathrm{v}}(\om_\f)-2\pi[D]}(\f)}{\MA_{\mathrm{v}}(\f)}=O(1).
\end{equation}

We now specialize the general framework to Poincar\'e type metrics by fixing $\chi=2\pi[D]$, where $D=\sum_{j=1}^N D_j$ is a simple normal crossing divisor. For each $j=1,\dots,N$, let $s_{D_j}$ be a holomorphic defining section of $\mathcal{O}(D_j)$, and let $h_j$ be a Hermitian metric on $\mathcal{O}(D_j)$ with curvature form $\theta_j$.
By the Lelong--Poincar\'e formula,
\[
2\pi [D_j] = \frac{1}{2}\ddc \log |s_{D_j}|_{h_j}^2 + \theta_j,
\]
we write
\[
\chi = \beta + \frac{1}{2}\ddc f,
\]
with
\[
f = \log|s_D|_h^2,\qquad
\beta = \sum_{j=1}^N \theta_j.
\]
For $\f \in C^\infty(X)^T$, the $2\pi[D]$-twisted Mabuchi functional decomposes as
\[
\cM_{\mathrm{v},\mathrm{w}}^{2\pi[D]}(\f)=\ent_{\mathrm{v}}^{\log|s_D|_h^2}(\f)+\enR_{\mathrm{v}}^{\ddc \log h}(\f)+\enE_{\mathrm{v}\,\mathrm{w}}(\f).
\]
We will show below that this formula remains well-defined for $\f \in \tmom^T$.

\begin{lemma}\label{lem:integrability-SNC}

Let $X$ be a compact K\"ahler manifold and $D = \sum_{j=1}^N D_j$ be a simple normal crossing divisor. Let $u = \sum_{j=1}^N \gamma_j \log |s_{D_j}|_{h_j}^2$ for some real constants $\gamma_j$. Then $e^{-u} \in L^1(X, \omega^n)$ if and only if $\gamma_j < 1$ for all $j=1, \dots, N$.

\end{lemma}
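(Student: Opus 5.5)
The plan is to reduce to a local computation near the strata of $D$ using a partition of unity. Since $X$ is compact and $D$ is simple normal crossing, $X$ is covered by finitely many coordinate charts $U$ in which at most $n$ components meet. In each chart we can choose holomorphic coordinates $(z_1,\dots,z_n)$ with $D\cap U=\{z_1\cdots z_m=0\}$, where $D_{j_k}=\{z_k=0\}$ for $k=1,\dots,m$ and all other components avoid $U$. On a slightly shrunk chart, $|s_{D_{j_k}}|_{h_{j_k}}^2=|z_k|^2e^{-\phi_k}$ with $\phi_k$ smooth and bounded. The factors $e^{\gamma\phi_k}$ and the density of $\omega^n$ against Lebesgue measure are bounded above and below. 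Hence
\[
e^{-u}\,\omega^n\asymp \prod_{k=1}^m |z_k|^{-2\gamma_{j_k}}\, d\lambda(z)
\]
on such a chart. Components not meeting $U$ contribute a bounded factor.

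For the \emph{if} direction, assume $\gamma_j<1$ for all $j$. By Fubini, the local integral over a polydisc splits into a product of one-variable integrals $\int_{|z_k|<r}|z_k|^{-2\gamma}\,i\,dz_k\wedge d\bar z_k$. In polar coordinates each equals $4\pi\int_0^r \rho^{1-2\gamma}\,d\rho$, which is finite exactly when $\gamma<1$. The remaining coordinates range over a bounded set. Summing over the finite cover gives $e^{-u}\in L^1(X,\omega^n)$. Charts away from $D$ are trivial, since $e^{-u}$ is continuous and bounded there.

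For the \emph{only if} direction, suppose $\gamma_{j}\ge 1$ for some $j$. We pick a point $x\in D_j$ lying on no other component; such points exist because the intersections $D_j\cap D_i$ have codimension at least one in $D_j$. Near $x$ we take coordinates with $D_j=\{z_1=0\}$, and the other factors of $e^{-u}$ are bounded below by a positive constant. The local integral then dominates $c\int_{|z_1|<r}|z_1|^{-2\gamma_j}$, and the polar computation shows this diverges when $\gamma_j\ge1$. Hence $e^{-u}\notin L^1$.

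The only real care is in the bookkeeping: the comparison constants must be uniform on each shrunk chart, and the convention $\log|s|^2$ (rather than $\log|s|$) must be tracked so that the threshold is exactly $\gamma_j<1$. No step is a genuine obstacle. Negative $\gamma_j$ are harmless, because the corresponding factors are bounded.
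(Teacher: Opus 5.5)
Your proof is correct and follows the same route as the paper. Both localize in SNC charts, compare $e^{-u}\omega^n$ with $\prod_k |z_k|^{-2\gamma_{j_k}}$ times Lebesgue measure, split the integral by Fubini--Tonelli, and conclude from $\int_0^r \rho^{1-2\gamma}\,d\rho<\infty \iff \gamma<1$. Your handling of necessity, at a point of $D_j$ lying on no other component, is a slightly more careful way to read off divergence than the paper's single product argument, but the substance is identical.
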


\begin{proof}

The integrability is a local property. Since $D$ has simple normal crossing support, for any point $x \in X$, there exists a local coordinate chart $(U, z_1, \dots, z_n)$ centered at $x$ such that $D$ is given by $\{z_1 \cdots z_k = 0\}$ for some $k \le n$. In this chart, the hermitian norm satisfies $|s_{D_j}|_{h_j}^2 \approx |z_j|^2$ (up to a smooth non-vanishing factor). The volume form satisfies $\omega^n \approx \prod_{m=1}^n \frac{i}{2} dz_m \wedge d\bar{z}_m$.

Thus, locally, the convergence of the integral is equivalent to the convergence of:

$$\int_{\mathbb{D}^n} \prod_{j=1}^k |z_j|^{-2\gamma_j} \prod_{m=1}^n \frac{i}{2} dz_m \wedge d\bar{z}_m.$$

By the Fubini-Tonelli theorem, this integral splits into a product of integrals over each coordinate disk $\mathbb{D}$. For the non-singular directions $j > k$, the integrals are trivial. For $j \le k$, using polar coordinates $z_j = r_j e^{i\theta_j}$, we have $\frac{i}{2} dz_j \wedge d\bar{z}_j = r_j dr_j d\theta_j$. The component integral becomes:

$$\int_0^1 r_j^{-2\gamma_j} r_j dr_j = \int_0^1 r_j^{1-2\gamma_j} dr_j.$$

This integral converges if and only if the exponent satisfies $1-2\gamma_j > -1$, which simplifies to $2\gamma_j < 2$, or $\gamma_j < 1$. Since this must hold for every chart covering $D$, the result follows.

\end{proof}

\begin{lemma}\label{lem:poincare-E1}
	The space $\tmom$ of Poincar\'e type potentials is contained in the finite energy space $\mathcal{E}^1(X, \omega)$.
	\end{lemma}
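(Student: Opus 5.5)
The plan is to show that every $\f\in\tmom$ is $\omega$-psh (in the appropriate normalization, $\omega+\ddc\f\ge0$ after accounting for the factor $\frac12$ in Definition~\ref{def: poincaretypemetdefn}), and that it has finite Monge--Amp\`ere energy. The key inputs are the bound $\f=O(\mathfrak{u})$ and the explicit form of the model potential. Since
$$\mathfrak{u}=\sum_j A_j\log(\lambda+\rho_j)$$
grows only like $\log\log(1/|s_{D_j}|)$ near $D$, the function $\f$ is bounded above on $X\setminus D$. It is also locally integrable with at worst $\log\log$ singularities.

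\textbf{Step 1: $\f$ extends as an $\omega$-psh function on $X$.} On $X\setminus D$ the form $\omega_\f$ is a genuine K\"ahler form, by quasi-isometry with $\om_{\rm mod}$. So $\f$ is $\omega$-psh on $X\setminus D$ and bounded above near $D$. A quasi-psh function bounded above near an analytic set extends uniquely across it (Grauert--Remmert type extension), giving $\f\in\PSH(X,\omega)$. Moreover $\omega_\f$ puts no mass on $D$ in the non-pluripolar sense. Its total non-pluripolar mass equals $\int_{X\setminus D}\omega_\f^n$, which is finite by Lemma~\ref{lem:poincare-finite-volume}. That lemma identifies this mass with $[\om]^n$, so $\f$ has full Monge--Amp\`ere mass, i.e. $\f\in\mathcal E(X,\omega)$.

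\textbf{Step 2: finite energy.} Recall that $\mathcal E^1$ is the set of $\f\in\mathcal E$ with $\int_X|\f|\,\omega_\f^n<\infty$. Combine the bound $|\f|\le C(1+\mathfrak{u})$ with the volume estimate \eqref{eq:MA-asymptotic}. After Fubini in snc coordinates, this reduces to finiteness of
$$\int_0^\varepsilon\frac{\log(\lambda-2\log r)}{r\log^2 r}\,dr.$$
Substituting $u=-\log r$ turns this into $\int^\infty \frac{\log u}{u^2}\,du<\infty$. At crossing points the model volume form is bounded by the product of the one-variable Poincar\'e densities. Each factor of $\mathfrak{u}$ then involves only one coordinate, and the same one-dimensional computation applies factor by factor.

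An alternative route is to show $\mathfrak{u}\in\mathcal E^1$ directly and then use the monotonicity property of $\mathcal E^1$ under the bound $\f\ge -C\mathfrak{u}-C$. This fails as stated, since $-\mathfrak{u}$ is not $\omega$-psh with the right sign. So I would instead use $\om_{\rm mod}=\omega-\frac12\ddc\mathfrak{u}$, so that $-\frac12\mathfrak{u}$ is an $\omega$-psh potential, and note $\f\ge C(-\frac12\mathfrak{u})-C$. If $-\frac12\mathfrak{u}\in\mathcal E^1$, then $\f\in\mathcal E^1$ by the standard comparison result for $\mathcal E^1$ (after rescaling so the constant in front is $1$, e.g. via convexity of $\mathcal E^1$). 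Membership $-\frac12\mathfrak{u}\in\mathcal E^1$ is exactly the integral computation above.

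\textbf{Main obstacle.} The delicate point is Step 1: justifying that the non-pluripolar Monge--Amp\`ere measure of the extension coincides with the smooth form $\omega_\f^n$ on $X\setminus D$ and charges nothing on $D$. Only then do the full mass and the energy integral reduce to the explicit computations. I would handle this using the definition of the non-pluripolar product as a limit over the plurifine open sets $\{\f>-k\}$, which exhaust $X\setminus D$. The growth control $\f=O(\mathfrak{u})$ makes the sets $\{\f\le -k\}$ shrink to $D$, so the measure is determined by $X\setminus D$.
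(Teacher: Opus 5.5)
Your Step~2 is exactly the paper's proof. The paper says it suffices to show $\int_X|\f|\,\om_\f^n<\infty$, then substitutes $u=-\log r$ in $\int_0^\varepsilon\frac{\log(-\log r)}{r\log^2 r}\,dr$ at smooth points of $D$. Your factor-by-factor treatment of crossings and your Step~1 (extension across $D$ and full mass) supply things the paper takes for granted. So the route is the same and the conclusion is correct, but two justifications in Step~1 need repair.

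\textbf{Upper bound.} $\f=O(\mathfrak u)$ does not make $\f$ bounded above. Since $\mathfrak u\to+\infty$ along $D$, it only gives $\f\le C\mathfrak u$. The upper bound has to come from plurisubharmonicity. On a punctured disc transverse to $D$, $v=g+\f$ (with $g$ a local potential of $\om$, in your normalization) is subharmonic and satisfies $|v|=O(\log\log(1/|z_1|))=o(\log(1/|z_1|))$. Apply the maximum principle to $v+\epsilon\log|z_1|$ and let $\epsilon\to0$; this gives $v\le\max_{|z_1|=r_0}v$, uniformly in the other variables (at crossings, iterate coordinate by coordinate). Only then can you invoke Grauert--Remmert.

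\textbf{Full mass.} Your ``main obstacle'' is automatic: non-pluripolar products never charge the pluripolar set $D$, and they agree with the smooth form $\om_\f^n$ on the open set $X\setminus D$. The substantive point is the no-mass-loss identity $\int_{X\setminus D}\om_\f^n=\int_X\om^n$. You import it from Lemma~\ref{lem:poincare-finite-volume}, but that lemma's proof only establishes finiteness. A direct proof uses a cutoff $\zeta(\epsilon\mathfrak u)$ and Stokes, together with $\om,\om_\f\le C\om_{\rm mod}$. This bounds $\int\zeta(\epsilon\mathfrak u)(\om_\f^n-\om^n)$ by
$C\epsilon\,\sup|d\mathfrak u|_{\om_{\rm mod}}\,\sup|d\f|_{\om_{\rm mod}}\int_{X\setminus D}\om_{\rm mod}^n$,
which tends to $0$ because both gradients are bounded by definition.

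\textbf{Alternative route.} It needs the same identity for $\om_{\rm mod}$ in addition to the integral computation. Also, convexity of $\mathcal E^1$ only absorbs constants $\le1$. For larger $C$, enlarge $\lambda$ via Lemma~\ref{lem:modelmetric} so that $C$ times the model potential is itself $\om$-psh. This changes $\mathfrak u$ only by a bounded amount, so your lower bound on $\f$ persists.
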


	\begin{proof}
By definition, we need to show
		\[
			\int_X |\varphi| \, \omega_\f^n
			\]
			is integrable.
			 
			Near $D=\{z_1 = 0\}$, in polar coordinates $z_1 = re^{i\theta}$, this is controlled by
			\[
			2\pi \int_0^{\varepsilon}  \frac{\log(-\log r) }{r\log^2(r)} dr.
			\]
			Let $u = -\log r$, so $du = -\frac{dr}{r}$. As $r \to 0^+$, we have $u \to +\infty$, and when $r = \varepsilon$, we have $u = -\log \varepsilon$. Thus
			\[
			2\pi \int_0^{\varepsilon}  \frac{\log(-\log r) }{r\log^2(r)} dr = 2\pi \int_{-\log\varepsilon}^{+\infty} \frac{\log u}{u^2} du < +\infty,
			\]
			for $\varepsilon$ sufficiently small. 
		\end{proof}

\begin{remark}
	For simplicity and consistency with the literature, although we assume that all higher derivatives of the potential are bounded by the model Poincar\'e type metric, for the remainder of the discussion we only require the boundedness of the first four derivatives.
\end{remark}

		\begin{lemma}\label{lem:entropy-finite-poincare}
			The entropy functional 
			\[
			\ent_{\mathrm{v}}^{\log|s_D|_h^2}(\varphi) = \frac{1}{2}\Ent\left(\MA_{\mathrm{v}}(\varphi) \,\Big|\; e^{-\log|s_D|_h^2}\om^n\right)
			\]
			is finite for $\varphi \in \tmom^T$.
		\end{lemma}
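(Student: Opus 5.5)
The plan is to write the entropy as an explicit integral of a density and bound that density from above and below near $D$ using the Poincar\'e asymptotics \eqref{eq:MA-v-asymptotic}. Away from $D$ the potential is smooth, so only a neighborhood of $D$ contributes anything nontrivial.

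\textbf{Step 1: absolute continuity and the density.} For $\varphi\in\tmom^T$, the measure $\MA_{\mathrm{v}}(\varphi)$ is $\mathrm{v}(m_{\om_\f})\om_\f^n$ on $X\setminus D$. We first check that it puts no mass on $D$. This holds because the finite-energy extension (Lemma~\ref{lem:poincare-E1}) has total mass $V_{\mathrm{v}}$, and by Lemma~\ref{lem:poincare-finite-volume} the non-pluripolar part already carries the full mass. Hence $\MA_{\mathrm{v}}(\varphi)=h\,\nu$ with $\nu:=|s_D|_h^{-2}\om^n$. Writing $g:=\MA_{\mathrm{v}}(\varphi)/\om^n$, we have $h=g\,|s_D|_h^2$. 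By \eqref{eq:MA-v-asymptotic}, locally near a crossing point where $D=\{z_1\cdots z_k=0\}$,
\[
C^{-1}\prod_{j\le k}\frac{1}{\log^2|z_j|}\le h\le C\prod_{j\le k}\frac{1}{\log^2|z_j|}.
\]
This uses the SNC analogue of \eqref{eq:MA-v-asymptotic}, which follows from quasi-isometry with $\om_{\rm mod}$ together with the positivity and boundedness of $\mathrm{v}$ on $P_X$. Away from $D$, $h$ is smooth and positive.

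\textbf{Step 2: integrability of $h\log h$.} Since $h$ is bounded above, $h\log h\le C h$, and $\int h\,d\nu = V_{\mathrm{v}}<\infty$, so the positive part of the integrand is integrable. For the negative part we bound
\[
|h\log h|\le C\,h\Bigl(1+\sum_j\log\log|z_j|^{-1}\Bigr),
\]
so it suffices to show that
\[
\int_{\mathbb D^k}\prod_j\frac{1}{|z_j|^2\log^2|z_j|}\Bigl(1+\sum_j\log(-\log|z_j|)\Bigr)\,\prod_j i\,dz_j\wedge d\bar z_j<\infty .
\]
By Fubini this reduces to the one-variable integrals $\int_0^\varepsilon \frac{dr}{r\log^2 r}$ and $\int_0^\varepsilon\frac{\log(-\log r)}{r\log^2 r}\,dr$. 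Both were shown to be finite in Lemmas~\ref{lem:poincare-finite-volume} and~\ref{lem:poincare-E1}, via the substitution $u=-\log r$ and $\int^\infty \log u\,u^{-2}\,du<\infty$. Hence $\Ent(\MA_{\mathrm{v}}(\f)\mid\nu)$ is finite.

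\textbf{Expected main obstacle.} The delicate point is Step 1 at the crossings: justifying the two-sided density bound for the weighted measure on the SNC divisor, and showing that the finite-energy $\MA_{\mathrm{v}}$ charges no mass on $D$. I would handle the density bound by noting that $\om_{\rm mod}^n$ is comparable to $\prod_j (|z_j|^2\log^2|z_j|)^{-1}\om^n$. This comparability comes from the direct computation of $\ddc u_j$ in Lemma~\ref{lem:modelmetric}, since the product structure of the model makes the determinant factor. The no-mass-on-$D$ claim then follows from mass conservation as in Step 1.
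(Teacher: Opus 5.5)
Your proposal is correct and follows essentially the same route as the paper. Both arguments use the two-sided bound \eqref{eq:MA-v-asymptotic} to show that the density of $\MA_{\mathrm{v}}(\f)$ with respect to $|s_D|_h^{-2}\om^n$ is comparable to $(\log|s_D|_h)^{-2}$. Both then reduce finiteness to the integral $\int_0^\varepsilon \frac{\log(-\log r)}{r\log^2 r}\,dr$ from Lemma~\ref{lem:poincare-E1}. Your extra care goes beyond the paper without changing the argument: you write the density as a product at the SNC crossings, and you note that the finite-energy measure charges no mass on $D$ (this is in fact automatic for full-mass potentials).
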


\begin{proof}
		By definition, the entropy is given by
		\[
		\ent_{\mathrm{v}}^{f}(\varphi) = \frac{1}{2} \int_X \log\left(\frac{\MA_{\mathrm{v}}(\varphi)}{e^{-f}\omega^n}\right) \MA_{\mathrm{v}}(\varphi).
		\]
		Using the asymptotic estimate \eqref{eq:MA-v-asymptotic}, $\MA_{\mathrm{v}}(\varphi)$ is mutually bounded with $\frac{1}{|s_D|_h^2 (\log |s_D|_h)^2} \omega^n$ (in the sense that their ratio is bounded from above and below). 
		Since $e^{-f} = \frac{1}{|s_D|_h^2}$, we have the mutual bound
		\[
		C^{-1} \frac{1}{(\log |s_D|_h)^2} \leq \frac{\MA_{\mathrm{v}}(\varphi)}{e^{-f}\omega^n} \leq C \frac{1}{(\log |s_D|_h)^2}.
		\]

		Therefore, $\log\left(\frac{\MA_{\mathrm{v}}(\varphi)}{e^{-f}\omega^n}\right)$ is mutually bounded with $-2\log(-\log|s_D|_h)$. The convergence of the integral is then equivalent to the integrability of
		\[
		\int_X \frac{-\log(-\log|s_D|_h)}{|s_D|_h^2 (\log|s_D|_h)^2} \omega^n,
		\]
        which is finite by the same calculation as in Lemma \ref{lem:poincare-E1}.
		\end{proof}

	\begin{lemma}\label{lem:ricci-energy-finite-poincare}
    The weighted twisted Ricci energy $\enR_{\mathrm{v}}^{\ddc \log h}(\varphi)$ is finite for any Poincar\'e type metric $\om_\varphi$.
\end{lemma}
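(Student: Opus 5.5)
We need to show that $\enR_{\mathrm{v}}^{\ddc \log h}(\varphi)=\enE_{\mathrm{v}}^{-\Ric(\om)+\beta}(\varphi)$, with $\beta=\sum_j\theta_j$ smooth, is finite for $\varphi\in\tmom^T$. The plan is to reduce this to the finite-energy continuity estimate \eqref{equ:twenhold}. The twisting form $\theta:=-\Ric(\om)+\beta$ is a smooth closed $T$-invariant $(1,1)$-form. By the discussion in Subsection~\ref{subsec:finite-energy-extension}, the functional $\enE_{\mathrm{v}}^{\theta}$ extends continuously to $\cET(X,\om)$, and \eqref{equ:twenhold} with $\p=0$ gives
\[
\bigl|\enE_{\mathrm{v}}^{\theta}(\varphi)-\enE_{\mathrm{v}}^{\theta}(0)\bigr|\le C(\mathrm{v},\theta)\,d_1(\varphi,0).
\]
The right-hand side is finite as soon as $\varphi\in\cET(X,\om)$.

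The key step is therefore to check that $\varphi\in\cET(X,\om)$. Lemma~\ref{lem:poincare-E1} gives $\varphi\in\mathcal{E}^1(X,\om)$, and $T$-invariance holds by assumption, since $\tmom^T=\tmom\cap\cET$. Here one should check that the normalization $\om_\varphi=\om+\frac12\ddc\varphi$ in Definition~\ref{def: poincaretypemetdefn} matches the convention $\om+\ddc\varphi$ used in $\cET$. This only rescales the potential by $1/2$ and preserves membership in $\mathcal{E}^1$. Finiteness then follows immediately from the display above.

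For concreteness, and to confirm that the extended value agrees with the natural integral expression, I would also sketch a direct check. Write $\enE_{\mathrm{v}}^{\theta}(\varphi)$ as the integral over $s\in[0,1]$ of $\int_X\varphi\,\MA_{\mathrm{v}}^{\theta}(s\varphi)$. Then estimate $|\MA_{\mathrm{v}}^{\theta}(s\varphi)|\le C\,(\om_{s\varphi}^n+\om\wedge\om_{s\varphi}^{n-1})$, using that $\mathrm{v},\mathrm{v}'$ are bounded on $P_X$ and $m_\theta$ is smooth. Along the segment $s\varphi$ the metrics are quasi-isometric to combinations of $\om$ and $\om_{\rm mod}$. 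Hence the integrand is dominated by $|\varphi|/(|z_1|^2\log^2|z_1|)$, and $|\varphi|=O(\log(-\log|z_1|))$, since $\varphi=O(\mathfrak{u})$. This is exactly the integral computed in Lemma~\ref{lem:poincare-E1}, so it converges.

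The main obstacle I expect is justifying that the continuous extension coincides with this direct integral formula for the singular potential, rather than just being finite. I would handle this by approximating $\varphi$ by $\max(\varphi,-k)$-type cutoffs, which are decreasing and converge in $d_1$, and applying dominated convergence with the bound above.
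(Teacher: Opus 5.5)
Your argument is correct and is essentially the paper's proof: the twist form $-\Ric(\om)+\beta$ is smooth, so \eqref{equ:twenhold} with $\p=0$ bounds $|\enE_{\mathrm{v}}^{\theta}(\varphi)-\enE_{\mathrm{v}}^{\theta}(0)|$ by $C\,d_1(\varphi,0)$, and this is finite by Lemma~\ref{lem:poincare-E1}. Your remark on the $\frac12\ddc$ versus $\ddc$ normalization and your direct integral check are careful additions the paper does not make, but they are not needed for the finiteness claim.
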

\begin{proof}
	    The relevant twist form $\eta:=-\Ric(\omega)+\ddc\log h$ is smooth on $X$. Hence the finite-energy estimate \eqref{equ:twenhold}, applied with $\p=0$, gives
	    \[
	    |\enE_{\mathrm{v}}^\eta(\varphi)-\enE_{\mathrm{v}}^\eta(0)|
	    \leq C\,d_1(\varphi,0).
	    \]
	    The result follows from Lemma~\ref{lem:poincare-E1}.
\end{proof}

	\begin{cor}\label{cor:mabuchi-welldef-poincare}
	The weighted twisted Mabuchi energy $\cM_{\mathrm{v},\mathrm{w}}^{2\pi[D]}$ is finite on $\tmom^T$.
	\end{cor}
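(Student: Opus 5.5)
The plan is to use the decomposed formula for $\cM_{\mathrm{v},\mathrm{w}}^{2\pi[D]}$ recorded just above,
\[
\cM_{\mathrm{v},\mathrm{w}}^{2\pi[D]}(\f)=\ent_{\mathrm{v}}^{\log|s_D|_h^2}(\f)+\enR_{\mathrm{v}}^{\ddc \log h}(\f)+\enE_{\mathrm{v}\,\mathrm{w}}(\f),
\]
and to show that each of the three terms is finite for $\f\in\tmom^T$. The first step is to note that, by Lemma~\ref{lem:poincare-E1}, every $\f\in\tmom^T$ lies in $\cET(X,\om)$. Hence $d_1(\f,0)<\infty$, and all the finite-energy objects ($\MA_{\mathrm{v}}(\f)$ and the extended energies) are defined at $\f$. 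Since $\tmom^T$ is $T$-invariant by definition, the functional on $\cET(X,\om)$ from Proposition~\ref{prop:extension-mabuchi} applies directly.

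Next, we control the terms one at a time.

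\textbf{Entropy term.} Lemma~\ref{lem:entropy-finite-poincare} gives $\ent_{\mathrm{v}}^{\log|s_D|_h^2}(\f)<+\infty$. For the lower bound, \eqref{equ:entbd} gives
\[
\Ent\bigl(\MA_{\mathrm{v}}(\f)\mid e^{-f}\om^n\bigr)\ge V_{\mathrm{v}}\log\frac{V_{\mathrm{v}}}{\int_X e^{-f}\om^n}.
\]
Here I would take $f=\log|s_D|_h^2$ with coefficients strictly below $1$, as in the cusp decomposition used in Corollary~\ref{prop:extention-K-energy-divisor}. Then $\int_X e^{-f}\om^n$ is finite by Lemma~\ref{lem:poincare-finite-volume} and Lemma~\ref{lem:integrability-SNC}, so the entropy is bounded below, and therefore finite.

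\textbf{Ricci energy term.} Lemma~\ref{lem:ricci-energy-finite-poincare} gives finiteness of $\enR_{\mathrm{v}}^{\ddc\log h}(\f)$. In the cusp setting there is an extra piece $\int_X\hat f\,\MA_{\mathrm{v}}(\f)$. This is finite by \eqref{equ:energy-ddc-finite-continuity}, because $\hat f\in\cET(X,\om)$ by Lemma~\ref{lem:poincare-E1}.

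\textbf{Weighted energy term.} By \eqref{equ:wenlip} applied to the weight $\mathrm{v}\mathrm{w}$, we get $|\enE_{\mathrm{v}\mathrm{w}}(\f)|\le C\|\mathrm{v}\mathrm{w}\|_{C^0(P_X)}\,d_1(\f,0)<\infty$.

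Summing the three finite quantities gives the claim.

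The main obstacle is the entropy term, and specifically making sure that the reference measure in the entropy is the one matching the decomposition \eqref{eq: ChiProp}. With the cusp choice of $f$ (including the term $2\log(\lambda-\log|s_D|^2)$), the density of $\MA_{\mathrm{v}}(\f)$ with respect to $e^{-f}\om^n$ becomes bounded above and below, by \eqref{eq:MA-v-asymptotic}. With the naive choice $f=\log|s_D|^2$, the reference measure $e^{-f}\om^n$ has infinite mass, and the lower bound must instead come from the explicit integrand $-\log(-\log|s_D|)$ as in Lemma~\ref{lem:entropy-finite-poincare}. I would therefore check, using \eqref{eq:MA-v-asymptotic}, that $h\log h\,e^{-f}$ is integrable in either convention. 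This reduces to the integral computed in Lemma~\ref{lem:poincare-E1}.
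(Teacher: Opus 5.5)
Your proposal is correct and is essentially the paper's argument: the corollary is immediate from Lemma~\ref{lem:entropy-finite-poincare}, Lemma~\ref{lem:ricci-energy-finite-poincare}, and finiteness of $\enE_{\mathrm{v}\,\mathrm{w}}$ on $\cET(X,\om)\supset\tmom^T$ via \eqref{equ:wenlip}. Two small points. Your separate lower bound via \eqref{equ:entbd} is unnecessary, since Lemma~\ref{lem:entropy-finite-poincare} already gives absolute convergence of the entropy integral. Also, the phrase ``coefficients strictly below $1$'' is inaccurate for the cusp choice of $f$: its coefficient on $\log|s_D|_h^2$ is exactly $1$, so integrability of $e^{-f}$ comes from the $\log\log$ correction as in Lemma~\ref{lem:poincare-finite-volume}, not from Lemma~\ref{lem:integrability-SNC}.
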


\begin{lemma}\label{lem:mixed_holder_bound}
Let $\varphi \in \cET(X, \omega)$. If $g \in \cET(X, \omega)$ and $f \in C^\infty(X)$, then
    \begin{equation}\label{equ:twMA_holder1}
    \left| \int_X g \MA_{\mathrm{v}}^{\ddc f}(\varphi) \right| \leq C \dd_1(0,g)^\alpha \max{\big(\dd_1(0,f),\dd_1(0,\varphi)\big)}^{1-\alpha},
    \end{equation}
    where $\alpha=2^{-n}$ and $C > 0$ depends on $\sup_P |\mathrm{v}|$, $\sup_P |\mathrm{v}'|$, and $|\ddc f |_{\omega}$.
\end{lemma}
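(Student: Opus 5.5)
Write $\mathrm{v}_\f:=\mathrm{v}(m_{\om_\f})$. The plan is to move the $\ddc f$ off the twist onto $g$ by integration by parts, and then bound the resulting mixed gradient term by a Cauchy--Schwarz argument of the kind used in the Berman--Darvas--Lu proof of \eqref{equ:twenhold}.

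\textbf{Step 1 (smooth case, two pieces).} Let $\f,g\in\cH^T(X,\om)$; the general case follows at the end by approximation. By \eqref{equ:twMAddc} and \eqref{equ:twMA},
\[
\int_X g\,\MA_{\mathrm{v}}^{\ddc f}(\f)=\int_X g\,\mathrm{v}_\f\,n\,\ddc f\wedge\om_\f^{n-1}+\int_X g\,\langle \mathrm{v}'(m_{\om_\f}),m_f\rangle\,\om_\f^n .
\]
For the second term, note that $m_f=\dc f(\xi)$ is bounded by $C|\nabla f|_\om|\xi|$, and $|\mathrm{v}'|\le\sup_P|\mathrm{v}'|$ since $m_{\om_\f}(X)=P$. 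Hence this term is at most $C\int_X|g|\,\om_\f^n$. The standard Darvas estimate gives
\[
\int_X|g|\,\om_\f^n\le C\bigl(d_1(0,g)+d_1(0,\f)\bigr),
\]
and this should then be put into the interpolated form $d_1(0,g)^\alpha\max(\cdot)^{1-\alpha}$. This last point is delicate when $d_1(0,g)$ is small. I would handle it by applying the H\"older-type estimate \eqref{equ:MAvhold} with the pair $(g,0)$, that is, by writing $\int|g|\,\om_\f^n$ as a difference against $\int_X|g|\,\om^n$, or by using the BDL bound $\int|u|\,\om_\f^n\le C\,d_1(0,u)^{1/2}\max(\cdot)^{1/2}$. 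Since $\alpha\le 1/2$, an exponent $1/2$ on $d_1(0,g)$ is at least as good after normalizing $\max\ge d_1(0,g)$.

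\textbf{Step 2 (first term).} In the first term, $\ddc f$ is a smooth form with $-C\om\le\ddc f\le C\om$, where $C=|\ddc f|_\om$. Split $\ddc f=(\ddc f+C\om)-C\om$, a difference of positive closed forms. This reduces the problem to bounding
\[
\Bigl|\int_X g\,\mathrm{v}_\f\,\theta\wedge\om_\f^{n-1}\Bigr|
\]
for a smooth positive closed form $\theta\le C'\om$. Bound $\mathrm{v}_\f$ by $\sup_P\mathrm{v}$ and use $|g|\le -g+2\sup g$, with $\sup g$ controlled by $d_1(0,g)$. It then remains to estimate $\int|g|\,\om\wedge\om_\f^{n-1}$. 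For this I would repeat the BDL/Darvas iterative estimate behind \eqref{equ:twenhold}: replace $\om_\f$ by $\om$ one factor at a time, using integration by parts and Cauchy--Schwarz on terms of the form $\int d g\wedge\dc\f\wedge\om^{j}\wedge\om_\f^{n-1-j}$. Each of the $n$ steps halves the exponent of $d_1(0,g)$, which produces $\alpha=2^{-n}$.

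\textbf{Step 3 (the main obstacle: keeping the weight).} The weight $\mathrm{v}_\f$ sits inside these integrations by parts. Differentiating it produces terms $\langle \mathrm{v}'(m_{\om_\f}),dm_{\om_\f}\rangle$, and these are not bounded for finite-energy $\f$. I would try to avoid this by bounding the integrand pointwise first, using positivity, before any integration by parts. Concretely, $|g\,\mathrm{v}_\f\,\theta\wedge\om_\f^{n-1}|\le(\sup_P\mathrm{v})\,|g|\,C'\om\wedge\om_\f^{n-1}$ holds as measures, so only unweighted mixed Monge--Amp\`ere integrals remain, and the iterative estimate in Step 2 never meets the weight.

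\textbf{Step 4 (extension).} Finally, pass to general $\f,g\in\cET(X,\om)$ by decreasing smooth $T$-invariant approximations. Both sides are continuous along such sequences: the left side by the continuity of mixed Monge--Amp\`ere measures, the right side by $d_1$-continuity.
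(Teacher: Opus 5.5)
Your proposal follows essentially the paper's argument. You split $\MA_{\mathrm{v}}^{\ddc f}(\varphi)$ via \eqref{equ:twMA} into the twist term and the moment-map term, bound $\mathrm{v}$ and $\mathrm{v}'$ by their sup norms on $P$, and invoke the unweighted BBGZ/BDL/DDL mixed-energy H\"older estimates; your Step~3 (bounding the weight pointwise against positive measures after splitting $\ddc f$ and passing to $|g|$) is exactly the justification the paper leaves implicit when it pulls $\sup_P|\mathrm{v}|$ out of a signed integral.

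One caveat, shared with the paper's citation: passing to $|g|$ destroys the invariance of the left-hand side under $g\mapsto g+c$ (recall $\int_X\MA_{\mathrm{v}}^{\ddc f}(\varphi)=0$). For instance, with $\varphi=0$ and $g\equiv -c$, your intermediate quantity is $c\int_X\omega^n$, which cannot be bounded by $C\,c^{\alpha}d_1(0,f)^{1-\alpha}$. What you actually prove therefore has $d_1(0,g)$ inside the max, as in \eqref{equ:MAvhold}, so ``normalizing $\max\ge d_1(0,g)$'' modifies the statement rather than normalizing it. Relatedly, $(\sup g)_+$ is not linearly controlled by $d_1(0,g)$ when $d_1(0,g)$ is small, so the $2\sup g$ device in Step~2 loses too much at small scales.
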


\begin{proof}
    By Equation \eqref{equ:twMA}, we have:
    \begin{equation}\label{eq:ibp_proof}
    \int_X g \MA_{\mathrm{v}}^{\ddc f}(\varphi) = n \int_X \mathrm{v}(m_{\omega_\varphi}) g (\omega-\omega_f) \wedge \omega_\varphi^{n-1} + \int_X g \langle \mathrm{v}'(m_{\omega_\varphi}), m_{f}\rangle \omega_\varphi^n.
    \end{equation}
     For the first term, applying the mixed energy estimates from \cite{BBGZ,DDL}, we obtain:
    \[
    \left| \int_X \mathrm{v}(m_{\omega_\varphi}) g (\omega-\omega_f) \wedge \omega_\varphi^{n-1} \right| \leq (\sup_P |\mathrm{v}|) \, C \, \dd_1(0,g)^\alpha \max{\big(\dd_1(0,f),\dd_1(0,\varphi)\big)}^{1-\alpha}.
    \]
     Similarly the second term we have,
	 \[
	 \left| \int_X g \langle \mathrm{v}'(m_{\omega_\varphi}), m_{f}\rangle \omega_\varphi^n \right| \leq \, C \, \dd_1(0,g)^\alpha \dd_1(0,\varphi)^{1-\alpha}.
	 \]    
	 where $C$ depends on $\sup_P |\mathrm{v}'|$ and $|\ddc f|_\omega$. 
    Combining these estimates yields the result.
\end{proof}

\begin{lemma}\label{lem:symmetry_E1_general}
Let $\varphi \in \cET(X, \omega)$. If $f$ and $g$ are both in $\cET(X, \omega)$ and at least one is smooth, then
    \begin{equation}\label{equ:twMA_symmetry_final}
 \int_X g \MA_{\mathrm{v}}^{\ddc f}(\varphi)  = \int_X f \MA_{\mathrm{v}}^{\ddc g}(\varphi).
    \end{equation}
\end{lemma}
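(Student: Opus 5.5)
Our plan is to establish the symmetry first for smooth data and then extend it by approximation, using the continuity estimates already available. By symmetry of the roles, assume $f$ is smooth and $g\in\cET(X,\om)$ is arbitrary. In the fully smooth case ($f,g,\f$ smooth and $T$-invariant) the identity is \eqref{equ:twMAsymm}: by \eqref{equ:twMAddc} both sides equal $-n\int_X \mathrm{v}(m_{\om_\f})\,\de g\wedge\dc f\wedge\om_\f^{n-1}$. This expression is symmetric in $f,g$ because $\de g\wedge\dc f\wedge\om_\f^{n-1}=\de f\wedge\dc g\wedge\om_\f^{n-1}$ for real functions.

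Next we remove smoothness of $\f$ while keeping $f,g$ smooth. We take $\f_j\in\cH^T(X,\om)$ decreasing to $\f$; such a sequence exists by $T$-averaging a Demailly regularization, and it satisfies $d_1(\f_j,\f)\to0$. The left side $\int_X g\,\MA_{\mathrm{v}}^{\ddc f}(\f_j)$ converges by Lemma~\ref{lem:mixed_holder_bound}. More precisely, write $\MA_{\mathrm{v}}^{\ddc f}(\f)$ as $n\mathrm{v}(m_{\om_\f})\,\ddc f\wedge\om_\f^{n-1}+\langle\mathrm{v}'(m_{\om_\f}),m_f\rangle\om_\f^n$. With $f$ smooth, each term is a mixed Monge--Amp\`ere measure against a smooth form, multiplied by a bounded weight. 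These measures are continuous in $\f$ along $d_1$-convergent sequences, when tested against finite-energy functions, by the same mixed-energy estimates from \cite{BBGZ,DDL} used in that lemma and in the extension of $\MA_{\mathrm{v}}$ quoted in \cite{BJT}. The same argument handles the right side with $f$ and $g$ interchanged.

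Finally we let $g\in\cET(X,\om)$ be general and approximate it by smooth $T$-invariant $g_k\searrow g$ with $d_1(g_k,g)\to0$. On the left, linearity in $g$ together with \eqref{equ:twMA_holder1} applied to $g_k-g$ gives convergence. A slight care is needed here, since $g_k-g$ is not a potential; we would either split it as a difference of finite-energy potentials or use monotone convergence, which applies because $\MA_{\mathrm{v}}^{\ddc f}$ is a signed measure of bounded total variation that integrates finite-energy functions. On the right, the right side is $\int_X f\,\MA_{\mathrm{v}}^{\ddc g_k}(\f)$. We rewrite it through the integration-by-parts form $-n\int \mathrm{v}\,\de f\wedge\dc g_k\wedge\om_\f^{n-1}$, or alternatively express $\ddc g_k=\om_{g_k}-\om$ and expand. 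Either way it becomes a combination of terms of the type $\int f\,\mathrm{v}(m_{\om_\f})\,\om_{g_k}\wedge\om_\f^{n-1}$ and $\int f\langle\mathrm{v}'(m_{\om_\f}),m_{g_k}\rangle\om_\f^n$.

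The main obstacle is this last limit. Convergence of the first term follows from continuity of mixed Monge--Amp\`ere measures along decreasing sequences, since $f$ is smooth. The moment term $m_{g_k}=\dc g_k(\xi)$, however, involves derivatives of $g_k$, and it is not obviously convergent for finite-energy $g$. To handle it, we would integrate by parts in the $\xi$-direction, writing $m_{g_k}^\xi\,\om_\f^n$ as $-\de g_k(J\xi)\om_\f^n$ and moving the derivative onto $f\,\mathrm{v}'$ and $\om_\f^n$ via the $T$-invariance and the Cartan formula for $J\xi$. Another option is to use the weighted equivariant identity $\MA_{\mathrm{v}}^{\ddc g}(\f)=\frac{d}{ds}\big|_{s=0}\MA_{\mathrm{v}}(\f+sg)$ in the sense of \cite{BJT}, which expresses the right-hand side through the derivative of $s\mapsto\int f\,\MA_{\mathrm{v}}(\f+sg)$; this function is a polynomial-type function continuous in $g$ by \eqref{equ:MAvhold}. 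Either route would give convergence, and passing to the limit in the smooth identity then concludes the proof.
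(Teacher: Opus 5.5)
Your Steps 1 and 2 are fine in outline. The argument breaks down at the step you flag as the main obstacle: you never establish convergence of $\int_X f\,\MA_{\mathrm{v}}^{\ddc g_k}(\varphi)$ at a singular $\varphi$, and neither proposed route works as written.

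\emph{Route (a).} Moving $J\xi$ off $g_k$ produces $\mathcal L_{J\xi}\bigl(f\,\mathrm{v}'(m_{\omega_\varphi})\,\omega_\varphi^n\bigr)$. This contains $J\xi(m_{\omega_\varphi})\,\omega_\varphi^n$ and $n\,\ddc m^\xi_{\omega_\varphi}\wedge\omega_\varphi^{n-1}$, since $\mathcal L_{J\xi}\omega_\varphi=d\iota_{J\xi}\omega_\varphi=-\ddc m^\xi_{\omega_\varphi}$. These are second and third derivatives of $\varphi$ tested against its own Monge--Amp\`ere measures. They have no meaning for $\varphi\in\cET(X,\omega)$, where $m_{\omega_\varphi}$ is merely bounded and defined almost everywhere.

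\emph{Route (b).} First, $\varphi+sg$ is in general not $\omega$-psh for $s\neq0$, so $\MA_{\mathrm{v}}(\varphi+sg)$ is undefined. Second, along an admissible path such as $(1-s)\varphi+sg$, the function $s\mapsto\int_X f\,\MA_{\mathrm{v}}$ is a polynomial only when $\mathrm{v}$ is. Third, \eqref{equ:MAvhold} gives continuity in $g$ of the values at fixed $s$, and this controls the derivative at $s=0$ only through that polynomial structure. For general $\mathrm{v}$ you would need a polynomial approximation of $\mathrm{v}$ with error bounds uniform in $k$ for the singular twists $\ddc g_k$, which is exactly what is missing.

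The missing observation is that you do not need any direct analysis of the moment term $\langle\mathrm{v}'(m_{\omega_\varphi}),m_g\rangle\,\omega_\varphi^n$. For singular $\varphi$ and $g$ this term has no a priori pointwise meaning anyway: $m_g$ and $m_{\omega_\varphi}$ are defined only Lebesgue-a.e., while $\omega_\varphi^n$ can charge Lebesgue-null sets. So the right-hand side has to be understood as a limit over approximants.

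Your own Step 2 gives $\int_X f\,\MA_{\mathrm{v}}^{\ddc g_k}(\varphi)=\int_X g_k\,\MA_{\mathrm{v}}^{\ddc f}(\varphi)$. The right side converges to $\int_X g\,\MA_{\mathrm{v}}^{\ddc f}(\varphi)$ by your monotone-convergence argument. Hence the limit on the left exists, is independent of the approximating sequence, and gives the identity.

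This is essentially the paper's proof run in the opposite order. The paper keeps $g\in\cET(X,\omega)$ fixed and uses the identity at smooth $\varphi_k$. There $\omega_{\varphi_k}^n$ is smooth and $m_g=\dc g(\xi)\in L^1$, so the identity is only distributional integration by parts. It then lets $\varphi_k\searrow\varphi$ on the side carrying the smooth twist $\ddc f$, and takes $\int_X f\,\MA_{\mathrm{v}}^{\ddc g}(\varphi)$ to be, by definition, the limit of $\int_X f\,\MA_{\mathrm{v}}^{\ddc g}(\varphi_k)$. Both orderings produce the same value, pinned down by the left-hand side; with this reading, the obstacle in your Step 3 disappears.
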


\begin{proof}
   Assume $f$ is smooth. When $\varphi$ is smooth, the result follows from the smooth symmetry identity \eqref{equ:twMAsymm}. 

    Otherwise, we take a sequence of smooth functions $\varphi_k$ decreasing to $\varphi$. We apply the mixed energy estimate \eqref{equ:twMA_holder1} from Lemma~\ref{lem:mixed_holder_bound} to the functional $\psi \mapsto \int g \MA_{\mathrm{v}}^{\ddc f}(\psi)$. Since $\varphi_k \to \varphi$ in the $d_1$-topology, the estimate implies:
    \[
    \lim_{k \to \infty} \int_X g \MA_{\mathrm{v}}^{\ddc f}(\varphi_k) = \int_X g \MA_{\mathrm{v}}^{\ddc f}(\varphi).
    \]
	By the definition, $\int_X f \MA_{\mathrm{v}}^{\ddc g}(\varphi)$ is the limit of $\int_X f \MA_{\mathrm{v}}^{\ddc g}(\varphi_k)$ as $k \to \infty$ which concludes the proof.
\end{proof}

As a consequence, we obtain:

\begin{lemma}
	For any $\f \in \cET(X, \om)$, the differential of $\enE_{\mathrm{v}}$ is given by $\MA_{\mathrm{v}}(\f)$, and for any $T$-invariant $(1,1)$-form $\chi$, the differential of $\enE_{\mathrm{v}}^{\chi}$ is given by $\MA_{\mathrm{v}}^{\chi}(\f)$. That is,
	\[
	\langle (\enE_{\mathrm{v}})'(\f), \psi \rangle = \int_X \psi \cdot \MA_{\mathrm{v}}(\f), \quad \langle (\enE_{\mathrm{v}}^{\chi})'(\f), \psi \rangle = \int_X \psi \cdot \MA_{\mathrm{v}}^{\chi}(\f)
	\]
	for any $\psi \in  \cET(X, \om)$. 

	In particular, for $\chi$ smooth and $\f, \psi \in \tmom$, these variations are well-defined and finite.
\end{lemma}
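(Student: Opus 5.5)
The plan is to obtain the differentiability statements from the first-variation identities on smooth potentials (\eqref{equ:Ev-first-var} and \eqref{equ:Evchi-first-var}) by approximation. The tools are the finite-energy continuity estimates \eqref{equ:MAvhold}, \eqref{equ:twenhold}, Lemma~\ref{lem:mixed_holder_bound} and the symmetry Lemma~\ref{lem:symmetry_E1_general}.

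\textbf{The operator $\enE_{\mathrm{v}}$.} Fix $\f,\psi\in\cET(X,\om)$ and consider the segment $\f_s:=(1-s)\f+s(\f+\psi)$. Convexity of $\cET$ makes sense here once we note that $\f+s\psi$ need not be $\om$-psh. I will therefore interpret the differential as the derivative along $s\mapsto (1-s)\f+s\psi'$, with $\psi'\in\cET$ and direction $\psi'-\f$; for $\psi$ only a difference of finite-energy potentials the formula extends by linearity.

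For smooth $\f,\psi'$ the smooth identity integrates to
\[
\enE_{\mathrm{v}}(\f_1)-\enE_{\mathrm{v}}(\f_0)=\int_0^1\!\int_X(\psi'-\f)\,\MA_{\mathrm{v}}(\f_s)\,ds .
\]
I approximate general $\f,\psi'$ by smooth $T$-invariant decreasing sequences, which converge in $d_1$. The left side converges by \eqref{equ:wenlip}. The integrand on the right converges for each $s$ by \eqref{equ:MAvhold}, with a uniform bound coming from the same estimate, so dominated convergence in $s$ applies. Hence the integral formula holds on $\cET$. Since $s\mapsto\int(\psi'-\f)\MA_{\mathrm{v}}(\f_s)$ is continuous by \eqref{equ:MAvhold}, differentiating at $s=0$ gives $\int_X\psi\,\MA_{\mathrm{v}}(\f)$.

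\textbf{The operator $\enE^\chi_{\mathrm{v}}$ with $\chi$ smooth.} The argument is identical, now using \eqref{equ:twenhold} for convergence of the energies. For convergence of $\int g\,\MA_{\mathrm{v}}^\chi(\f_s)$ I write $\chi=\theta+\ddc u$ with $\theta$ a fixed smooth form. The exact part is handled by Lemma~\ref{lem:mixed_holder_bound}. The part involving $\theta$ is handled by the mixed energy estimates used in that lemma's proof: BBGZ/DDL-type Hölder continuity of $\int g\,\theta\wedge\om_{\f}^{n-1}$ in $d_1$, together with weak continuity of $\langle\mathrm{v}'(m_{\om_\f}),m_\theta\rangle\om_\f^n$ under $d_1$ convergence as in \cite{BJT}.

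\textbf{The Poincaré-type case.} For $\f,\psi\in\tmom$, Lemma~\ref{lem:poincare-E1} gives $\tmom\subset\cE^1$. Both pairings $\int\psi\,\MA_{\mathrm{v}}(\f)$ and $\int\psi\,\MA_{\mathrm{v}}^\chi(\f)$ are then finite by the estimates above, which proves the last assertion.

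\textbf{Main obstacle.} I expect the hard step to be the uniform-in-$s$ convergence of $\int g\,\MA_{\mathrm{v}}^\chi(\f_s^k)$ when $\chi$ has a non-exact smooth part. The term $\langle\mathrm{v}'(m_{\om_\f}),m_\chi\rangle$ involves the moment map of $\om_\f$, which is only defined almost everywhere for finite-energy $\f$. I would first try to reduce to bounded potentials via the canonical cutoffs $\max(\f,-j)$ and then use Bedford--Taylor continuity, controlling the tails with \eqref{equ:MAvhold}.
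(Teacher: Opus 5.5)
Your proposal is correct in outline, but it argues differently from the paper. The paper's proof is a one-line appeal. It combines the smooth identities \eqref{equ:Ev-first-var} and \eqref{equ:Evchi-first-var} with the symmetry Lemma~\ref{lem:symmetry_E1_general}, which in effect transports the smooth formula to finite-energy $\f$ through the decreasing approximation built into that lemma. It then refers finiteness on $\tmom$ to the computation in Lemma~\ref{lem:poincare-finite-volume}.

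You instead integrate the first-variation identity along affine segments, pass to the limit in the integrated form, and recover the derivative from continuity in $s$. This buys two things the paper leaves implicit:
\begin{itemize}
\item a precise meaning of the differential: one-sided derivatives along $(1-s)\f+s\psi'$, extended linearly, since $\f+t\psi$ generally leaves $\cET(X,\om)$;
\item an honest justification for exchanging limit and derivative, which convergence of the energies and of the smooth derivatives alone does not give.
\end{itemize}
Your finiteness argument through Lemma~\ref{lem:poincare-E1} is also the relevant one, since finite volume alone does not make $\psi$ integrable.

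The price is that you need $d_1$-continuity of $(g,\f)\mapsto\int_X g\,\MA_{\mathrm{v}}^\chi(\f)$ jointly, and two of your justifications should be corrected.
\begin{itemize}
\item \eqref{equ:MAvhold} only moves the measure against a fixed test function. Convergence for the varying $\psi'_k-\f_k$ therefore needs an extra step, for example monotonicity of your approximants combined with \eqref{equ:MAvhold} applied to the fixed differences $\psi'_j-\psi'$, then letting $j\to\infty$.
\item Lemma~\ref{lem:mixed_holder_bound} is a size bound, not a continuity estimate in $\f$. For smooth $\chi$ you should instead invoke the finite-energy continuity of the twisted weighted operators from \cite{BJT}, which is the same input the paper tacitly relies on.
\end{itemize}
With that citation, the cut-off argument in your ``main obstacle'' is unnecessary. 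The splitting $\chi=\theta+\ddc u$ is also superfluous for smooth $\chi$.
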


\begin{proof}
 The variation formulas follow from the smooth identities \eqref{equ:Ev-first-var} and \eqref{equ:Evchi-first-var}, together with Lemma~\ref{lem:symmetry_E1_general}. The finiteness for $\tmom$ is established by the computations in the proof of Lemma~\ref{lem:poincare-finite-volume}.
\end{proof}

We turn to consider the variation of the entropy term $\ent_{\mathrm{v}}^{f}(\f)$ on the space of $T$-invariant Poincar\'e type potentials $\tmom^T$.

\begin{lemma}\label{lem:variation-entropy-poincare}
    The variation of the entropy term $\ent_{\mathrm{v}}^{\log|s_D|_h^2}(\f)$ on the space of $T$-invariant Poincar\'e type potentials $\tmom^T$ is well-defined and finite, and satisfies
    \[
    \langle (\ent_{\mathrm{v}}^{\log|s_D|_h^2})'(\f), \psi \rangle = \int_X \psi \left(-\MA_{\mathrm{v}}^{\Ric_{\mathrm{v}}(\om_\f)}(\f) + \MA_{\mathrm{v}}^{\Ric(\om)}(\f) + \MA_{\mathrm{v}}^{\frac{1}{2}\ddc(\log|s_D|_h^2)}(\f)\right)
    \]
    for any $\psi \in \tmom^T$. The variation is finite.
\end{lemma}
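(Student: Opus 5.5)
We compute the variation of the entropy directly on $\tmom^T$, writing it as a smooth-looking expression and then using the Poincar\'e-type bounds to justify differentiating under the integral sign. Set $f=\log|s_D|_h^2$ and $\varphi_s:=\f+s\psi$ with $\f,\psi\in\tmom^T$. Since $\tmom^T$ is convex in the relevant sense, $\varphi_s$ is again a Poincar\'e-type potential for $|s|$ small: quasi-isometry to $\om_{\rm mod}$ is preserved because $\ddc\psi$ is bounded with respect to $\om_{\rm mod}$. Write $\MA_{\mathrm{v}}(\varphi_s)=e^{G_s}\,e^{-f}\om^n$, where
\[
G_s:=\log\frac{\mathrm{v}(m_{\om_{\varphi_s}})\om_{\varphi_s}^n}{\om^n}+f .
\]
Then
\[
\ent_{\mathrm{v}}^{f}(\varphi_s)=\tfrac12\int_X G_s\,\MA_{\mathrm{v}}(\varphi_s).
\]
By \eqref{eq:MA-v-asymptotic}, $G_s=-2\log(-\log|s_D|_h)+O(1)$ uniformly in $s$, and by Lemma~\ref{lem:entropy-finite-poincare} it is integrable against $\MA_{\mathrm{v}}(\varphi_s)$.

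Differentiating formally at $s=0$ gives two terms. The first is $\tfrac12\int_X \dot G\,\MA_{\mathrm{v}}(\f)$, with $\dot G=\tr_{\om_\f,\mathrm{v}}(\ddc\psi)$, so this term equals $\tfrac12\int_X\MA_{\mathrm{v}}^{\ddc\psi}(\f)$. It vanishes by \eqref{equ:twMAddc} with $g=1$, provided the integration by parts has no boundary contribution. The second is $\tfrac12\int_X G_0\,\MA_{\mathrm{v}}^{\ddc\psi}(\f)$. Using the symmetry \eqref{equ:twMAsymm}, I would move $\ddc$ onto $G_0$, which gives $\tfrac12\int_X\psi\,\MA_{\mathrm{v}}^{\ddc G_0}(\f)$. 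Then I would identify
\[
\tfrac12\ddc G_0=-\Ric_{\mathrm{v}}(\om_\f)+\Ric(\om)+\tfrac12\ddc f,
\]
using \eqref{equ:Ric-v}, the identity $\Ric(\om)=-\frac12\ddc\log\om^n$, and the fact that the term $\log\mathrm{v}(m_\om)$ does not appear in $\Ric(\om)$. I expect the smooth weight correction $\log \mathrm{v}$ to be accounted for by $\Ric_{\mathrm{v}}$, matching the claimed formula. Linearity of $\chi\mapsto\MA_{\mathrm{v}}^{\chi}$ then yields the stated expression.

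To make this rigorous, I would work on $X_\varepsilon:=\{|s_D|_h\ge\varepsilon\}$ and let $\varepsilon\to0$. On $X_\varepsilon$ everything is smooth, so the differentiation and the Stokes integrations by parts are legitimate. Differentiation under the integral is justified by dominated convergence: the $s$-derivatives of the integrands are bounded by $C(1+|\log(-\log|s_D|_h)|)$ times $\MA_{\mathrm{v}}(\f)$, using bounded $\om_{\rm mod}$-derivatives of $\psi$ up to order four. That bound is integrable by the computation in Lemma~\ref{lem:poincare-E1}.

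\textbf{Main obstacle.} The hardest step is controlling the boundary terms on $\partial X_\varepsilon$ coming from the two integrations by parts, which have the form
\[
\int_{\partial X_\varepsilon}\mathrm{v}\,G_0\,\dc\psi\wedge\om_\f^{n-1}
\quad\text{and}\quad
\int_{\partial X_\varepsilon}\mathrm{v}\,\psi\,\dc G_0\wedge\om_\f^{n-1}.
\]
I would estimate them in local coordinates $z_1=re^{i\theta}$. Since $|\nabla_{\om_{\rm mod}}\psi|$ is bounded, $|\dc\psi|$ restricted to the circle is $O(1/|\log r|)$. The function $\psi$ grows like $O(\log|\log r|)$, and $\dc G_0$ is $O(1/|\log r|)$ on the circle by the derivative bounds and the explicit form $-2\log(-\log r)$. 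The restriction of $\om_\f^{n-1}$ to the hypersurface $\{r=\varepsilon\}$ is bounded because the tangential part of $\om_{\rm mod}$ is bounded apart from the $d\theta$ direction, which carries a factor $1/|\log r|$. Each boundary term is therefore $O\big(\log|\log\varepsilon|/|\log\varepsilon|\big)\to0$.

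Finally, finiteness of the variation follows because $\Ric_{\mathrm{v}}(\om_\f)-\Ric(\om)-\tfrac12\ddc f$ is bounded with respect to $\om_{\rm mod}$ away from $[D]$, by Corollary~\ref{decomp} and \eqref{eq:Ricci-asymptotic}. Its weighted trace is thus $O(1)$, and it integrates $\psi=O(\mathfrak u)$ against $\MA_{\mathrm{v}}(\f)$ by Lemma~\ref{lem:poincare-E1}.
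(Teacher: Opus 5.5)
Your proposal is correct and follows the same route as the paper: you split the derivative into $\tfrac12\int_X \MA_{\mathrm{v}}^{\ddc\psi}(\f)$, which vanishes by mass conservation, and $\tfrac12\int_X G_0\,\MA_{\mathrm{v}}^{\ddc\psi}(\f)$, move $\ddc$ onto $G_0=\log\bigl(\MA_{\mathrm{v}}(\f)/e^{-f}\om^n\bigr)$ by symmetry, identify $\tfrac12\ddc G_0=-\Ric_{\mathrm{v}}(\om_\f)+\Ric(\om)+\tfrac12\ddc f$, and get finiteness from Corollary~\ref{decomp} and Lemma~\ref{lem:poincare-E1}. The difference is in the justification: you use dominated convergence for the differentiation, and you handle the symmetry step by cutting off at $\{|s_D|_h\ge\varepsilon\}$ and showing the boundary terms are $O(\log|\log\varepsilon|/|\log\varepsilon|)$; this is more careful than the paper's appeal to Lemma~\ref{lem:symmetry_E1_general}, whose requirement that one of the two functions be smooth is not literally met by $G_0\sim-2\log(-\log|s_D|_h)$.
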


\begin{proof}
    Let $f = \log|s_D|_h^2$. We consider the expansion of $\ent_{\mathrm{v}}^{f}(\f+t\psi)$ around $t=0$. Using the expansion
    \[
    \MA_{\mathrm{v}}(\f+t\psi) = \MA_{\mathrm{v}}(\f) + t \MA_{\mathrm{v}}^{\ddc \psi}(\f) + O(t^2) = \MA_{\mathrm{v}}(\f) \left( 1 + t \frac{\MA_{\mathrm{v}}^{\ddc \psi}(\f)}{\MA_{\mathrm{v}}(\f)} + O(t^2) \right),
    \]
    understood in the weak sense of measures, we have
    \[
    \log \left(\frac{\MA_{\mathrm{v}}(\f+t\psi)}{e^{-f}\om^n}\right) = \log \left(\frac{\MA_{\mathrm{v}}(\f)}{e^{-f}\om^n}\right) + t \frac{\MA_{\mathrm{v}}^{\ddc \psi}(\f)}{\MA_{\mathrm{v}}(\f)} + O(t^2).
    \]
    Substituting this into the entropy functional yields:
    \begin{equation*}
    \begin{split}
        \frac{d}{dt}\Big|_{t=0} \ent_{\mathrm{v}}^{f}(\f+t\psi) &= \frac{1}{2} \int_X \left( \frac{\MA_{\mathrm{v}}^{\ddc \psi}(\f)}{\MA_{\mathrm{v}}(\f)} \right) \MA_{\mathrm{v}}(\f) + \frac{1}{2} \int_X \log \left(\frac{\MA_{\mathrm{v}}(\f)}{e^{-f}\om^n}\right) \MA_{\mathrm{v}}^{\ddc \psi}(\f) \\
        &= \frac{1}{2} \int_X \MA_{\mathrm{v}}^{\ddc \psi}(\f) + \frac{1}{2} \int_X \psi \MA_{\mathrm{v}}^{\ddc \log \left(\frac{\MA_{\mathrm{v}}(\f)}{e^{-f}\om^n}\right) }(\f),
    \end{split}
    \end{equation*}
    In the second equality, the first term vanishes by the mass conservation of the weighted Laplacian. For the second term, we applied Lemma \ref{lem:symmetry_E1_general}, which is justified since $\log \left(\frac{\MA_{\mathrm{v}}(\f)}{e^{-f}\om^n}\right)$ is smooth and $\psi \in \cET$ (see Lemma \ref{lem:poincare-E1}).
    
    Using the definitions of weighted Ricci curvature $\Ric_{\mathrm{v}}(\om_\f) = -\frac{1}{2}\ddc \log (\mathrm{v}(m_{\om_\f})\om_\f^n)$ and $\Ric(\om) = -\frac{1}{2}\ddc \log \om^n$, we compute:
    \begin{equation*}
    \begin{split}
        \frac{1}{2}\ddc \log \left(\frac{\MA_{\mathrm{v}}(\f)}{e^{-f}\om^n}\right) &= \frac{1}{2}\ddc \log (\mathrm{v}(m_{\om_\f})\om_\f^n) - \frac{1}{2}\ddc \log \om^n + \frac{1}{2}\ddc f \\
        &= -\Ric_{\mathrm{v}}(\om_\f) + \Ric(\om) + \frac{1}{2}\ddc \log|s_D|_h^2.
    \end{split}
    \end{equation*}
    Thus, the variation formula becomes
    \[
    \langle (\ent_{\mathrm{v}}^{f})'(\f), \psi \rangle = \int_X \psi \left(-\MA_{\mathrm{v}}^{\Ric_{\mathrm{v}}(\om_\f)}(\f) + \MA_{\mathrm{v}}^{\Ric(\om)}(\f) + \MA_{\mathrm{v}}^{\frac{1}{2}\ddc \log|s_D|_h^2}(\f)\right).
    \]
    
    It remains to show that the variation is finite. Using the decomposition from Corollary \ref{decomp}, we have
    \[
    \Ric(\omega_\varphi) = \theta_\varphi + \theta'_\varphi + 2\pi[D],
    \]
    where $\theta_\varphi$ is smooth and $\theta'_\varphi$ is controlled by $\om_{\rm mod}$. 
    Combined with the Lelong-Poincar\'e equation, it follows that the density of the variation
    \[
    -\Ric_{\mathrm{v}}(\omega_\varphi) + \Ric(\omega) + \ddc \log|s_D|_h
    \]
    is controlled by $\om_{\rm mod}$. Since $\psi$ is of Poincar\'e-type, by the calculation in the proof of Lemma \ref{lem:poincare-E1}, the integral is finite.
\end{proof}

\begin{remark}
By Proposition~\ref{prop:extension-mabuchi}, the approximation procedure used in this section yields the same greatest lower semicontinuous extension of the relevant weighted twisted Mabuchi functional.
\end{remark}

As a corollary, we obtain the first variation formula for Poincar\'e type potentials.

\begin{prop}[First variation for Poincar\'e type potentials]\label{prop:first-var-weak-poincare}
Let $\f \in \tmom^T$ be a Poincar\'e type potential and $\psi \in  \tmom^T$. Then the first variation formula
\[
\langle (\cM_{\mathrm{v},\mathrm{w}}^{2\pi[D]})'(\f),\psi \rangle = \int_X \psi \cdot (\mathrm{w} - S_{\mathrm{v}}^{2\pi[D]}(\omega_\f)) \, \MA_{\mathrm{v}}(\f)
\]
holds.
\end{prop}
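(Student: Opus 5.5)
The plan is to differentiate the decomposed form of $\cM_{\mathrm{v},\mathrm{w}}^{2\pi[D]}$ term by term on $\tmom^T$ and add the pieces. On $\tmom^T$ the functional equals
\[
\cM_{\mathrm{v},\mathrm{w}}^{2\pi[D]}(\f)=\ent_{\mathrm{v}}^{\log|s_D|_h^2}(\f)+\enE_{\mathrm{v}}^{-\Ric(\om)+\beta}(\f)+\enE_{\mathrm{v}\,\mathrm{w}}(\f),\qquad \beta=\sum_j\theta_j .
\]
Each term is finite there by Lemma~\ref{lem:entropy-finite-poincare}, Lemma~\ref{lem:ricci-energy-finite-poincare} and Lemma~\ref{lem:poincare-E1}. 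The Remark preceding the statement identifies this expression with the greatest lsc extension, so it suffices to compute the derivative at $t=0$ of each term along $\f+t\psi$. For small $|t|$ this path stays in $\tmom^T$ (or at least in $\cET$), because $\om_\f$ is quasi-isometric to $\om_{\rm mod}$ and $\ddc\psi$ is bounded by $\om_{\rm mod}$.

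\textbf{Step 1 (energy terms).} I will apply the lemma on differentials of $\enE_{\mathrm{v}}$ and $\enE_{\mathrm{v}}^\chi$ on $\cET$. With the smooth form $-\Ric(\om)+\beta$ it gives $\int_X\psi\,\MA_{\mathrm{v}}^{-\Ric(\om)+\beta}(\f)$. With the weight $\mathrm{v}\mathrm{w}$ it gives $\int_X\psi\,\mathrm{w}(m_{\om_\f})\MA_{\mathrm{v}}(\f)$, since $\MA_{\mathrm{v}\mathrm{w}}=\mathrm{w}(m)\MA_{\mathrm{v}}$ by \eqref{equ:MAv}.

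\textbf{Step 2 (entropy term).} Lemma~\ref{lem:variation-entropy-poincare} gives
\[
\int_X\psi\Bigl(-\MA_{\mathrm{v}}^{\Ric_{\mathrm{v}}(\om_\f)}(\f)+\MA_{\mathrm{v}}^{\Ric(\om)}(\f)+\MA_{\mathrm{v}}^{\frac12\ddc\log|s_D|_h^2}(\f)\Bigr).
\]

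\textbf{Step 3 (summing).} Adding the three contributions and using that $\chi\mapsto\MA_{\mathrm{v}}^\chi(\f)$ is linear (by \eqref{equ:twMA}, since $m_\chi$ is linear in $\chi$), the $\Ric(\om)$ terms cancel. What remains is
\[
\int_X\psi\Bigl(\mathrm{w}\,\MA_{\mathrm{v}}(\f)-\MA_{\mathrm{v}}^{\Ric_{\mathrm{v}}(\om_\f)-\beta-\frac12\ddc\log|s_D|_h^2}(\f)\Bigr).
\]
By Lelong--Poincar\'e, $\beta+\frac12\ddc\log|s_D|_h^2=2\pi[D]$, so the second measure is $S_{\mathrm{v}}^{2\pi[D]}(\om_\f)\MA_{\mathrm{v}}(\f)$ by \eqref{equ:Svchi-measure}. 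This gives the claimed formula. Finiteness of the final integral follows from \eqref{eq:scalar-curvature-bounded} together with $\psi\in L^1(\MA_{\mathrm{v}}(\f))$, which is the computation in Lemma~\ref{lem:poincare-E1}.

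\textbf{Main obstacle.} The delicate point is the interpretation of $\MA_{\mathrm{v}}^{\Ric_{\mathrm{v}}(\om_\f)-2\pi[D]}(\f)$ when the $[D]$ parts are split across separate terms. Taken individually, $\MA_{\mathrm{v}}^{\Ric_{\mathrm{v}}(\om_\f)}$ and $\MA_{\mathrm{v}}^{\frac12\ddc\log|s_D|^2}$ involve wedging the singular current $[D]$ with $\om_\f^{n-1}$, and that product is ill-defined because $\om_\f$ blows up along $D$. To avoid this, I would never separate them. Instead I would keep the combination $\frac12\ddc\log\bigl(\MA_{\mathrm{v}}(\f)/(e^{-f}\om^n)\bigr)$ from the proof of Lemma~\ref{lem:variation-entropy-poincare} intact, and use Corollary~\ref{decomp} to write it as $-\theta_\f-\theta'_\f+(\text{smooth})$ on $X\setminus D$. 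This expression is bounded by $\om_{\rm mod}$, so all integrals can be taken over $X\setminus D$, where everything is smooth.

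The integration by parts in Lemma~\ref{lem:symmetry_E1_general} against $\psi$ then has to be justified on $X\setminus D$. I would use cutoffs $\chi_\epsilon$ of $\log(-\log|s_D|)$ type, whose gradients have bounded $\om_{\rm mod}$-norm, and show the boundary terms vanish by the finite-volume estimate of Lemma~\ref{lem:poincare-finite-volume}.
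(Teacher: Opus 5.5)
Your proposal follows the paper's route: on $\tmom^T$ you differentiate the decomposition $\ent_{\mathrm{v}}^{\log|s_D|_h^2}+\enE_{\mathrm{v}}^{-\Ric(\om)+\beta}+\enE_{\mathrm{v}\,\mathrm{w}}$ term by term, using the energy-differential lemma and Lemma~\ref{lem:variation-entropy-poincare}, and then recombine via linearity of $\chi\mapsto\MA_{\mathrm{v}}^\chi(\f)$ and Lelong--Poincar\'e, which is exactly the paper's ``direct consequence'' argument. The one step the paper makes explicit that you only cite from the Remark is the identification with the greatest lsc extension: the paper compares with the $(f,\hat f)$ decomposition behind Corollary~\ref{prop:extention-K-energy-divisor} (needed because $|s_D|_h^{-2}\notin L^1(X,\om^n)$) and notes that the difference $\sum_j\int_X\log(\lambda_j-\log|s_{D_j}|_{h_j}^2)\,\MA_{\mathrm{v}}(\f)$ is finite by Lemma~\ref{lem:poincare-E1}, while your care about never separating the $[D]$ contributions is extra rigor the paper does not supply.
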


\begin{proof}	
   This is a direct consequence of the following decomposition,
    \[
    \cM_{\mathrm{v},\mathrm{w}}^{2\pi[D]}(\f) = \ent_{\mathrm{v}}^{\log|s_D|_h^2}(\f) + \enR_{\mathrm{v}}^{\ddc\log h}(\f) + \enE_{\mathrm{v}\,\mathrm{w}}(\f).
    \]
	    This decomposition defines the same functional as the one defined using the greatest lower semicontinuous extension in Proposition~\ref{prop:extension-mabuchi}. Specifically, we can write
    \[
    \cM_{\mathrm{v},\mathrm{w}}^{2\pi[D]}(\f) = \ent_{\mathrm{v}}^{f}(\f) + \enR_{\mathrm{v}}^{\ddc \hat{f}}(\f) + \enE_{\mathrm{v}\,\mathrm{w}}(\f),
    \]
    where
	\begin{align}
		f &= \sum_{j=1}^N  \left(\log|s_{D_j}|_{h_j}^2 + 2\log(\lambda_j-\log|s_{D_j}|_{h_j}^2)\right),\\
	\hat{f} &= -\sum_{j=1}^N  \log(\lambda_j-\log|s_{D_j}|_{h_j}^2).
	\end{align}
    This equality holds because the difference term $\sum_{j=1}^N \int_X \log(\lambda_j-\log|s_{D_j}|_{h_j}^2) \MA_{\mathrm{v}}(\f)$ is absolutely integrable by Lemma \ref{lem:poincare-E1}.

\end{proof}

\begin{remark}
    This formula agrees, up to a constant, with the one defined in \cite{Auvray}.
\end{remark}

\subsection{Curvature decomposition of Poincar\'e type metrics}\label{sec:poincare-curvature}

In this appendix, we establish the curvature decomposition formula for Poincar\'e type metrics that is used throughout the paper. The main result (Corollary~\ref{decomp}) shows that the Ricci curvature of a Poincar\'e type metric decomposes into a smooth part, a logarithmic part, and the divisor $[D]$. The key technical tool is a variant of the Poincar\'e-Lelong formula (Proposition~\ref{PoincareLelong}) adapted to functions with specific asymptotic behavior near the divisor, which we establish using a variant of Cauchy's integral formula.

	\begin{lemma}[A Variant of Cauchy's Formula]\label{cauchy}
		Let \( f \) be a smooth function defined on a neighborhood of the closed disc \( \overline{\mathbb{D}}_\varepsilon \subset \mathbb{C}\), and let \( \partial \overline{\mathbb{D}}_\varepsilon \) denote the positively oriented boundary. Suppose \( g \) is a smooth and bounded function defined outside the origin satisfying
		\[
		\lim_{z \to 0} \left|z \frac{\partial g}{g\partial z}\right| = 0.
		\]
		Then the Cauchy integral formula is given by:
		\[
		0 = \frac{1}{2\pi i} \lim_{\varepsilon \to 0} \int_{\partial \mathbb{D}_\varepsilon} f \, d \log|g|^2.
		\]
	\end{lemma}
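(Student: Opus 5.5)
The plan is to reduce the limit to a one-dimensional estimate on the circle $\partial\mathbb{D}_\varepsilon$ and show that the integrand is $o(1)$ uniformly, while the length of the circle is $O(\varepsilon)$. First I rewrite $d\log|g|^2$ in complex coordinates. Wherever $g\neq 0$ near the origin, $\log|g|^2=\log g+\log\bar g$ locally, so
\[
d\log|g|^2=\frac{\partial g}{g}+\frac{\overline{\partial g}}{\bar g}
=\frac{\partial_z g}{g}\,dz+\overline{\Bigl(\frac{\partial_z g}{g}\Bigr)}\,d\bar z
\]
when $g$ is holomorphic-like. In general $g$ is only smooth, and then $d\log|g|^2 = 2\,\mathrm{Re}\bigl(\frac{\partial_z g}{g}dz\bigr)$ plus the analogous $\partial_{\bar z}$ term. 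In that case I will interpret the hypothesis as a bound on the full logarithmic differential, namely $|z|\,|d\log|g||\to 0$. Since $\log|g|^2$ is real, its $dz$ and $d\bar z$ coefficients are conjugate, and controlling $z\,\partial_z\log|g|^2$ is exactly what the hypothesis gives if we read $\frac{\partial g}{g\partial z}$ as $\partial_z\log|g|^2$. I will adopt that reading.

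Next, parametrise the circle by $z=\varepsilon e^{i\theta}$, so that $dz=iz\,d\theta$ and $d\bar z=-i\bar z\,d\theta$. Writing $h:=\partial_z\log|g|^2$, the pullback of $d\log|g|^2$ is $\bigl(i z h - i\bar z\bar h\bigr)d\theta = -2\,\mathrm{Im}(zh)\,d\theta$. This gives
\[
\Bigl|\int_{\partial\mathbb{D}_\varepsilon} f\,d\log|g|^2\Bigr|\le 2\sup_{\overline{\mathbb{D}}_{\varepsilon_0}}|f|\cdot 2\pi\cdot\sup_{|z|=\varepsilon}|z h(z)|.
\]
Note that the circle length $2\pi\varepsilon$ has already been absorbed, because the factor $z$ sits inside $zh$. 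Since $f$ is smooth, hence bounded on a neighbourhood of $\overline{\mathbb{D}}_{\varepsilon_0}$, and $\sup_{|z|=\varepsilon}|zh|\to0$ by hypothesis, the right-hand side tends to $0$. Dividing by $2\pi i$ gives the claim.

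The main obstacle is the interpretation of the hypothesis rather than the estimate itself. If $g$ has zeros accumulating at $0$, then $\log|g|^2$ is not smooth on small circles. I would therefore use the boundedness and the smoothness-outside-the-origin hypothesis, plus the ratio condition, which forces $g\neq0$ near $0$ (otherwise $\partial g/g$ is undefined), so that $\log|g|^2$ is smooth on a punctured disc. Note that boundedness of $g$ is otherwise unnecessary.

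This is also the point to record why this is the right variant. For $g=z$ one has $zh=1$ and the limit equals $f(0)$ times a nonzero constant, which recovers Poincaré–Lelong. For $g=\log|z|^2$-type factors, such as $\lambda-\log|s|^2$, one has $zh=O(1/\log|z|)\to0$, and the boundary term disappears. This is exactly the situation needed later for Proposition~\ref{PoincareLelong} and Corollary~\ref{decomp}.
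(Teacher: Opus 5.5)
Your proof is correct and is essentially the paper's argument. The paper also writes $d\log|g|^2$ as $\frac{z\,\partial|g|}{|g|\,\partial z}\frac{dz}{z}$ plus its conjugate, so it too reads the hypothesis as a bound on $z\,\partial_z\log|g|$, as you do, and it bounds the circle integral by $\sup|f|$ times an $o(1)$ factor. Your bookkeeping, which absorbs the length $2\pi\varepsilon$ into the factor $z$, is more accurate than the paper's stated bound $C\cdot 2\pi\varepsilon$.

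Your closing aside about $g=z$ is wrong, although the proof does not use it. Your own formula gives the pullback $-2\,\mathrm{Im}(zh)\,d\theta=-2\,\mathrm{Im}(1)\,d\theta=0$, since $\log|z|^2$ is constant on circles. So the boundary integral vanishes identically, and the Poincar\'e--Lelong mass $f(0)$ only appears with $d^c\log|z|^2$ in place of $d\log|z|^2$.
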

	
	\begin{proof}
		First, observe that

		\[
		\int_{\partial \mathbb{D}_\varepsilon} f \, d \log|g|^2 = \int_{\partial \mathbb{D}_\varepsilon} f \left(\frac{z\partial |g|}{|g|\partial z} \frac{dz}{z} + \frac{\bar z\partial |g|}{|g|\partial \bar{z}}\frac{d\bar{z}}{\bar z}\right).
		\]
		Since $|z| = \varepsilon$ on $\partial \mathbb{D}_\varepsilon$ and using the condition that $\lim_{z \to 0} \left|z \frac{\partial g}{g\partial z}\right| = 0$, we have
		\[
		\left|\int_{\partial \mathbb{D}_\varepsilon} f \, d \log|g|^2\right| \leq C \cdot 2\pi\varepsilon \to 0
		\]
		as $\varepsilon \to 0$, where $C = \sup_{\partial \mathbb{D}_\varepsilon} |f|$.
		This completes the proof.
	\end{proof}

The following \emph{Variant of Poincar\'e-Lelong Formula} will provide a current-based description of the Ricci curvature of a Poincar\'e type metric. This formula decomposes the Ricci curvature into a smooth part and a divisorial part. As we have not found a proof of this formula in the literature, we establish it using the above \emph{Variant of Cauchy's Integral Formula}.

\begin{prop}[A Variant of the Poincar\'e-Lelong Formula]\label{PoincareLelong}
Let \((X, \omega)\) be a K\"ahler manifold. Let \(g\) be a smooth function on \(X \setminus D\). For any holomorphic chart \(U\) near the divisor \(D\) where \(D_{\text{smooth}} \cap U = \{z_1 = 0\}\), assume the following conditions hold:
\[
\lim_{z_1 \to 0} \left|z_1 \frac{\partial g}{g\partial z_1}\right| = 0.
\]
Let \(N_\varepsilon\) denote an \(\varepsilon\)-neighborhood of \(D\) with respect to the metric \(\omega\). Then, the following holds:
\[
\frac{1}{2\pi i} \lim_{\varepsilon \to 0} \int_{\partial N_\varepsilon} d \log |g|^2 \wedge \Lambda = 0,
\]
where \(\Lambda\) is a (possibly continuous) \((n-1, n-1)\)-form.

Equivalently, by Stokes' theorem, we have:
\[
\frac{1}{2\pi i} \lim_{\varepsilon \to 0} \int_{N_\varepsilon} \ddc \log |g|^2 \wedge \Lambda = 0,
\]
and in particular, \(\ddc \log |g|^2\) is \(L^1_{\text{loc}}\).
\end{prop}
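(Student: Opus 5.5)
The plan is to localize the boundary integral near $D$, reduce it to a one-variable circle integral in the normal direction, and then apply Lemma~\ref{cauchy} fiberwise. Afterwards I will deduce the current statement from Stokes' theorem.

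\textbf{Localization and slicing.} Cover a neighborhood of $D$ by finitely many holomorphic charts $U_\alpha$ and take a subordinate partition of unity $\rho_\alpha$. It then suffices to prove
\[
\lim_{\varepsilon\to0}\int_{\partial N_\varepsilon}\rho_\alpha\, d\log|g|^2\wedge\Lambda = 0
\]
for each $\alpha$. Near points where $D$ is smooth, write $D\cap U=\{z_1=0\}$. Up to a diffeomorphism of the collar, which is comparable to the identity because $\omega$ is quasi-isometric to the Euclidean metric, replace $\partial N_\varepsilon$ by the tube $\{|z_1|=\varepsilon\}\times U'$ with $z'=(z_2,\dots,z_n)$.

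On this tube, decompose
\[
d\log|g|^2=\frac{\partial\log|g|^2}{\partial z_1}dz_1+\frac{\partial\log|g|^2}{\partial \bar z_1}d\bar z_1+d'\log|g|^2 .
\]
Only the components of $\Lambda$ containing $dz'\wedge d\bar z'$ to full degree pair with the $dz_1,d\bar z_1$ terms. The restriction $dz_1\wedge d\bar z_1=0$ on the circle kills the others. The tangential term $d'\log|g|^2$ pairs with a component of $\Lambda$ carrying one $dz_1$ or $d\bar z_1$ factor, which has length $O(\varepsilon)$ on the circle. By Fubini, the integral becomes
\[
\int_{U'}\Big(\int_{|z_1|=\varepsilon}h\,d\log|g|^2\Big)\,dV(z') + O(\varepsilon)\cdot\sup|\Lambda|\cdot\sup|d'\log|g|^2| ,
\]
where $h=\rho_\alpha\Lambda_{1\bar1}$ is bounded and continuous.

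\textbf{Inner integral.} Apply Lemma~\ref{cauchy} for each fixed $z'$. Its hypothesis $|z_1\partial_{z_1}g/g|\to0$ gives, exactly as in that proof, the bound $|\int_{|z_1|=\varepsilon}h\,d\log|g|^2|\le o(1)\cdot\sup|h|$. To integrate in $z'$ and pass to the limit by dominated convergence, I need the limit in the hypothesis to hold locally uniformly in $z'$. I will interpret the assumption this way, since near a compact piece of $D$ it is automatic in the Poincar\'e-type situation where $g$ is a power of $\log|s_D|^2$.

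\textbf{Main obstacle.} The hard part is the tangential term and the singular locus of $D$. For the tangential term, I need $\sup|d'\log|g|^2|$ to grow slower than $\varepsilon^{-1}$. I will first try to show that $\varepsilon\,|d'\log|g|^2|\to0$ follows from the same asymptotics (logarithmic growth), and otherwise add it as an explicit mild assumption. At normal-crossing points, the set $\partial N_\varepsilon$ near $\{z_1z_2=0\}$ splits into the pieces $|z_1|=\varepsilon$ and $|z_2|=\varepsilon$, and the same slicing applies to each. The corner region has measure $O(\varepsilon^2)$ times bounded integrands, so it vanishes in the limit.

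\textbf{Equivalent form.} Finally, Stokes' theorem on $X\setminus N_\varepsilon$, applied with $\Lambda$ closed or after moving $d\Lambda$ to a term that converges absolutely, turns the boundary statement into the one for $\int_{N_\varepsilon}\ddc\log|g|^2\wedge\Lambda$. Taking $\Lambda$ to be a positive test form, one sees that the mass of $\ddc\log|g|^2$ on $N_\varepsilon$ tends to $0$. Hence there is no divisorial part, and $\ddc\log|g|^2$ is $L^1_{loc}$.
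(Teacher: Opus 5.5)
\textbf{The core argument matches the paper, but two steps need repair.}

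Your skeleton is the paper's. You localize near $D_{\rm smooth}$, slice transversally, use Fubini and the boundedness of $\Lambda$ to reduce to circle integrals, apply Lemma~\ref{cauchy}, and cover $N_\varepsilon$ by finitely many charts. The paper handles $D_{\rm sing}$ with a codimension-two result from Demailly's book rather than a corner estimate.

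\textbf{The tangential term.} The paper's displayed identity keeps only the $dz_1,d\bar z_1$ part of $d\log|g|^2$, so it silently drops the tangential term you isolated. Your first plan for that term, deriving $\varepsilon\sup|d'\log|g|^2|\to0$ from the hypothesis, cannot work. The hypothesis controls only $z_1\partial_{z_1}\log|g|^2$. That gives the size bound $|\log|g|^2|\le C+o(\log(1/|z_1|))$, but says nothing about $d'\log|g|^2$.

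Here is a counterexample. Work in a flat chart with $D=\{z_1=0\}$. Let $\chi_k:=\eta\bigl(\log(1/|z_1|)/L_k-1\bigr)$ with $\eta\in C^\infty_c((0,1))$ equal to $1$ near $\tfrac12$, $L_{k+1}>2L_k$, $N_k:=e^{3L_k}$, and
\[
\log|g|^2=\sum_k \tfrac1k\,\chi_k\sin(N_k\,\mathrm{Re}\,z_2).
\]
Then $|z_1\partial_{z_1}\log|g|^2|\lesssim 1/(kL_k)\to0$. But at $\varepsilon_k=e^{-3L_k/2}$ you get $\varepsilon_k\sup|d'\log|g|^2|\gtrsim k^{-1}e^{3L_k/2}\to\infty$. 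Now pair with the continuous form $\Lambda=\lambda\,dz_1\wedge d\bar z_2$, where $\lambda=\kappa(z_2)\frac{\bar z_1}{|z_1|}\sum_k\frac1k\chi_k\cos(N_k\,\mathrm{Re}\,z_2)$ and $\kappa$ is a cutoff. The boundary integral at $\varepsilon_k$ then has size of order $k^{-2}e^{3L_k/2}$. So for merely continuous $\Lambda$ the conclusion genuinely fails.

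You therefore need your fallback, in one of two forms:
\begin{itemize}
\item assume $\varepsilon\sup_{|z_1|=\varepsilon}|d'\log|g|^2|\to0$, which is what the Poincar\'e-type derivative bounds supply in Corollary~\ref{decomp}; or
\item take $\Lambda$ of class $C^1$, which is all the current statement needs, and integrate by parts in $z'$ on each slice.
\end{itemize}
The integration by parts moves $d'$ onto $\rho_\alpha\Lambda$. It bounds the tangential term by $C\varepsilon\|\Lambda\|_{C^1}\sup_{|z_1|=\varepsilon}|\log|g|^2|=O(\varepsilon\log(1/\varepsilon))$.

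\textbf{The final step.} For Stokes to produce $\ddc\log|g|^2$, the boundary terms must be $\dc\log|g|^2\wedge\Lambda$ and $\log|g|^2\,\dc\Lambda$, not $d\log|g|^2\wedge\Lambda$. The second term is $O(\varepsilon\log(1/\varepsilon))$, and your estimates apply verbatim to the first.

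What this yields is that the distribution $\ddc\log|g|^2$ equals the principal value $\lim_{\varepsilon\to0}\int_{X\setminus N_\varepsilon}\ddc\log|g|^2\wedge\Lambda$. In other words, there is no term supported on $D$, such as the $2\pi[D]$ that $\log|z_1|^2$ produces. It does not yield $L^1_{\rm loc}$. The form $\ddc\log|g|^2$ is not positive, so testing against positive $\Lambda$ bounds no mass. A first-order hypothesis also gives no control of second derivatives.

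For instance, $\log|g|^2=\sum_k k^{-2}\eta(\log(1/|z_1|)-k)\sin(k\arg z_1)$ satisfies the hypothesis, with $|z_1\partial_{z_1}\log|g|^2|\lesssim 1/k$ on the $k$-th annulus. Yet each annulus contributes a fixed positive amount to $\int|\Delta_{z_1}\log|g|^2|\,dA$, so this integral is infinite near $D$.

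Integrability must come from extra second-order information. An example is a two-sided bound by $\omega_{\rm mod}$, whose coefficients are integrable; the Poincar\'e-type derivative bounds do provide this in the setting of Corollary~\ref{decomp}. The paper's proof does not address this point either.
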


\begin{proof}
First, since \( D_{\mathrm{singular}} \) has complex codimension at least two in \( X \), by \cite[Chapter III, Corollary 2.11]{Demailly}, the current \( \ddc \log |g|^2 \) defines the same current when restricted to \( X \setminus D_{\mathrm{singular}} \).

Observe that
\[
d \log |g|^2 \wedge \Lambda = \left( \frac{\partial |g|}{|g| \partial z_1} dz_1 + \frac{\partial |g|}{|g| \partial \bar{z}_1} d\bar{z}_1 \right) \wedge \Lambda.
\]
Consider an open subset of $X$ parameterized by \(\mathbb{D}_\varepsilon \times N\), where \(N\) is an open subset of \(D\).

By Fubini's theorem and \(\Lambda\) is a continuous form hence \(|\Lambda|_{\omega} = O(1)\), we have
\[
\int_{\partial \mathbb{D}_\varepsilon \times N} \left( \frac{\partial |g|}{|g| \partial z_1} dz_1 + \frac{\partial |g|}{|g| \partial \bar{z}_1} d\bar{z}_1 \right) \wedge \Lambda = O\left( \int_{\partial \mathbb{D}_\varepsilon} \left( \frac{\partial |g|}{|g| \partial z_1} dz_1 + \frac{\partial |g|}{|g| \partial \bar{z}_1} d\bar{z}_1 \right) \right).
\]
By Lemma~\ref{cauchy}, we have
\[
\lim_{\varepsilon \to 0} \int_{\partial \mathbb{D}_\varepsilon \times N} \left( \frac{\partial |g|}{|g| \partial z_1} dz_1 + \frac{\partial |g|}{|g| \partial \bar{z}_1} d\bar{z}_1 \right) \wedge \Lambda = 0.
\]
Since \(N_\varepsilon\) can be covered by finitely many such neighborhoods, the result follows.
\end{proof}

		As a corollary of this theorem, we establish the following decomposition formula for the Ricci curvature of a Poincar\'e type metric. Furthermore, in subsequent discussions, $\theta_\varphi$ will consistently denote the part of the Ricci curvature that is smooth on $X\setminus D$ and controlled near $D$, as defined in the following corollary.
	
		\begin{cor}[Decomposition of the Ricci curvature]\label{decomp}
			The Ricci curvature of a Poincar\'e-type metric as a current decomposes (not uniquely) into three components:
		\[
		\Ric(\omega_\varphi) = \theta_\varphi +\theta'_\f+ 2\pi[D] ,
		\]
		where:
		\begin{itemize}
			\item \(\theta_\varphi=\ddc \rho\) where $\rho$ is a bounded function smooth on $X\setminus D$,
				\item \(\theta'_\f=\ddc \log|\log|s_D|_h||^2\) is an $L^1_{\rm loc}$-form on $X$, smooth on $X\setminus D$, and satisfies \(-C\om_{\rm mod}\leq\theta'_\f\leq C \om_{\rm mod}\) on $X\setminus D$ for some constant $C>0$,
			\item \(2\pi[D]\) is the current of integration along the divisor \(D\), representing the singular contribution of the Ricci curvature concentrated on \(D\).
		\end{itemize}

\end{cor}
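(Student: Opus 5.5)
The proof will proceed by writing the Poincar\'e-type volume form $\omega_\varphi^n$ as a product of a bounded smooth positive factor and an explicit model density, and then applying $-\frac12\ddc\log$ to each factor in the sense of currents on $X$. Near a point of $D$, in a chart where $D=\{z_1\cdots z_k=0\}$, I set
\[
\omega_\varphi^n=\frac{e^{\rho}}{|s_D|_h^2\,\bigl(\log|s_D|_h^2\bigr)^2}\,\omega^n .
\]
This defines a function $\rho$ on $X\setminus D$. For the SNC case I will read the last factor as $\prod_j|s_{D_j}|^2(\log|s_{D_j}|^2)^2$. Then
\[
-\tfrac12\ddc\log\omega_\varphi^n=\Ric(\omega)-\tfrac12\ddc\rho+\tfrac12\ddc\log|s_D|_h^2+\tfrac12\ddc\log(\log|s_D|_h^2)^2 .
\]
Here $\Ric(\omega)$ is smooth. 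By Lelong--Poincar\'e, $\tfrac12\ddc\log|s_D|_h^2=2\pi[D]-\sum_j\theta_j$ with $\theta_j$ smooth. I then define $\theta_\varphi:=\Ric(\omega)-\sum_j\theta_j-\tfrac12\ddc\rho$ (the smooth terms can be absorbed into $\ddc$ of a bounded function locally, or simply kept as smooth; this is harmless since the decomposition is not unique). I also define $\theta'_\varphi$ as the $\log|\log|s_D|_h||^2$ term, matching the statement up to sign and normalization conventions.

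\textbf{Step 1 (boundedness and smoothness of $\rho$).} By the quasi-isometry $C^{-1}\omega_{\rm mod}\le\omega_\varphi\le C\omega_{\rm mod}$, the ratio $\omega_\varphi^n/\omega_{\rm mod}^n$ is bounded above and below. A direct computation with the model metric shows that $\omega_{\rm mod}^n$ is comparable to the model density, as in \eqref{eq:MA-asymptotic}. Hence $\rho$ is bounded. Smoothness on $X\setminus D$ is clear.

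\textbf{Step 2 (current identities across $D$).} This is the main point: the smooth identity above must hold as currents on all of $X$, with no hidden mass on $D$. For the $\rho$ and $\log(\log|s_D|^2)^2$ terms I will apply Proposition~\ref{PoincareLelong} with $g=e^{\rho/2}$ and $g=\log|s_D|_h^2$ respectively. This requires checking $|z_1\partial_{z_1}g/g|\to0$.

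For $g=\log|z_1|^2+O(1)$ this is immediate: $z_1\partial_{z_1}g/g\sim 1/\log|z_1|^2\to0$.

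For $g=e^{\rho/2}$ the condition is $z_1\partial_{z_1}\rho\to0$. Since $\rho=\log(\omega_\varphi^n/\omega_{\rm mod}^n)+$ (smooth), this follows from the bounds on $|\nabla^j_{\omega_{\rm mod}}\varphi|$. These bounds give $|d\rho|_{\omega_{\rm mod}}=O(1)$, and $|z_1\partial_{z_1}|_{\omega_{\rm mod}}\sim 1/|\log|z_1||$, so $|z_1\partial_{z_1}\rho|=O(1/|\log|z_1||)\to0$. In this step I use only boundedly many derivatives of $\varphi$, consistent with the remark on four derivatives.

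Proposition~\ref{PoincareLelong} then shows that the boundary terms in Stokes' formula vanish. So the distributional $\ddc$ of these functions equals their pointwise $\ddc$ on $X\setminus D$, extended by zero, and is $L^1_{\rm loc}$. The $\log|s_D|^2$ term alone produces $2\pi[D]$. Codimension-two strata are handled via \cite{Demailly}, as in that proposition.

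\textbf{Step 3 (bound on $\theta'_\varphi$).} I compute
\[
\ddc\log\bigl(-\log|s|^2\bigr)^2=2\,\frac{\ddc(-\log|s|^2)}{-\log|s|^2}-2\,\frac{d\log|s|^2\wedge d^c\log|s|^2}{(\log|s|^2)^2}.
\]
Away from $D$ the first term equals $2\theta/\log|s|^2$, which is $O(\omega)\le C\omega_{\rm mod}$. The second term is exactly the Poincar\'e part of $\omega_{\rm mod}$, hence bounded by $C\omega_{\rm mod}$. Together these give $-C\omega_{\rm mod}\le\theta'_\varphi\le C\omega_{\rm mod}$.

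I expect Step 2 to be the main obstacle, specifically verifying the hypothesis of Proposition~\ref{PoincareLelong} for $\rho$ near crossings of several components, where the derivative control must be transported through the model metric.
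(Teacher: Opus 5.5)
Your proposal is correct and follows essentially the paper's route. You write $\omega_\varphi^n$ as a bounded positive factor times the model density $|s_D|_h^{-2}(\log|s_D|_h^2)^{-2}\omega^n$, obtain $2\pi[D]$ from Lelong--Poincar\'e, and apply Proposition~\ref{PoincareLelong} to the bounded factor and to the $\log|\log|s_D|_h||^2$ term; the paper checks that hypothesis through derivative bounds on the metric coefficients, which gives the same $O(1/\log|z_1|)$ decay as your $\omega_{\rm mod}$-norm estimate. Like the paper, you then pass across the codimension-two strata with Demailly's result and bound $\theta'_\varphi$ by explicit computation. Keeping $\Ric(\omega)-\sum_j\theta_j$ as a smooth form, rather than forcing it to be $\ddc$ of a bounded function, is the right call (the paper does the same under the name ``bounded terms'', and these need not be globally $\ddc$-exact), and your product reading $\prod_j|s_{D_j}|_{h_j}^2(\log|s_{D_j}|_{h_j}^2)^2$ near crossings is, if anything, more careful than the statement's single-$|s_D|_h$ form.
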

	
\begin{proof}
We establish the decomposition by analyzing the Ricci curvature in local coordinates near the divisor. By definition of Poincar\'e type metrics, there exists a globally bounded function $\rho$ that is smooth on $X\setminus D$ such that

\[
\omega_\varphi^n = \rho \frac{1}{|s_D|_h^2 \log^2|s_D|_h^2} \omega^n.
\]

Taking $dd^c\log$ of both sides formally gives 
\[\Ric(\omega_\varphi) = \ddc\log\rho - \ddc\log\left(|s_D|_h^2\log^2|s_D|_h^2\right) + \text{bounded terms}.\]

\medskip
\noindent\textbf{Local coordinates analysis.} For any holomorphic chart $U$ near the smooth part of the divisor $D$, where $D_{\text{smooth}} \cap U = \{z_1 = 0\}$, the volume form of $\omega_\varphi$ can be locally expressed as:
\[
\omega_\varphi^n = \psi \frac{1}{|z_1|^2 \log^2|z_1|^2} \bigwedge_{j=1}^n (i \, dz_j \wedge d\bar{z}_j),
\]
where $\psi$ is a bounded strictly positive function. This function $\psi$ satisfies the asymptotic condition
\begin{equation}\label{eq:psi-asymptotic}
\lim_{z_1 \to 0} z_1 \frac{\partial |\psi|}{|\psi|\partial z_1} = 0 \quad \text{and} \quad \lim_{z_1 \to 0} z_1 \frac{\partial |\psi|}{|\psi|\partial \bar{z}_1} = 0.
\end{equation}
Consequently, by Proposition~\ref{PoincareLelong}, $dd^c \log|\psi|$ is an $L^1_{\text{loc}}$-form.

Since the fourth-order derivatives of $\varphi$ are bounded under the model metric $\om_{\rm mod}$ according to Definition \ref{def: poincaretypemetdefn}, the partial derivatives of the metric coefficients $g_{j\bar{k}}$ of $\omega_\varphi$ exhibit the following asymptotic behavior:\[
\frac{\partial g_{j\bar{k}}}{\partial z_1} = O\left(\frac{1}{|z_1| \log|z_1|}\right), \quad \frac{\partial g_{j\bar{k}}}{\partial \bar{z}_1} = O\left(\frac{1}{|z_1| \log|z_1|}\right),
\]
while for $i \geq 2$:
\[
\frac{\partial g_{j\bar{k}}}{\partial z_i} = O(1), \quad \frac{\partial g_{j\bar{k}}}{\partial \bar{z}_i} = O(1).
\]
Although $\frac{\partial g_{j\bar{k}}}{\partial \bar{z}_1}$ is unbounded, the condition of Proposition~\ref{PoincareLelong} is still satisfied since
\[
\left|z_1 \frac{\partial g_{j\bar{k}}}{g_{j\bar{k}} \partial z_1}\right| = O\left(\frac{1}{\log|z_1|}\right) \to 0 \quad \text{as } z_1 \to 0.
\]

\noindent This gives the smooth part: $\theta_\varphi = dd^c\log\rho + dd^c\log|\psi|$.

\medskip
\noindent\textbf{Decomposition of the singular term.} The singular components of the Ricci curvature arise from the logarithmic singularity:
\[
\ddc \log\left(|z_1|^2 \log^2|z_1|^2\right) = \underbrace{\ddc \log|z_1|^2}_{2\pi[D]} +\underbrace{ \ddc \log|\log|z_1||^2}_{\theta'_\f}.
\]
Here the first term $\ddc\log|z_1|^2$ represents the current of integration $2\pi[D]$ along the divisor, while the second term $\theta'_\f = \ddc \log|\log|z_1||^2$ is the logarithmic correction. By Proposition~\ref{PoincareLelong}, $\theta'_\f$ is an $L^1_{\text{loc}}$-form. The two-sided estimate \(-C\om_{\rm mod}\leq\theta'_\f\leq C \om_{\rm mod}\) on $X\setminus D$ follows from direct calculation in local coordinates using the explicit formula for $\ddc\log|\log|z_1||^2$.

Since the singular part of $D$ has codimension at least two, the same decomposition extends globally by~\cite[Chapter III, Corollary 2.11]{Demailly}. This completes the proof.

\end{proof}

\section*{Acknowledgments}
The author would like to thank C.-M. Pan for helpful discussions and for bringing the work \cite{BJT} to the author's attention. The author is also grateful to A. Lahdili and S. Jubert for helpful discussions on weighted metrics, and to the supervisors Julien Keller and Hugues Auvray, without whom this work would not have been possible. This work was supported by the FRQNT grant ``M\'etriques k\"ahl\'eriennes sp\'eciales singuli\`eres et non-compactes'' (DOI: 10.69777/343263).

\newcommand{\etalchar}[1]{$^{#1}$}

\end{document}